\newcounter{cprop}[section]
\newtheorem{theorem}[cprop]{Theorem}
\theoremstyle{plain}
\newtheorem{corollary}[cprop]{Corollary}
\newtheorem{lemma}[cprop]{Lemma}
\newtheorem{proposition}[cprop]{Proposition}
\numberwithin{equation}{section}
\theoremstyle{definition}
\newtheorem{definition}[cprop]{Definition}
\theoremstyle{remark}
\newtheorem{remark}[cprop]{Remark}
\newcommand{\E}{\mathbb{E}}
\renewcommand{\P}{\mathbb{P}}
\newcommand{\R}{\mathbb{R}}
\newcommand{\N}{\mathbb{N}}
\newcommand{\Z}{\mathbb{Z}}
\newcommand{\vertiii}[1]{{\left\vert\kern-0.25ex\left\vert\kern-0.25ex\left\vert #1 
    \right\vert\kern-0.25ex\right\vert\kern-0.25ex\right\vert}}
\begin{document}
\title[Dynamical theory for SDDE II]{A dynamical theory for singular stochastic delay differential equations II: Nonlinear equations and invariant manifolds}

\author{M. Ghani Varzaneh}
\address{Mazyar Ghani Varzaneh\\
Institut f\"ur Mathematik, Technische Universit\"at Berlin, Germany and Department of Mathematical Sciences, Sharif University of Technology, Tehran, Iran}
\email{mazyarghani69@gmail.com}

\author{S. Riedel}
\address{Sebastian Riedel \\
Institut f\"ur Mathematik, Technische Universit\"at Berlin, Germany and Weierstra{\ss}-Institut, Berlin, Germany}
\email{riedel@math.tu-berlin.de}
%

\renewcommand{\thefootnote}{\fnsymbol{footnote}} 
\footnotetext{2020 \emph{Mathematics Subject Classification.} 34K19, 34K50, 37D10, 37H15, 60H20, 60L20.}     
\renewcommand{\thefootnote}{\arabic{footnote}} 

\keywords{random dynamical systems, rough paths, stable and unstable manifolds, stochastic delay differential equations}


\begin{abstract}
  Building on results obtained in \cite{GRS19}, we prove \emph{Local Stable and Unstable Manifold Theorems} for nonlinear, singular stochastic delay differential equations. The main tools are rough paths theory and a semi-invertible Multiplicative Ergodic Theorem for cocycles acting on measurable fields of Banach spaces obtained  in \cite{GR19}.
\end{abstract}

\maketitle

\section*{Introduction}

  The following article is a sequel to \cite{GRS19}. Our aim is to study stochastic delay differential equations (SDDEs) of the form
  \begin{align}\label{eqn:sdde_intro}
   dy_t = b(y_t,y_{t-r})\, dt + \sigma(y_t,y_{t-r})\, dB_t(\omega)
  \end{align}
  from a dynamical systems point of view. In \eqref{eqn:sdde_intro}, $r > 0$ denotes a time delay, $B$ is a multidimensional Brownian motion, $b$ is the drift and $\sigma$ the diffusion coefficient. Such equations are called \emph{(single) discrete time delay equations}.\footnote{The results in \cite{GRS19} and in the present article do also apply for vector fields depending on a finite number of time instances in the past, but we restrict ourselves to a single delay for the sake of simplicity.} The goal in the present article is to prove the existence of \emph{random invariant manifolds} for \eqref{eqn:sdde_intro}. Invariant manifolds are key objects in the theory of dynamical systems, both deterministic and random, and play a central role, for instance, in stochastic bifurcation theory \cite{KW83,Arn98,CLR01} and model reduction for stochastic differential equations \cite{DD07,DW14,CLW15a,CLW15b}.
  
  Although the equation \eqref{eqn:sdde_intro} can be easily solved with It\=o's theory of stochastic integration, studying its dynamical properties is a challenging task. In fact, the key object in the theory of random dynamical systems \cite{Arn98} is the \emph{cocyle} which is induced by a stochastic differential equation. However, Mohammed \cite{Moh86} showed that one can not expect that an equation of the form \eqref{eqn:sdde_intro} induces a continuous stochastic flow (cf. also \cite[Theorem 2.1]{MS97} and \cite[Theorem 0.2]{GRS19} for similar results), therefore it was believed that \eqref{eqn:sdde_intro} does, in general, not induce a cocycle. Without going too much into detail here, we want to mention that the source of trouble in \eqref{eqn:sdde_intro} is the diffusion coefficient $\sigma$ which is allowed to depend on the past. Equations where the delay only appears in the drift are easier to handle and their dynamical properties were studied, for instance, in \cite{MS90, MS96, MS97, MS03, MS04}. If the diffusion $\sigma$ is path-dependent in a smooth way, i.e. when
  \begin{align*}
   \sigma(y_t,y_{\cdot}) = \int_{-r}^0 \hat{\sigma}(y_t,y_{t+s})\, \mu(ds)
  \end{align*}
  for a regular measure $\mu$, the situation is also simpler and was considered, in parts, in the above mentioned references. The equation \eqref{eqn:sdde_intro} corresponds to $\mu$ being the (singular) Dirac measure $\delta_{-r}$ which is the reason for calling it a \emph{singular} stochastic delay equation. 
  
  One of our main results in \cite{GRS19} was that \eqref{eqn:sdde_intro} does indeed induce a cocycle. However, one has to pay a price: the spaces on which the cocycle map is defined will depend on the trajectory of the driving path $B(\omega)$. More precisely, if $(\Omega,\mathcal{F},\P,\theta)$ is a random dynamical system (cf. definition below), the cocycle $\varphi$ is a continuous map 
  \begin{align*}
   \varphi(n,\omega,\cdot) \colon E_{\omega} \to E_{\theta^n \omega}
  \end{align*}
  where $\{E_{\omega}\}_{\omega \in \Omega}$ is a familiy of Banach spaces. In the literature, these type of cocycles are not new and were already studied. For instance, they naturally appear when linearizing a stochastic differential equation on a manifold \cite[Section 4.2]{Arn98}. One key idea in \cite{GRS19} was to interprete \eqref{eqn:sdde_intro} as a random rough differential equation in the sense of Lyons \cite{Lyo98, NNT08, FH14}. Doing this, we showed in \cite{GRS19} that Gubinelli's spaces of controlled paths \cite{Gub04} are possible choices for $E_{\omega}$ when studying \eqref{eqn:sdde_intro}.
  
  A major result in smooth ergodic theory is the \emph{Multiplicative Ergodic Theorem} (MET) which provides a spectral theory for linear cocycles. In \cite{GRS19}, we proved that such a theorem holds in our framework. As a consequence, we could show that cocycles induced by linear equations of the form \eqref{eqn:sdde_intro} possess a \emph{Lyapunov spectrum}, an analogue to the set of eigenvalues of a matrix. In \cite{GR19}, we proved in a more abstract framework that an \emph{Oseledets splitting}, i.e. a decomposition of $E_{\omega}$ into a direct sum of $\varphi$-invariant spaces, can also be deduced. This was the basis to prove the existence of local stable and unstable manifolds.
  
  In this article, we harvest the fruit of our former work. In our main results, Theorem \ref{thm:stable_manifold} and Theorem \ref{thm:unstable_manifold}, we formulate sufficient conditions under which we can deduce the existence of local stable and unstable manifolds for equation \eqref{eqn:sdde_intro}. Let us mention that one difficulty in the unstable case is that the cocycle induced by \eqref{eqn:sdde_intro} is not invertible, which is natural for delay equations: solutions exist only forward in time. Therefore, we can not just apply the stable manifold theorem to the inverse cocycle as, for instance, in \cite{MS99}. To overcome this difficulty, we use the semi-invertible MET in \cite{GR19} to obtain the existence of unstable manifolds. Both theorems are formulated in a generality which allows them to be applied to equations which are driven by a much more general noise than Brownian motion, e.g. by semimartingales with stationary increments or by a fractional Brownian motion. 
  
  There are many invariant manifold theorems for stochastic differential equations. In the case of a finite dimensional state space, let us mention \cite{Car85, Box89, Wan95, MS99,KN18}. For infinite dimensional state spaces, invariant manifold theorems were proved by Mohammed and Scheutzow for a class SDDEs in \cite{MS04} and for different classes of stochastic partial differential equations in \cite{DLS03, DLS04, MZZ08, CDLS10, MZ10, GLS10b, CRD15, LNS18, CRD19, Nea19}. 
  
  The structure of the paper is as follows: In Section \ref{sec:sdde_nonlinear}, we study properties of rough delay differential equations. In particular, we prove their differentiability and provide bounds for the derivative. We furthermore study equations with a linear drift term. Section \ref{sec:inv_mfd_rrde} contains our main results. We introduce random fixed points for cocycles (\emph{stationary trajectories}) around which the invariant manifolds exist. The main results are formulated in Theorem \ref{thm:stable_manifold} and Theorem \ref{thm:unstable_manifold}. Subsection \ref{subsec:examles} contains examples of equations for which our theorems apply.

\subsection*{Preliminaries and notation}

In this section we collect some conventions, the notation and basic definitions which will be used throughout the paper. The notation coincides with the one used in \cite{GRS19}. \\ 
\begin{itemize}
 \item \emph{Differentiable} will always mean differentiability in Fr\'echet-sense.
 \item If not stated differently, $U$, $V$, $W$ and $\bar{W}$ will always denote finite-dimensional, normed vector spaces over the real numbers, with norm denoted by $|\cdot |$. The space $ L(U,W) $ consists of all bounded linear functions from $ U $ to $ W $ equipped with usual operator norm.
  \item Let $I$ be an interval in $\R$. A map $ m:I\rightarrow U $ will also be called a \emph{path}. For a path $m$, we denote its increment by $ m_{s,t}=m_{t}-m_{s} $ where by $ m_{t} $ we mean $ m(t) $. We set
\begin{align*}
\Vert m\Vert_{\infty;I}:=\sup_{s\in I}\vert m_{s}\vert
\end{align*}
and define the $ \gamma$-H\"older seminorm, $\gamma \in (0,1]$, by
\begin{align*}
\Vert m\Vert_{\gamma;I} := \sup_{s,t \in I; s \neq t} \frac{\vert m_{s,t}\vert}{\vert t-s\vert^{\gamma}}.
\end{align*}
For a general $2$-parameter function $m^{\#} \colon I \times I \to U$, the same notation is used. We will sometimes omit $I$ as subindex if the domain is clear from the context. 


\item By $C^n_b(W^2, \bar{W})$, we denote the space of bounded functions $\sigma \colon W \oplus W \to \bar{W}$ having $n$ bounded derivatives, $n \geq 0$. Often, we will omit domain and codomain and just write $C^n_b$. We set $\sigma_{x^n,y^m} := \frac{\partial^{n+m}}{\partial x^n \partial y^m} \sigma(x,y)$ for $n,m \geq 0$ and $\sigma_{x} := \sigma_{x^1,y^0}$, $\sigma_{y} := \sigma_{x^0,y^1}$. Dropping the subindex $b$ means dropping the boundedness assumption.

\end{itemize}

Next, we introduce notions from rough paths theory needed in this article. Most of them can be found in \cite{FH14}. We also review some of the concepts from \cite{NNT08} and \cite{GRS19} here.
\begin{itemize}
 \item  Let  $X \colon \R \to U$ be a locally $\gamma$-H\"older path, $\gamma \in (0,1]$. A \emph{L\'evy area} for $X$ is a continuous function
 \begin{align*}
  \mathbb{X} \colon \R \times \R \to U \otimes U
 \end{align*}
 for which the algebraic identity
 \begin{align*}
   \mathbb{X}_{s,t} = \mathbb{X}_{s,u} + \mathbb{X}_{u,t} + X_{s,u} \otimes X_{u,t} 
 \end{align*}
 is true for every $s,u,t \in \R$ and for which $\| \mathbb{X} \|_{2\gamma ; I} < \infty$ holds on every compact interval $I \subset \R$. If $\gamma \in (1/3,1/2]$ and $X$ admits L\'evy area $\mathbb{X}$, we call $ \mathbf{X}= \big{(}X, \mathbb{X}\big{)}$ a \emph{$\gamma$-rough path} and set $\| \mathbf{X}\|_{\gamma ; I} := \|X\|_{\gamma ; I} + \sqrt{\| \mathbb{X} \|_{2\gamma ; I}}$. A \emph{delayed L\'evy area} for $X$ is a continuous function
 \begin{align*}
   \mathbb{X}(-r) \colon \R \times \R \to U \otimes U
 \end{align*}
 for which the algebraic identity
\begin{align*}
   \mathbb{X}_{s,t}(-r)=\mathbb{X}_{s,u}(-r) + \mathbb{X}_{u,t}(-r)+X_{s-r,u-r}\otimes X_{u,t} 
 \end{align*}
 holds for every $s,u,t \in \R$ and for which we have $\| \mathbb{X}(-r) \|_{2\gamma ; I} < \infty$ on every compact interval $I \subset \R$. If $\gamma \in (1/3,1/2]$ and $X$ admits L\'evy- and delayed L\'evy area $\mathbb{X}$ and $\mathbb{X}(-r)$, we call $ \mathbf{X}= \big{(}X, \mathbb{X}, \mathbb{X}(-r)\big{)}$ a \emph{delayed $\gamma$-rough path with delay $r > 0$}. For an interval $[a,b] \subset \R$, we set 
 \begin{align*}
  \| \mathbf{X}\|_{\gamma ; [a,b]} := \|X\|_{\gamma ; [a,b]} + \|X\|_{\gamma ; [a-r,b-r]} + \sqrt{\| \mathbb{X} \|_{2\gamma ; [a,b]}} + \sqrt{\| \mathbb{X}(-r) \|_{2\gamma ; [a,b]}}.
 \end{align*}

 
 \item   Let $I = [a,b]$ be a compact interval. A path $m \colon I \to \bar{W}$ is a \emph{controlled path} based on $X$ on the interval $I$ if there exists a $\gamma$-H\"older path $m' \colon I \to L(U,\bar{W})$ such that
\begin{align*}
m_{s,t} = m'_s X_{s,t} + m_{s,t}^{\#}
\end{align*}
for all $s,t \in I$ where $m^{\#} \colon I \times I \to \bar{W}$ satisfies $\| m^{\#} \|_{2\gamma ; I} < \infty$.
The path $m'$ is called a \emph{Gubinelli derivative} of $m$. We use $\mathscr{D}_{X}^{\gamma}(I,\bar{W})$ to denote the space of controlled paths based on $X$ on the interval $I$. We will sometimes just write $\mathscr{D}_{X}^{\gamma}(I)$ or $\mathscr{D}_{X}^{\gamma}$ if codomain or domain are clear from the context. It can be shown that this space is a Banach space with norm 
\begin{align*}
 \|m\|_{\mathscr{D}_{X}^{\gamma}} := \|(m,m')\|_{\mathscr{D}_{X}^{\gamma}} := |m_a| + |m'_a| + \|m'\|_{\gamma;I} + \| m^{\#} \|_{2\gamma; I}.
 \end{align*}
 If $\alpha \leq \beta \leq \gamma$, the space $\mathscr{D}_{X}^{\alpha,\beta}(I,\bar{W})$ is defined as the closure of  $\mathscr{D}_{X}^{\beta}(I,\bar{W})$ in the space  $\mathscr{D}_{X}^{\alpha}(I,\bar{W})$. It can be shown that $\mathscr{D}_{X}^{\alpha,\beta}(I,\bar{W})$ is separable for $\alpha < \beta$ \cite[Lemma 3.9]{GRS19}. 
 
 A path $m \colon I \to \bar{W}$ is a \emph{delayed controlled path} based on $X$ on the interval $I$ if there exist $\gamma$-H\"older paths $\zeta^0, \zeta^1 \colon I \to L(U,\bar{W})$ such that
\begin{align*}
m_{s,t} = \zeta^0_s X_{s,t} + \zeta^1_s X_{s-r,t-r} + m^{\#}_{s,t}
\end{align*}
for all $s,t \in I$ where $m^{\#} \colon I \times I \to \bar{W}$ satisfies $\| m^{\#} \|_{2\gamma ; I} < \infty$.
We use $\mathcal{D}_{X}^\gamma(I,\bar{W})$ to denote the space of delayed controlled paths based on $X$ on the interval $I$. A norm on this space can be defined by
\begin{align*}
 \| m \|_{\mathcal{D}_{X}^{\gamma}} := \|(m,\zeta^0,\zeta^1)\|_{\mathcal{D}_{X}^{\gamma}} := |m_a| + |\zeta^0_a| + |\zeta^1_a| + \|\zeta^0\|_{\gamma;I} + \|\zeta^1\|_{\gamma;I} + \|m^{\#}\|_{2\gamma; I}.
\end{align*}

\end{itemize}

We recall the concept of a random dynamical system introduced by L.~ Arnold \cite{Arn98}. 

\begin{itemize}
 \item Let $(\Omega,\mathcal{F})$ and $(X,\mathcal{B})$ be measurable spaces. Let $\mathbb{T}$ be either $\R$ or $\Z$, equipped with a $\sigma$-algebra $\mathcal{I}$ given by the Borel $\sigma$-algebra $\mathcal{B}(\R)$ in the case of $\mathbb{T} = \R$ and by $\mathcal{P}(\Z)$ in the case of $\mathbb{T} = \Z$.
A family $\theta=(\theta_t)_{t \in \mathbb{T}}$ of maps from $\Omega$ to itself is called a \emph{measurable dynamical system} if
\begin{itemize}
   \item[(i)] $(\omega,t) \mapsto \theta_t\omega$ is $\mathcal{F} \otimes \mathcal{I} / \mathcal{F}$-measurable,   \vspace{0.07cm}

   \item[(ii)] $\theta_0 = \operatorname{Id}$,   \vspace{0.07cm}
  
   \item[(iii)] $\theta_{s + t} = \theta_s \circ \theta_t$, for all $s,t \in \mathbb{T}$.   \vspace{0.1cm}
\end{itemize}
If $\mathbb{T} = \mathbb{Z}$, we will also use the notation $\theta := \theta_1$, $\theta^n := \theta_n$ and $\theta^{-n} := \theta_{-n}$ for $n \geq 1$.
If $\P$ is furthermore a probability on $(\Omega,\mathcal{F})$ that is invariant under any of the elements of $\theta$,
$$
\P \circ \theta_t^{-1} = \P
$$ 
for every $t \in \mathbb{T}$, we call the tuple $\big(\Omega, \mathcal{F},\P,\theta\big)$ a \emph{measurable metric dynamical system}. The system is called \emph{ergodic} if every $\theta$-invariant set has probability $0$ or $1$.

\item Let $\mathbb{T}^+ := \{t \in \mathbb{T}\, :\, t \geq 0\}$, equipped with the trace $\sigma$-algebra. An  \emph{(ergodic) measurable random dynamical system} on $(X,\mathcal{B})$ is an (ergodic) measurable metric dynamical system $\big(\Omega, \mathcal{F},\P,\theta\big)$ with a measurable map 
   $$
   \varphi \colon \mathbb{T}^+ \times \Omega \times X \to X
   $$ 
   that enjoys the \emph{cocycle property}, i.e. $\varphi(0,\omega,\cdot) = \operatorname{Id}_X$, for all $\omega \in \Omega$, and
  \begin{align*}
   \varphi(t+s,\omega,\cdot) = \varphi(t,\theta_s\omega,\cdot) \circ \varphi(s,\omega,\cdot)
  \end{align*}
  for all $s,t \in \mathbb{T}^+$ and $\omega \in \Omega$. The map $\varphi$ is called \emph{cocycle}. If $X$ is a topological space with $\mathcal{B}$ being the Borel $\sigma$-algebra and the map $\varphi(\cdot, \omega,\cdot) \colon \mathbb{T}^+ \times X \to X$ is continuous for every $\omega \in \Omega$, it is called a \emph{continuous (ergodic) random dynamical system}. In general, we say that \emph{$\varphi$ has property $P$} if and only if $\varphi(t,\omega,\cdot) \colon X \to X$ has property $P$ for every $t \in \mathbb{T}^+$ and $\omega \in \Omega$ whenever the latter statement makes sense.
\end{itemize}

We finally define measurable fields of Banach spaces and cocycles acting on it.

\begin{itemize}
 \item Let $(\Omega,\mathcal{F})$ be a measurable space. A family of Banach spaces $\{E_{\omega}\}_{\omega \in \Omega}$ is called a \emph{measurable field of Banach spaces} if there is a set of sections
 \begin{align*}
  \Delta \subset \prod_{\omega \in \Omega} E_{\omega}
 \end{align*}
 with the following properties:
 \begin{itemize}
  \item[(i)] $\Delta$ is a linear subspace of $\prod_{\omega \in \Omega} E_{\omega}$.
  \item[(ii)] There is a countable subset $\Delta_0 \subset \Delta$ such that for every $\omega \in \Omega$, the set $\{g(\omega)\, :\, g \in \Delta_0\}$ is dense in $E_{\omega}$.
  \item[(iii)] For every $g \in \Delta$, the map $\omega \mapsto \| g(\omega) \|_{E_{\omega}}$ is measurable.
 \end{itemize}

 \item Let $(\Omega,\mathcal{F},\P,\theta)$ be a measurable metric dynamical system and $(\{E_{\omega}\}_{\omega \in \Omega},\Delta)$ a measurable field of Banach spaces. A \emph{continuous cocycle on $\{E_{\omega}\}_{\omega \in \Omega}$} consists of a family of continuous maps
 \begin{align}\label{eqn:def_cocycle}
  \varphi(\omega, \cdot) \colon E_{\omega} \to E_{\theta \omega}.
 \end{align}
 If $\varphi$ is a continuous cocycle, we define $\varphi(n,\omega,\cdot) \colon E_{\omega} \to E_{\theta^n \omega}$ as 
 \begin{align*}
  \varphi(n,\omega,\cdot) := \varphi(\theta^{n-1}\omega,\cdot) \circ \cdots \circ \varphi(\omega,\cdot).
 \end{align*}
 We say that \emph{$\varphi$ acts on $\{E_{\omega}\}_{\omega \in \Omega}$} if the maps
 \begin{align}\label{eqn:measurability_cocycle}
  \omega \mapsto \| \varphi(n,\omega,g(\omega)) \|_{E_{\theta^n \omega}}, \quad n \in \N
 \end{align}
 are measurable for every $g \in \Delta$. In this case, we will speak of a \emph{continuous random dynamical system on a field of Banach spaces}. If the map \eqref{eqn:def_cocycle} is bounded linear/compact/differentiable, we call $\varphi$ a bounded linear/compact/differentiable cocycle.
\end{itemize}


\section{Properties of nonlinear rough delay equations}\label{sec:sdde_nonlinear}

In this section, we study different aspects of nonlinear rough delay differential equations. For simplicity, we will study equations without a drift coefficient first. Fix a delay $r > 0$ and consider
 \begin{align}\label{eqn:rough_delay}
  \begin{split}
  y_t &= \xi_0 + \int_0^t \sigma (y_s, y_{s-r})\, d \mathbf{X}_s; \quad t \in [0,r] \\
  y_t &= \xi_{t}; \quad t \in [-r, 0] 
  \end{split}
\end{align}
where $\mathbf{X} = (X,\mathbb{X},\mathbb{X}(-r))$ is a delayed $\gamma$-rough path, $\gamma \in (1/3,1/2]$, and $X \colon \R \to U$ is locally $\gamma$-H\"older continuous. We recall the following result:
\begin{theorem}\label{thm:delay_existence}
 Assume $\sigma \in C^3_b(W^2,L(U,W))$, $1/3 < \alpha \leq \beta < \gamma \leq 1/2$ and either $\xi \in \mathscr{D}_{X}^{\beta}([-r,0],W)$ or $\xi \in \mathscr{D}_{X}^{\alpha, \beta}([-r,0],W)$. Then the equation \eqref{eqn:rough_delay} has a unique solution $y \in \mathscr{D}_{X}^{\beta}([0,T],W)$ resp. $y \in \mathscr{D}_{X}^{\alpha, \beta}([0,T],W)$ for any $T > 0$. In both cases, $y_t' = \sigma(y_t,y_{t-r})$.
\end{theorem}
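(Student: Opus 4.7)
The plan is to solve the equation by induction on the consecutive intervals $[kr,(k+1)r]$, $k=0,1,\ldots$, reducing in each step to a non-autonomous rough differential equation that can be attacked by a Banach fixed point argument in the space of controlled paths. Everything is essentially a careful bookkeeping of the delayed controlled-path structure introduced in the Preliminaries.

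Step 1 (Fixed point setup on $[0,\tau]$ for small $\tau \le r$). On the first interval the map $s \mapsto \xi_{s-r}$ is the prescribed initial segment, hence known. For $y \in \mathscr{D}_X^{\beta}([0,\tau],W)$ with Gubinelli derivative $y'$, I first verify that $s\mapsto \sigma(y_s,\xi_{s-r})$ is a delayed controlled path with the expansion
\begin{align*}
\sigma(y_t,\xi_{t-r}) - \sigma(y_s,\xi_{s-r}) = \sigma_{x}(y_s,\xi_{s-r})\, y'_s\, X_{s,t} + \sigma_{y}(y_s,\xi_{s-r})\, \xi'_{s-r}\, X_{s-r,t-r} + R^\sigma_{s,t},
\end{align*}
with $\|R^\sigma\|_{2\beta}$ controlled by $\|\sigma\|_{C^2_b}$, $\|y\|_{\mathscr{D}_X^\beta}$ and $\|\xi\|_{\mathscr{D}_X^\beta}$. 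This is precisely the structure needed to make sense of the integral against $\mathbf{X} = (X,\mathbb{X},\mathbb{X}(-r))$: the ordinary L\'evy area absorbs $\sigma_x y'$ and the delayed L\'evy area absorbs $\sigma_y \xi'(\cdot-r)$. Defining
\begin{align*}
\Phi(y)_t := \xi_0 + \int_0^t \sigma(y_s, \xi_{s-r}) \, d\mathbf{X}_s,
\end{align*}
the sewing lemma yields $\Phi(y) \in \mathscr{D}_X^{\beta}([0,\tau],W)$ with Gubinelli derivative $\Phi(y)'_t = \sigma(y_t, \xi_{t-r})$.

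Step 2 (Invariance and contraction). Using the $C^3_b$ regularity of $\sigma$ together with standard estimates on the rough integral, I show that for a suitable ball $B_M$ around the constant controlled path $(\xi_0, \sigma(\xi_0,\xi_{-r}))$ in $\mathscr{D}_X^{\beta}([0,\tau],W)$, the map $\Phi$ satisfies $\Phi(B_M) \subset B_M$ and, for two elements $y,\bar y \in B_M$,
\begin{align*}
\|\Phi(y) - \Phi(\bar y)\|_{\mathscr{D}_X^{\beta}([0,\tau])} \le C(\mathbf{X},\xi,\sigma)\, \tau^{\gamma - \beta}\, \|y - \bar y\|_{\mathscr{D}_X^{\beta}([0,\tau])}.
\end{align*}
Choosing $\tau$ small enough (depending only on $\|\mathbf{X}\|_{\gamma;[-r,r]}$, $\|\xi\|_{\mathscr{D}_X^\beta}$ and $\|\sigma\|_{C^3_b}$) yields a contraction; the Banach fixed point theorem produces a unique solution on $[0,\tau]$.

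Step 3 (Extension to $[0,r]$ and iteration over time). Because the required $\tau$ depends only on an upper bound for $\|y\|_{\mathscr{D}_X^\beta}$ on the previous interval, one can patch the local solutions together: after finitely many steps I obtain a solution on $[0,r]$, and uniqueness globalises by the standard argument. Once $y$ is constructed on $[0,r]$, it plays the role of the new history $\xi$ for the problem posed on $[r,2r]$, and Steps 1--2 apply verbatim with $\xi$ replaced by $y|_{[0,r]}$. Iterating, I reach any $T>0$ in finitely many steps. The identity $y'_t = \sigma(y_t,y_{t-r})$ is built into the fixed point. For the $\mathscr{D}_X^{\alpha,\beta}$ case, one observes that the fixed point iteration preserves the closed subspace $\mathscr{D}_X^{\alpha,\beta}$ provided the initial segment $\xi$ lies there, which follows from continuity of $\Phi$ in the weaker $\mathscr{D}_X^{\alpha}$-norm combined with the density definition of $\mathscr{D}_X^{\alpha,\beta}$.

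The main obstacle is Step 1, namely keeping track of the delayed controlled-path structure through the composition with $\sigma$ so that $\sigma_y$-terms are assigned to the delayed L\'evy area $\mathbb{X}(-r)$ and not to the undelayed one. Once this decomposition is correctly identified, the remaining estimates are parallel to the standard theory of rough differential equations, but the need to simultaneously control $\|\cdot\|_{\gamma;[a,b]}$ and $\|\cdot\|_{\gamma;[a-r,b-r]}$ of $X$ (and both L\'evy areas) forces the contraction constants to be formulated in terms of the full norm $\|\mathbf{X}\|_{\gamma;[a-r,b]}$ introduced in the preliminaries.
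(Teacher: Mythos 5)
Your argument is essentially the proof the paper relies on: the paper itself only cites \cite[Theorems 1.8 and 1.9]{GRS19}, and the argument there (and mirrored in the proof of Theorem \ref{thm:reg_sol_map} here) is exactly your scheme — make $s\mapsto\sigma(y_s,\xi_{s-r})$ a delayed controlled path, integrate via the sewing lemma against $(X,\mathbb{X},\mathbb{X}(-r))$, run a contraction with the smallness factor $\tau^{\gamma-\beta}$, patch up to $[0,r]$, and iterate by the method of steps, with the $\mathscr{D}_X^{\alpha,\beta}$ case obtained from continuity of the solution map in the $\alpha$-norm plus density. So your proposal is correct and takes essentially the same route.
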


\begin{proof}
 The case  $\xi \in \mathscr{D}_{X}^{\beta}([-r,0],W)$ was shown in \cite[Theorem 1.8]{GRS19} and the case $\xi \in \mathscr{D}_{X}^{\alpha,\beta}([-r,0],W)$ follows from continuity of the solution map, cf. \cite[Theorem 1.9]{GRS19}.
\end{proof}

\subsection{Regularity}

In this subsection, we will study the regularity of the solution map induced by \eqref{eqn:rough_delay}.
More precisely, we will give sufficient conditions under which this map is differentiable in the initial condition, which means differentiability in Fr\'echet-sense on the space of controlled paths. To prove our result, we will follow a similar strategy as in \cite{Bai15b} and \cite{CL18}.\\ 

\begin{definition}
 For $ m\in\mathbb{N} $ and $0 <\kappa \leqslant1$, we say that $ f \colon V^2 \to W$ belongs to $ \mathscr{C}^{m+\kappa}(V^{2},W) $ if its derivatives up to order $ m $ are bounded and continuous and if $ D^{m}f $ is $\kappa$- H\"older continuous. The space is equipped by the norm
\begin{align*}
\Vert f\Vert _{\mathscr{C}^{m+\kappa}}=\max_{j=0,...,m}\lbrace \Vert D^{j}f\Vert_{\infty},\Vert D^{m}f\Vert _{\kappa}\rbrace.
\end{align*}
\end{definition}
Next, we give a more general definition of a delayed controlled path.

\begin{definition}
Let $I = [a,b]$. We say that $m \colon I \rightarrow W $ is a \emph{delayed $ (\alpha,\beta,\theta)$-controlled path based on $X$ on the interval $I$} if there exist paths $\zeta^0, \zeta^1 \colon I \to L(U,\bar{W})$ such that
\begin{align*}
m_{s,t} = \zeta^0_s X_{s,t} + \zeta^1_s X_{s-r,t-r} + m^{\#}_{s,t}
\end{align*}
holds for all $s,t \in I$ where
\begin{align*}
\| m \|_{\alpha ; I} , \| \zeta^{0} \|_{\beta; I},  \| \zeta^{1} \|_{\beta; I}\ \text{and} \ \| m^{\#} \|_{\theta ; I} < \infty.
\end{align*}
We denote the corresponding space by $ \mathcal{D}_X^{\alpha, \beta, \theta}(I,\bar{W}) $ where the norm on this space is defined as
\begin{align}\label{eqn:norm_delayed_controlled_path}
 \| m \|_{\mathcal{D}_{X}^{\gamma}} := \|(m,\zeta^0,\zeta^1)\|_{\mathcal{D}_{X}^{\gamma}} := |m_a| + |\zeta^0_a| + |\zeta^1_a| + \| m \|_{\alpha;I} + \|\zeta^0\|_{\beta;I} + \|\zeta^1\|_{\beta;I} + \|m^{\#}\|_{\theta; I}.
\end{align}
\end{definition}

\begin{remark}\label{sewing 2}
  Clearly, $ \mathcal{D}_X^{\beta, \beta, 2 \beta}(I,\bar{W})  = \mathcal{D}_X^{\beta}(I,\bar{W}) $. Using the sewing lemma \cite[Lemma 4.2]{FH14}, it is easy to check that we can define an integral of the form
  \begin{align*}
   \int m\, d\mathbf{X}
  \end{align*}
  as in \cite[Theorem 1.5]{GRS19} for delayed $\gamma$-rough paths $\mathbf{X}$ and delayed $(\alpha,\beta,\theta)$-controlled paths $m$ provided $ \theta+\gamma>1 $ and $ \beta+2\gamma>1 $. Furthermore, the (linear) map
\begin{align*}
 \mathcal{D}_X^{\alpha, \beta, \theta}(I, L(U,W))  &\rightarrow  \mathcal{D}_X^{\gamma, \alpha, 2 \gamma}(I,W) \\
m &\mapsto \int m\, d\mathbf{X}
\end{align*}
is well defined and continuous .
\end{remark}

The next theorem is a version of the Omega lemma \cite[Proposition 5]{CL18} for delayed controlled paths.

\begin{theorem}(Delayed Omega lemma)\label{omega}
Let $ n\in\mathbb{N}$ and $ 0<\kappa\leqslant 1 $ for $G \in \mathscr{C}^{n+1+\kappa}(V^{2} ,W)$, $ \eta\in (0,1)$ and $r > 0$. Then the map
\begin{align*}
\mathfrak{D}G : \mathscr{D}_{X}^\beta([0,r],V)\times \mathscr{D}_{X}^\beta([-r,0],V) &\rightarrow \mathcal{D}_X^{\beta,\beta\eta\kappa,\beta(1+\eta\kappa)\wedge 2\beta}([0,r],W) \\
\big{(}y_{t},\xi_{t-r}\big{)}_{t\in[0,r]} &\mapsto \big{(}G (\xi_{0}+y_{t},\xi_{t-r})\big{)}_{t\in[0,r]}
\end{align*}
is locally of class $ \mathscr{C}^{n+\kappa(1-\eta)} $. 
\end{theorem}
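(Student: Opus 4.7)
The plan is to adapt the strategy of the classical Omega lemma (cf. \cite{Bai15b,CL18}) to the delayed setting, combining Taylor expansions of $G$ with interpolated H\"older estimates. The parameter $\eta$ encodes the classical trade-off: only exploiting an $\eta$-fraction of the $\kappa$-H\"older regularity of $D^{n+1}G$ in the output buys $(1-\eta)\kappa$ additional Fr\'echet smoothness. The argument splits into two parts: (a) verifying that $\mathfrak{D}G$ takes values in the claimed target space, and (b) proving Fr\'echet smoothness of order $n+\kappa(1-\eta)$ by induction on $n$.

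For (a), writing $z_t := G(\xi_0 + y_t, \xi_{t-r})$ and Taylor expanding to first order gives
\begin{align*}
z_{s,t} = G_x(\xi_0+y_s, \xi_{s-r})\, y_{s,t} + G_y(\xi_0+y_s, \xi_{s-r})\, \xi_{s-r,t-r} + R_{s,t},
\end{align*}
with $|R_{s,t}| \lesssim \|G\|_{\mathscr{C}^{1+\kappa}}(|y_{s,t}|^{1+\eta\kappa} + |\xi_{s-r,t-r}|^{1+\eta\kappa})$, where we only use the $\eta\kappa$-H\"older regularity of $DG$ (which follows from its $\kappa$-H\"older regularity by interpolation with boundedness). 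Substituting the controlled path decompositions of $y$ and $\xi$ identifies the natural Gubinelli derivatives
\begin{align*}
\zeta^0_s = G_x(\xi_0 + y_s, \xi_{s-r})\, y'_s, \qquad \zeta^1_s = G_y(\xi_0 + y_s, \xi_{s-r})\, \xi'_{s-r},
\end{align*}
and the remainder
\begin{align*}
m^\#_{s,t} = G_x(\xi_0+y_s, \xi_{s-r})\, y^\#_{s,t} + G_y(\xi_0+y_s, \xi_{s-r})\, \xi^\#_{s-r,t-r} + R_{s,t}
\end{align*}
then has H\"older exponent $\beta(1+\eta\kappa) \wedge 2\beta$. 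The $\beta\eta\kappa$-H\"older bound on $\zeta^0, \zeta^1$ follows from the $\beta\eta\kappa$-H\"older regularity of the composition $s \mapsto G_x(\xi_0+y_s, \xi_{s-r})$ together with the $\beta$-H\"older control on $y', \xi'$.

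For (b), the natural candidate for $D\mathfrak{D}G(y, \xi)(h,k)$ at a perturbation $(h,k) \in \mathscr{D}_X^\beta([0,r],V) \times \mathscr{D}_X^\beta([-r,0],V)$ is
\begin{align*}
t \mapsto G_x(\xi_0+y_t, \xi_{t-r})(k_0 + h_t) + G_y(\xi_0+y_t, \xi_{t-r})\, k_{t-r},
\end{align*}
which is itself a delayed controlled path by applying (a) to $DG$. A second-order Taylor expansion of $G$, with analogously interpolated remainder estimates, then yields the desired $o(\|(h,k)\|)$ bound in the norm \eqref{eqn:norm_delayed_controlled_path}. Higher-order derivatives follow by induction on $n$: since $D\mathfrak{D}G$ is built from compositions involving $G_x, G_y \in \mathscr{C}^{n+\kappa}$, the inductive hypothesis with $n-1$ in place of $n$ supplies the necessary smoothness. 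The main obstacle will be the careful bookkeeping of H\"older exponents when estimating cross terms such as $[G_x(\xi_0+y_t,\xi_{t-r}) - G_x(\xi_0+y_s,\xi_{s-r})]\, y^\#_{s,t}$, which require distributing the $\kappa$-H\"older norm of $DG$ across both factors in a way consistent with the target H\"older exponents; this is precisely where the parameter $\eta$ does its quantitative work. The local nature of the statement is also essential, as all constants depend on $\mathscr{C}^{n+1+\kappa}$ norms of $G$ restricted to the range of the paths under consideration, requiring uniform control on bounded subsets of the input space.
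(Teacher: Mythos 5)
Your outline is essentially sound, but it takes a genuinely different route from the paper. The paper's proof is a two-line reduction: by \cite[Remark 1.4]{GRS19}, a delayed controlled path based on $X$ is the same thing as an ordinary controlled path based on the two-component path $(X,X_{\cdot-r})$, and under this identification $\mathfrak{D}G$ becomes exactly the composition operator treated in \cite[Proposition 5]{CL18}, so the regularity statement (including the exponents $\beta\eta\kappa$ and $\beta(1+\eta\kappa)\wedge 2\beta$ and the loss $\kappa\eta$ of Fr\'echet smoothness) is quoted rather than re-proved. You instead re-derive the Omega lemma directly in the delayed setting: first-order Taylor expansion with the $\eta$-interpolated remainder $|R_{s,t}|\lesssim(|y_{s,t}|+|\xi_{s-r,t-r}|)^{1+\eta\kappa}$, identification of $\zeta^0,\zeta^1$ and $m^{\#}$, then differentiability via second-order expansions and induction on $n$. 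This is correct in spirit — it is essentially the Coutin--Lejay argument rerun with two arguments of $G$ and two ``directions'' $X_{s,t}$, $X_{s-r,t-r}$ — but it commits you to all the bookkeeping the citation avoids, in particular the part you only gesture at: the fractional order $\kappa(1-\eta)$ of the top derivative requires showing that $D^n(\mathfrak{D}G)$ is $\kappa(1-\eta)$-H\"older as a map into the weaker space, which is precisely where the interpolation of $D^{n+1}G$ (using only the $\eta$-fraction of its $\kappa$-H\"older norm) and hence the exponents $\beta\eta\kappa$, $\beta(1+\eta\kappa)\wedge2\beta$ in the target are genuinely consumed — for mere well-definedness of the map your $\zeta^0,\zeta^1$ are in fact $\beta$-H\"older when $n\geq1$. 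The trade-off: the paper's reduction is short and robust but hides the mechanism; your direct proof is self-contained and makes the role of $\eta$ and of the delayed structure explicit, at the cost of reproving \cite[Proposition 5]{CL18}. If you want the economical argument, note that your decomposition $z_{s,t}=\zeta^0_sX_{s,t}+\zeta^1_sX_{s-r,t-r}+m^{\#}_{s,t}$ is exactly the controlled-path decomposition with respect to $(X,X_{\cdot-r})$, which is the observation the paper exploits.
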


\begin{proof}
We noted in \cite[Remark 1.4]{GRS19} that every delayed controlled path based on $X$ can be seen as a usual controlled path based on $(X,X_{\cdot - r})$ and vice versa. Using this identification, the assertion just follows from  \cite[Proposition 5]{CL18}.
\end{proof}

Thanks to the delayed Omega lemma, we can state the following theorem:

\begin{theorem}\label{thm:reg_sol_map}
Let $0<\kappa\leqslant 1$, $ 2\leqslant n+\kappa$ and $\sigma \in\mathscr{C}^{n+1+\kappa}(W^{2},L(U,W))$. For a delayed $\gamma$-rough path $\mathbf{X}$, consider equation \eqref{eqn:rough_delay}. Then, under the same assumptions as in Theorem \ref{thm:delay_existence}, the solution map induced by \eqref{eqn:rough_delay} is locally of class $ \mathscr{C}^{n+\kappa(1-\eta)} $ for any $ \eta\in (0,1) $ provided $ \beta\big{(}2+\kappa\eta\big{)}> 1 $.
\end{theorem}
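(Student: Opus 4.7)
The strategy is to exhibit the solution as the zero of a suitably smooth map between Banach spaces and then invoke the Banach-space implicit function theorem, essentially adapting the approach of \cite{Bai15b, CL18} to the delayed framework that has already been built up. It suffices to establish the regularity claim on the first delay interval $[0,r]$: for a general $T>0$, one iterates the argument on $[kr,(k+1)r]$, using the solution on the previous piece as the new controlled-path initial condition, and regularity of the solution map composes across the pieces.

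On $[0,r]$ introduce, for $\xi\in\mathscr{D}_{X}^{\beta}([-r,0],W)$ and $y\in\mathscr{D}_{X}^{\beta}([0,r],W)$, the map
\begin{align*}
F(\xi,y)_t := y_t - \xi_0 - \int_0^t \sigma\bigl(y_s,\xi_{s-r}\bigr)\, d\mathbf{X}_s.
\end{align*}
Its zero set is exactly the graph of the solution map of \eqref{eqn:rough_delay} on $[0,r]$: the constraint $y_0=\xi_0$ is automatic from $F(\xi,y)_0=0$. The first main step is to show that $F$ is locally of class $\mathscr{C}^{n+\kappa(1-\eta)}$. Writing $y=\xi_0+(y-\xi_0)$ so as to match the statement of Theorem \ref{omega}, the delayed Omega lemma applied to $\sigma\in\mathscr{C}^{n+1+\kappa}$ gives that $(\xi,y)\mapsto\sigma(y_\cdot,\xi_{\cdot-r})$ is locally of class $\mathscr{C}^{n+\kappa(1-\eta)}$ with values in $\mathcal{D}_X^{\beta,\,\beta\eta\kappa,\,\beta(1+\eta\kappa)\wedge 2\beta}([0,r],L(U,W))$. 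Composing with the linear continuous rough integration map from Remark \ref{sewing 2} puts $F$ into $\mathscr{D}_X^{\beta}([0,r],W)=\mathcal{D}_X^{\beta,\beta,2\beta}$; the two sewing inequalities required for that integration to close, namely $\beta(1+\eta\kappa)+\beta>1$ and $\beta\eta\kappa+2\beta>1$, both follow from the standing hypothesis $\beta(2+\kappa\eta)>1$. Hence $F$ has the advertised regularity.

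It remains to verify that the partial derivative
\begin{align*}
\partial_y F(\xi,y)[h] = h - \int_0^\cdot \sigma_x(y_s,\xi_{s-r})\, h_s\, d\mathbf{X}_s
\end{align*}
is a topological isomorphism at every zero $(\xi,y)$. This amounts to showing that, for every $g\in\mathscr{D}_X^{\beta}([0,r],W)$, there exists a unique $h\in\mathscr{D}_X^{\beta}([0,r],W)$ solving the linear rough delay equation $h_t=g_t+\int_0^t\sigma_x(y_s,\xi_{s-r})\,h_s\, d\mathbf{X}_s$ with coefficient $\sigma_x(y_\cdot,\xi_{\cdot-r})\in\mathscr{D}_X^{\beta}$ fixed; well-posedness and continuous dependence on $g$ for such linear equations follow from Theorem \ref{thm:delay_existence} and the estimates behind its proof in \cite{GRS19}. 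Bijectivity plus the open mapping theorem then yield that $\partial_y F(\xi,y)$ is an isomorphism, and the implicit function theorem in Banach spaces produces the solution map $\xi\mapsto y$ locally as a $\mathscr{C}^{n+\kappa(1-\eta)}$ mapping; iterating over successive delay intervals gives the result for arbitrary $T>0$. I expect the main obstacle is the careful H\"older-exponent bookkeeping linking the Omega lemma to the sewing step, which is precisely what forces the sharp constraint $\beta(2+\kappa\eta)>1$.
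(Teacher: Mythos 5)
Your overall strategy is sound and it is genuinely different in structure from the paper's argument. The paper does not set up a single zero-finding problem on all of $[0,r]$: it defines the map $\Gamma(\xi,y)=\int_0^\cdot\sigma(y_\tau+\xi_0,\xi_{\tau-r})\,d\mathbf{X}_\tau$ (smooth by the delayed Omega lemma plus Remark \ref{sewing 2}, exactly the two ingredients you use, with the same exponent bookkeeping forcing $\beta(2+\kappa\eta)>1$), proves via the quantitative estimates of \cite{NNT08} that $\Gamma(\xi,\cdot)$ is a contraction on a ball over a small interval $[0,\tau_3]$ whose length depends only on $A$ and $M$, applies the implicit function theorem there --- where invertibility of $I-\partial_y\Gamma$ is automatic because $\partial_y\Gamma$ has norm at most $1/2$ --- and then patches finitely many such pieces across $[0,r]$, viewing $\mathscr{D}_X^{\beta}[0,r]$ as a closed subspace of the product. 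What the paper's route buys is that no separate invertibility argument is ever needed; what your route buys is a cleaner one-shot statement on the whole interval, at the price of having to prove that $\partial_y F(\xi,y)$ is an isomorphism there.

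That isomorphism step is the one place where your write-up is under-justified: solvability of $h_t=g_t+\int_0^t\sigma_x(y_s,\xi_{s-r})h_s\,d\mathbf{X}_s$ for arbitrary $g\in\mathscr{D}_X^{\beta}([0,r],W)$ does not follow from Theorem \ref{thm:delay_existence} as cited. That theorem concerns the nonlinear initial-value problem with a bounded $C^3_b$ coefficient and no additive forcing, whereas here the coefficient $\sigma_x(y_\cdot,\xi_{\cdot-r})$ is a (delayed) controlled path in time, the dependence on $h$ is linear hence unbounded, and the inhomogeneity is itself a general controlled path rather than a constant. The claim is true, but it needs its own argument --- e.g. show that on a subinterval of length $\tau$ with $\tau^{\gamma-\beta}$ times a polynomial in $A$ and $\Vert\xi\Vert_{\mathscr{D}_X^{\beta}}$ at most $1/2$, the affine map $h\mapsto g+\int\sigma_x(y,\xi_{\cdot-r})h\,d\mathbf{X}$ is a contraction (this is the same product-of-controlled-paths estimate underlying Theorem \ref{THM} in the appendix), and then concatenate over finitely many subintervals using linearity, which also gives injectivity and hence, with the open mapping theorem, the isomorphism property. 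Once that lemma is supplied, your global implicit function theorem argument and the iteration over successive delay intervals for general $T$ go through.
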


\begin{proof}
Fix $\hat{\xi} \in \mathscr{D}_{X}^{\beta}([-r,0],W)$. We aim to prove the claimed regularity in a neighbourhood around $\hat{\xi}$. Choose  $ M>0 $ such that 
\begin{align*}
 \hat{\xi} \in  B := \big{\lbrace} {\xi} \in\mathscr{D}_{X}^{\beta}([-r,0],W) , \ \Vert{\xi}\Vert_{\mathscr{D}_{X}^{\beta}([-r,0],W)}<M\big{\rbrace}.
\end{align*}
 Let $\mathscr{D}_{X,0}^\beta([a,b],W)$ be the set of functions in $ \mathscr{D}_{X}^\beta([a,b],W) $ starting from $ 0 $. Let $0 <  t_{0}\leqslant r $ and define
\begin{align}\label{map}
\Gamma : B \times \mathscr{D}_{X,0}^{\beta}([0,t_{0}],W) &\to \mathscr{D}_{X,0}^\beta([0,t_{0}],W)\nonumber \\ 
\big{(}\xi_{t-r},y_{t}\big{)}_{0\leqslant t\leqslant t_{0}} &\mapsto \bigg{(}\int_{0}^{t} \sigma(y_{\tau}+\xi_{0},\xi_{\tau -r})d\mathbf{X}_{\tau}\bigg{)}_{0\leqslant t\leqslant t_{0}}.
\end{align} 
Note that by Remark \ref{sewing 2} and Theorem \ref{omega}, this map is locally of class $ \mathscr{C}^{n+\kappa(1-\eta)} $. Using the estimates (59) and (61) in \cite{NNT08}, we see that
\begin{align}\label{estimatee}
\begin{split}
\Vert\Gamma(\xi ,y)\Vert_{\mathscr{D}_{X}^{\beta}[0,t_{0}]}&\leqslant C_{1}A^{3}\big{(}1+\Vert\xi\Vert_{\mathscr{D}_{X}^{\beta}[-r,0]}^{2}\big{)}\big{(}1 + t_0^{\gamma -\beta}\Vert y\Vert ^{2}_{\mathscr{D}_{X}^{\beta}[0,t_{0}]}\big{)} \qquad \\
\Vert\Gamma(\xi ,y)-\Gamma(\xi ,\tilde{y}) \Vert_{\mathscr{D}_{X}^{\beta}[0,t_{0}]}&\leqslant C_{1}A^{3 }\big{(}1+\Vert y\Vert_{\mathscr{D}_{X}^{\beta}[0,t_{0}]}+\Vert \tilde{y}\Vert_{\mathscr{D}_{X}^{\beta}[0,t_{0}]}+\Vert\xi\Vert_{\mathscr{D}_{X}^{\beta}[-r,0]}\big{)}^{2}\Vert y-\tilde{y}\Vert_{\mathscr{D}_{X}^{\beta}[0,t_{0}]}t_0^{\gamma -\beta}
\end{split}
\end{align}
where $ C_{1}$ only depends on $\sigma$. Let
$ C := C_{1}A^{3}(1+M^{2}) $ and set $ \tau_{1} := (8C^{2})^{\frac{-1}{\gamma-\beta}} $. From \cite[Lemma 4.1]{NNT08},
\begin{align}\label{ess}
\sup\big{\lbrace} u\in\mathbb{R}^{+}\, :\, C(1 + \tau_1^{\gamma-\beta} u^2 )\leqslant u \big{\rbrace}\leqslant (4+2\sqrt{2})C =: M_{1}.
\end{align}
Choose $ \tau_{2}$ such that
\begin{align*}
C_{1}A^{3}(1+2M_{1}+M)^2 \tau_{2}^{\gamma-\beta} \leq \frac{1}{2}.
\end{align*}
Set $ \tau_{3} := \min\lbrace\tau_{1},\tau_{2},r\rbrace $. Choosing $\tau_3$ smaller if necessary, we can assume that $ N := \frac{r}{\tau_{3}}\in\mathbb{N} $. Set
\begin{align*}
B_{1} := \bigg{\lbrace}y\in\mathscr{D}_{X,0}^{\beta}([0,\tau_3],W)\, :\, \Vert y\Vert_{\mathscr{D}_{X,0}^{\beta}([0,\tau_3],W)}\leqslant M_{1}\bigg{\rbrace}.
\end{align*}
With this choice, the map 
\begin{align*}
 \Gamma_1 := \Gamma \vert_{B \times B_1} \colon B \times B_1 \to  B_1
\end{align*}
is well defined. Moreover, for fixed $\hat{\xi} \in B$,
\begin{align*}
\Lambda_{1} : B_{1} &\to B_{1}\\
(y_{s})_{ 0\leqslant s\leqslant\tau_{3}} &\mapsto \bigg{(}\int_{0}^{s} \sigma(\hat{\xi}_{0}+y_{\tau},\hat{\xi}_{\tau-r})\, d\mathbf{X}_{\tau}\bigg{)}_{0\leqslant s\leqslant\tau_{3}}
\end{align*} 
is a contraction, so it admits a unique fixed point which we denote by $ (z^{1,\hat{\xi}}_{s})_{0\leqslant s\leqslant \tau_{3}} $. This shows that we can use the implicit function theorem on Banach spaces (cf. \cite[2.5.7 Implicit Function Theorem]{AMR88} or \cite[Theorem 1]{CL18}) to see that there is a neighbourhood $U$ around $\hat{\xi}$ such that for every $\xi \in U$, there are functions  $ (z^{1,{\xi}}_{s})_{0\leqslant s\leqslant \tau_{3}} $ with the property that $\Lambda_1(z^{1,\xi}) = z^{1,\xi}$ and the map $\xi \mapsto z^{1,\xi}$ is of class $ \mathscr{C}^{n+\kappa(1-\eta)} $. Therefore, $\xi \mapsto (y^{1,\xi}_{s}=\xi_{0}+z^{1,\xi}_{s})_{0\leqslant s\leqslant \tau_{3}} $, which is the solution of equation \eqref{eqn:rough_delay} in $ [0,\tau_{3}] $, is also locally of class $ \mathscr{C}^{n+(1-\eta)\kappa} $. Moreover,
\begin{align}
\Vert z^{1,\xi}\Vert_{\mathscr{D}_{X}^\beta([0,\tau_{3}])} \leqslant (4+2\sqrt{2})C
\end{align}
holds for every $\xi \in U$. Now we proceed inductively. For $ 2\leqslant j\leqslant N $, define 
\begin{align*}
B_{j}=\bigg{\lbrace}y\in\mathscr{D}_{X,0}^{\beta}([(j-1)\tau_{3},j\tau_{3}],W) \, :\,  \Vert y\Vert_{\mathscr{D}_{X,0}^{\beta}[(j-1)\tau_{3},j\tau_{3}]}\leqslant M_{1}\bigg{\rbrace}
\end{align*}
and
\begin{align*}
\Lambda_{j} : B_{j} &\to B_{j}\\
\big{(}y_{s}\big{)}_{ (j-1)\tau_{3}\leqslant s\leqslant j\tau_{3}} &\mapsto \bigg{(}\int_{(j-1)\tau_{3}}^{s} \sigma (y^{j-1,\hat{\xi}}_{(k-1)\tau_{3}} + y_{\tau},\hat{\xi}_{\tau-r})d\mathbf{X}_{\tau}\bigg{)}_{(j-1)\tau_{3}\leqslant s\leqslant j\tau_{3}}.
\end{align*}
Again, this map is contraction and admits a unique fixed point, namely $\big{(}z^{j,\hat{\xi}}_{s}\big{)}_{(j-1)\tau_{3}\leqslant s\leqslant j\tau_{3}}$, and a locally defined map $\xi \mapsto \big{(}z^{j,{\xi}}_{s}\big{)}_{(j-1)\tau_{3}\leqslant s\leqslant j\tau_{3}}$ which is of class $ \mathscr{C}^{n+\kappa(1-\eta)} $. Again, 
\begin{align}\label{ZZS}
\Vert z^{j,\xi}\Vert_{\mathscr{D}_{X}^\beta([(j-1)\tau_{3},j\tau_{3}])} \leqslant (4+2\sqrt{2})C
\end{align}
holds for all $\xi$ in a neighbourhood around $\hat{\xi}$. This shows that $ (y^{j,\xi}_{s} = y^{j-1,\xi}_{(j-1)\tau_{3}}+z^{j,\xi}_{s})_{(j-1)\tau_{3}\leqslant s\leqslant j\tau_{3}} $, the solution of \eqref{eqn:rough_delay} in $ [(j-1)\tau_{3},j\tau_{3}] $, has the same local regularity. Finally, the following map is locally of class $ \mathscr{C}^{n+\kappa(1-\eta)} $:
\begin{align*}
\Lambda :B &\to \prod_{1\leqslant j\leqslant N}\mathscr{D}_{X}^{\beta}[(j-1)\tau_{3},j\tau_{3}]\\
\xi &\mapsto \prod_{1\leqslant j\leqslant N}\big{(}y^{j,\xi}_{s}\big{)}_{(j-1)\tau_{3}\leqslant s\leqslant j\tau_{3}}.
\end{align*}
Since we can consider $ \mathscr{D}_{X}^{\beta}[0,r] $ as a closed subspace of $\prod_{1\leqslant j \leqslant N}\mathscr{D}_{X}^{\beta}[(j-1)\tau_{3},j\tau_{3}] $, the regularity claim is proved. 
\end{proof}

\begin{remark}
Since $ C^{3}_{b}\subset \mathscr{C}^{3}$, Theorem \ref{thm:reg_sol_map} implies that the solution of \eqref{eqn:rough_delay} is Fr\'echet differentiable in the initial condition.
\end{remark}

The proof of Theorem \ref{thm:reg_sol_map} also reveals a bound for the solution to \eqref{eqn:rough_delay} which we record in the next theorem.

\begin{theorem}\label{thm:bound_rdde}
 Under the same assumptions as in Theorem \ref{thm:delay_existence}, there exists a polynomial $P:\mathbb{R}\times\mathbb{R}\rightarrow\mathbb{R} $ such that its coefficients depend on $ \sigma $, $\beta$ and $\gamma$ and if $y^{\xi}$ denotes the solution to \eqref{eqn:rough_delay} with initial condition $\xi$, we have
 \begin{align}\label{first-estimate}
\Vert y^{\xi}\Vert _{{\mathscr{D}_{X}^{\beta}([0,r])}}\leqslant P\big{(}A,\Vert\xi\Vert_{{\mathscr{D}_{X}^{\beta}([-r,0])}}\big{)}
\end{align}
 where $ A = 1 + \Vert \mathbf{X}\Vert_{\gamma,[0,r]}$.
\end{theorem}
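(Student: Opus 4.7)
The plan is to extract the bound directly from the fixed-point construction carried out in the proof of Theorem \ref{thm:reg_sol_map}. That construction already produces, on each of $N$ consecutive subintervals $[(j-1)\tau_{3},j\tau_{3}]$ partitioning $[0,r]$, a local bound
\[
 \|z^{j,\xi}\|_{\mathscr{D}_{X}^{\beta}([(j-1)\tau_{3},j\tau_{3}])} \leq M_{1} = (4+2\sqrt{2})\,C,\qquad C = C_{1}A^{3}(1+M^{2}),
\]
where $M := \|\xi\|_{\mathscr{D}_{X}^{\beta}([-r,0])}$ and $C_{1}$ depends only on $\sigma$. This already shows that each local piece admits a bound that is manifestly polynomial in $A$ and $M$.

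Next I would track the polynomial dependence of $N$ on $A$ and $M$. Recall $\tau_{3} = \min\{\tau_{1},\tau_{2},r\}$ with $\tau_{1} = (8C^{2})^{-1/(\gamma-\beta)}$ and $\tau_{2}$ chosen so that $C_{1}A^{3}(1+2M_{1}+M)^{2}\tau_{2}^{\gamma-\beta} \leq 1/2$. Both $1/\tau_{1}$ and $1/\tau_{2}$ are therefore polynomial expressions in $A$ and $M$ (after raising to the power $1/(\gamma-\beta)$, whose coefficients depend only on $\sigma,\beta,\gamma$), and the small adjustment making $N = r/\tau_{3}$ an integer preserves this polynomiality. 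Thus $N$ and $M_{1}$ are both bounded by a polynomial in $A$ and $M$ with coefficients depending only on $\sigma,\beta,\gamma$ and $r$.

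The remaining step is to glue the $N$ local bounds into a global one on $[0,r]$. Writing $y^{\xi}_{t} = \xi_{0} + \sum_{j : j\tau_{3} \leq t} z^{j,\xi}_{j\tau_{3}} + z^{\lceil t/\tau_{3}\rceil,\xi}_{t}$, the initial value is $|\xi_{0}| \leq M$ and the Gubinelli derivative $(y^{\xi})'_{\cdot} = \sigma(y^{\xi}_{\cdot},\xi_{\cdot - r})$ is controlled via $\|\sigma\|_{\infty}$ and the local bounds on $y^{\xi}$. For the $\beta$-Hölder seminorm of $(y^{\xi})'$ and the $2\beta$-seminorm of the remainder $(y^{\xi})^{\#}$, one applies the standard concatenation estimate: for $s,t$ lying in different subintervals, split the increment $y^{\xi}_{s,t}$ at the intermediate endpoints and use the bound $M_{1}\tau_{3}^{\beta}$ on each chunk; dividing by $|t-s|^{\beta}$ yields a factor at most $N$. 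Combining, one arrives at $\|y^{\xi}\|_{\mathscr{D}_{X}^{\beta}([0,r])} \lesssim N\,M_{1} + M$, which is polynomial in $A$ and $M$ as required.

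The main obstacle is the gluing step for the remainder term $(y^{\xi})^{\#}$: the $2\beta$-seminorm is the most delicate since one must verify that across subinterval boundaries the boundary terms in $X_{s,t}$ and $X_{s-r,t-r}$ absorb into the $2\beta$-regularity without producing non-polynomial factors. This is routine given the controlled-path structure and the uniform local bounds, but it is where the argument has to be carried out carefully rather than invoked as a black box.
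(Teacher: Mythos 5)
Your argument is correct and is essentially the paper's own proof: the paper likewise recycles the local bounds \eqref{ZZS} and the choice of $\tau_3$ (hence polynomiality of $N$) from the proof of Theorem \ref{thm:reg_sol_map}, and glues them via subadditivity of the H\"older seminorms together with the concatenation estimate \eqref{WZYXW} for the remainder, whose cross terms $ (y^{\xi})'_{s,u}X_{u,t}$ are absorbed into the $2\beta$-scale exactly as you indicate (at the cost of the factor $r^{\gamma-\beta}\Vert X\Vert_{\gamma}$).
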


\begin{proof}
 With the same notation as in the proof of Theorem \ref{thm:reg_sol_map},
\begin{align}\label{WZYXW}
\Vert (y^{\xi})^{\#}\Vert_{2\beta,[0,r]}\leqslant \sum_{1\leqslant k\leqslant N}\Vert(z^{k,\xi})^{\#}\Vert_{2\beta,[(k-1)\tau_{3},k\tau_{3}]} + r^{\gamma-\beta}\Vert X\Vert_{\gamma,[0,r]}\sum_{1\leqslant k\leqslant N}\Vert (z^{k,\xi})^{\prime}\Vert_{\beta,[(k-1)\tau_{3},k\tau_{3}]}.
\end{align}
The estimate \eqref{first-estimate} now follows from \eqref{ZZS}, \eqref{WZYXW}, subadditivity of the H\"older norm and our choice for $ \tau_{3} $.
\end{proof}

It is possible to show that all derivatives solve linear, non-autonomous rough delay equations obtained by formally taking the derivatives of \eqref{eqn:rough_delay}. We give a proof of this result for the first derivative in the next proposition. Higher order derivatives can be treated similarly.

\begin{proposition}\label{prop:linearization_eq}
For $\xi \in \mathscr{D}_X^{\beta}([-r,0],W)$, let $ (y^{\xi}_{t})_{0 \leq t \leq r} $ be the solution to \eqref{eqn:rough_delay}. The derivative of the solution at $\xi$ in the direction of $ \tilde{\xi} $ exists and satisfies the following equation:
\begin{align}{\label{MKL}}
  \begin{split}
Dy^{\xi}[\tilde{\xi}](t) - \tilde{\xi}_0 &= \int_{0}^{t}\big{[} \sigma_{x}(y^{\xi}_{\tau},\xi_{\tau -r})Dy^{\xi}[\tilde{\xi}](\tau) + \sigma_{y}(y^{\xi}_{\tau},\xi_{\tau-r})\tilde{\xi}_{\tau -r}\big{]}d\mathbf{X}_{\tau};\quad t \in [0,r] \\
Dy^{\xi}[\tilde{\xi}](t) &= \tilde{\xi}_t; \quad t \in [-r,0].
  \end{split}
\end{align}
\end{proposition}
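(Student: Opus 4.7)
My plan is to obtain \eqref{MKL} by implicitly differentiating the fixed point equation solved by $y^\xi$, using Theorem \ref{thm:reg_sol_map} for the Fr\'echet differentiability of the solution map, the delayed Omega lemma (Theorem \ref{omega}) for the derivative of the nonlinear substitution, and the linearity and continuity of the rough integral on the space of delayed controlled paths (Remark \ref{sewing 2}).

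First, I would check that Theorem \ref{thm:reg_sol_map} applies to give Fr\'echet differentiability of $\xi \mapsto y^\xi$ from $\mathscr{D}_X^{\beta}([-r,0],W)$ into $\mathscr{D}_X^{\beta}([0,r],W)$. Since $\sigma \in C_b^3 \subset \mathscr{C}^3$, the hypothesis is met with $n=1$ and $\kappa=1$, and for any $\beta \in (1/3,1/2]$ one can pick $\eta \in (0,1)$ close enough to $1$ to ensure $\beta(2+\eta)>1$. This yields local $\mathscr{C}^{1+(1-\eta)}$-regularity of the solution map, so in particular $Dy^{\xi}[\tilde\xi]$ exists as an element of $\mathscr{D}_X^{\beta}([0,r],W)$.

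Second, I would apply the chain rule to the fixed point formulation of Theorem \ref{thm:reg_sol_map}. Writing $z^{\xi}_t := y^{\xi}_t - \xi_0$ for $t \in [0,r]$, the identity $z^{\xi} = \Gamma(\xi, z^{\xi})$ holds with $\Gamma$ as in \eqref{map}. The delayed Omega lemma identifies the derivative of the substitution $(y,\xi) \mapsto \sigma(y + \xi_0, \xi_{\cdot - r})$ along $(\tilde y, \tilde\xi)$ as
\[
\sigma_{x}(y+\xi_{0},\xi_{\cdot-r})(\tilde{y}+\tilde{\xi}_{0}) + \sigma_{y}(y+\xi_{0},\xi_{\cdot-r})\tilde{\xi}_{\cdot-r},
\]
and by Remark \ref{sewing 2} the rough integration map is linear and continuous on the relevant space of delayed controlled paths. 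Composing these and differentiating $z^{\xi} = \Gamma(\xi, z^{\xi})$ in $\xi$ along $\tilde\xi$ gives
\[
Dz^{\xi}[\tilde\xi](t) = \int_{0}^{t}\!\bigl[\sigma_{x}(y^{\xi}_{s},\xi_{s-r})\bigl(Dz^{\xi}[\tilde\xi](s)+\tilde\xi_{0}\bigr) + \sigma_{y}(y^{\xi}_{s},\xi_{s-r})\tilde\xi_{s-r}\bigr]\,d\mathbf{X}_{s}.
\]
Adding $\tilde\xi_0$ on both sides and using $Dy^{\xi}[\tilde\xi] = \tilde\xi_0 + Dz^{\xi}[\tilde\xi]$ on $[0,r]$ converts this precisely into \eqref{MKL}. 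For $t \in [-r,0]$, the initial-segment identity $y^{\xi}_t = \xi_t$ immediately yields $Dy^{\xi}[\tilde\xi](t) = \tilde\xi_t$.

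The main technical hurdle will be the bookkeeping of regularity exponents: one must choose exponents $(\alpha,\beta,\theta)$ in the delayed Omega lemma so that, after the loss of $\kappa\eta$ in the H\"older exponent coming from Theorem \ref{omega}, the output still lands in a space for which the integrability conditions $\theta + \gamma > 1$ and $\beta + 2\gamma > 1$ from Remark \ref{sewing 2} are met, so that the composition and integration of the derivative produce a path back in $\mathscr{D}_X^{\beta}([0,r],W)$. A secondary, harmless step is patching the identification \eqref{MKL} over the subintervals $[(j-1)\tau_{3}, j\tau_{3}]$ used in the proof of Theorem \ref{thm:reg_sol_map}: on each subinterval the derivative inherits, as an initial value, the value of $Dy^{\xi}[\tilde\xi]$ at the left endpoint produced on the previous interval, mirroring the inductive construction of $y^{\xi}$ itself.
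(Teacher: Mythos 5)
Your argument is correct, but it takes a different route from the paper. The paper, having secured existence of $Dy^{\xi}[\tilde{\xi}]$ from Theorem \ref{thm:reg_sol_map}, verifies \eqref{MKL} by hand: it writes the difference quotient $z^{-1}(y^{\xi+z\tilde{\xi}}-y^{\xi})$ minus the candidate integral, expresses the integrand via an exact first-order Taylor remainder ($A^{z}_{\tau}M^{z}_{\tau}+B^{z}_{\tau}$ versus $A_{\tau}M_{\tau}+B_{\tau}$), shows convergence of these integrands in the delayed controlled path norm $\mathcal{D}_{X}^{\beta}[0,r]$ as $z\to 0$, and passes to the limit under the rough integral by the continuity in Remark \ref{sewing 2}. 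You instead differentiate the fixed-point identity $z^{\xi}=\Gamma(\xi,z^{\xi})$ by the chain rule, using the delayed Omega lemma for the substitution map and linearity/continuity of the integration map; this is the natural continuation of the implicit-function-theorem proof of Theorem \ref{thm:reg_sol_map} and avoids the explicit difference-quotient estimates, at the cost of one point you should make explicit: Theorem \ref{omega} as stated only asserts the regularity class of $\mathfrak{D}G$, not the formula for its derivative. That identification is easy to supply — either cite the derivative computation behind \cite[Proposition 5]{CL18}, or note that convergence in the (delayed) controlled path norm dominates uniform convergence, so the Fr\'echet derivative of the substitution map evaluated at each time $t$ must agree with the classical pointwise derivative $\sigma_{x}(\xi_{0}+y_{t},\xi_{t-r})(\tilde{y}_{t}+\tilde{\xi}_{0})+\sigma_{y}(\xi_{0}+y_{t},\xi_{t-r})\tilde{\xi}_{t-r}$, and the integral in \eqref{MKL} is taken with precisely this canonical controlled structure — but without a sentence to this effect your chain-rule step is formally incomplete. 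Your exponent bookkeeping matches what is already checked in the proof of Theorem \ref{thm:reg_sol_map} (the condition $\beta(2+\kappa\eta)>1$ together with $\gamma>\beta$ gives both $\theta+\gamma>1$ and $\beta\eta\kappa+2\gamma>1$), and the patching over the intervals $[(j-1)\tau_{3},j\tau_{3}]$ is indeed harmless; in fact it can be skipped altogether, since the identity $z^{\xi}=\Gamma(\xi,z^{\xi})$ holds on all of $[0,r]$ (the contraction property is needed only for existence and the implicit function theorem, not for the identity), so one may differentiate the global integral identity directly. What your route buys is a shorter, more structural proof that reuses the machinery of Theorem \ref{thm:reg_sol_map}; what the paper's route buys is an explicit limit argument whose intermediate quantities ($A^{z},M^{z},B^{z}$) are reused in spirit for the quantitative bounds of Theorems \ref{THM} and \ref{TTH} in the appendix.
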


\begin{proof}
By definition,
\begin{align*}
&\frac{y^{\xi+z\tilde{\xi}}_{s,t}-y^{\xi}_{s,t}}{z} - \int_{s}^{t} \big{[}\sigma_{x} (y^{\xi}_{\tau},\xi_{\tau -r})Dy^{\xi}[\tilde{\xi}](\tau) + \sigma_{y}(y^{\xi}_{\tau},\tau_{\tau-r})\tilde{\xi}_{\tau -r}\big{]}d\mathbf{X}_{\tau}\\
&=\int_{s}^{t}\bigg{[}\frac{ \sigma(y_{\tau}^{\xi+z\tilde{\xi}},\xi_{\tau}+z\tilde{\xi}_{\tau -r}) - \sigma(y^{\xi}_{\tau},\xi_{\tau-r})}{z}-\big{[} \sigma_{x}(y^{\xi}_{\tau},\xi_{\tau -r})Dy^{\xi}[\tilde{\xi}](\tau) + \sigma_{y}(y^{\xi}_{\tau},\xi_{\tau-r})\tilde{\xi}_{\tau -r}\big{]}\bigg{]}d\mathbf{X}_{\tau}\\
&=\int_{s}^{t}\bigg{[}\big{[}A^{z}_{\tau}M^{z}_{\tau} + B^{z}_{\tau}\big{]}-\big{[}A_{\tau}M_{\tau} + B_{\tau}\big{]}\bigg{]}d\mathbf{X}_{\tau}
\end{align*}
where 
\begin{align*}
A_{\tau}^{z} = \int_{0}^{1}\sigma_{x}\big{(}&\eta y_{\tau}^{\xi + z\tilde{\xi}}+(1-\eta)y_{\tau}^{z},\xi_{\tau -r},+\eta z\tilde{\xi}_{\tau -r}\big{)}d\eta\ \ \ ,\ \ \ M_{\tau}^{z}=\frac{y_{\tau}^{\xi+z\tilde{\xi}}-y_{\tau}^{\xi}}{z}\\
& B_{\tau}^{z}=\int_{0}^{1} \sigma_{y}\big{(}\eta y_{\tau}^{\xi+z\tilde{\xi}}+(1-\eta)y_{\tau}^{\xi},\xi_{\tau -r},+\eta z\tilde{\xi}_{\tau -r}\big{)}\tilde{\xi}_{\tau -r}d\eta 
\end{align*}
and
\begin{align*}
 A_{\tau} = \sigma_{x}(y^{\xi}_{\tau},\xi_{\tau -r}),\quad M_{\tau} = Dy^{\xi}[\tilde{\xi}](\tau),\quad B_{\tau} = \sigma_{y}(y^{\xi}_{\tau},\xi_{\tau-r})\tilde{\xi}_{\tau -r}.
\end{align*}
Note that by Theorem (\ref{thm:reg_sol_map}), $ \lim_{z\rightarrow 0}\Vert M_{.}^{z}-M_{.}\Vert_{\mathscr{D}_{X}^{\beta}[0,r]} =0 $. From continuity in the initial condition, we furthermore see that $ \lim_{z\rightarrow 0}\Vert y^{\xi +z\tilde{\xi}}-y^{\xi}\Vert_{\mathscr{D}_{X}^{\beta}[0,r]} =0$. Consequently, thanks to our assumptions on $ \sigma$, it is not hard too see that
\begin{align*}
\lim_{z\rightarrow 0}\bigg{[}\big{\Vert} [A^{z}_{.}M^{z}_{.}+B^{z}_{.}]-[A_{.}M_{.}+B_{.}]\big{\Vert} _{\ \mathcal{D}_{X}^{\beta}[0,r]}\bigg{]}=0.
\end{align*} 
Using remark (\ref{sewing 2}), equality (\ref{MKL}) can be verified.
\end{proof}

\subsection{Rough delay equations with a linear drift}

Our next goal is to generalize the theory in order to include a drift term in the equation. More precisely, we aim to solve the equation
 \begin{align}\label{drift}
\begin{split}
&dy_{t}=B(y_t,y_{t-r})dt + \sigma (y_{t},y_{t-r})d \mathbf{X}_{t}\\
&y_{s}=\xi_{s}, \ \ \ -r\leqslant s\leqslant 0
\end{split}
\end{align}
with initial condition $\xi \in \mathscr{D}_{X}^\beta([-r , 0],W)$ for a linear drift $ B:W^{2}\rightarrow W $ and to give a bound for the solution map. We believe that we could even include a nonlinear drift satisfying suitable growth assumptions as in \cite{RS17}, but we restrict ourselves to a linear drift here for the sake of simplicity. The next theorem is the main result of this section.

\begin{theorem}\label{thm:SDDE_linear_drift}
 Let $\sigma \in C^4_b$. Then the equation \eqref{drift} has a unique solution $y \in \mathscr{D}_X^{\beta}([0,r],W)$. Moreover, there is a polynomial $Q$ depending on $B$, $\sigma$, $\gamma$ and $\beta$ such that
 \begin{align*}
  \| y \|_{\mathscr{D}_X^{\beta}([0,r])} \leq Q (A,\Vert\xi\Vert_{{\mathscr{D}_{X}^{\beta}([-r,0])}})
 \end{align*}
where $ A = 1+\Vert \mathbf{X} \Vert_{\gamma,[0,r]} $.
\end{theorem}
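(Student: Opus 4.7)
The plan is to adapt the Picard iteration scheme from the proof of Theorem \ref{thm:reg_sol_map} by incorporating the drift into the fixed point map and then concatenating local solutions. The key observation is that, since $B$ is linear and the Riemann integral of a bounded path is Lipschitz in time, the drift contribution lives at a strictly higher Hölder scale than the rough integral and can therefore be absorbed by choosing the local time step small enough.

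Concretely, I would replace the map \eqref{map} by
\begin{align*}
\Gamma(\xi, y)(t) = \int_0^t B(\xi_0 + y_\tau, \xi_{\tau - r})\, d\tau + \int_0^t \sigma(\xi_0 + y_\tau, \xi_{\tau - r})\, d\mathbf{X}_\tau
\end{align*}
acting on $B \times \mathscr{D}_{X,0}^\beta([0, t_0], W)$ (with $B$ now understood as the drift, abusing the notation of the cited proof). The rough integral part obeys estimates of the form \eqref{estimatee}. For the drift part, since $B$ is linear with bounded derivatives, the path $\tau \mapsto B(\xi_0 + y_\tau, \xi_{\tau - r})$ lies in $\mathscr{D}_X^\beta$, and its Riemann integral is naturally a controlled path with vanishing Gubinelli derivative and remainder of order $|t-s|$. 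Using $|t-s| \leq t_0^{1-2\beta}|t-s|^{2\beta}$, this yields
\begin{align*}
\Bigl\| \int_0^\cdot B(\xi_0 + y_\tau, \xi_{\tau - r})\, d\tau \Bigr\|_{\mathscr{D}_X^\beta([0, t_0])} \leq C_B\, t_0^{1 - 2\beta}\bigl(1 + \|\xi\|_{\mathscr{D}_X^\beta} + \|y\|_{\mathscr{D}_X^\beta}\bigr)
\end{align*}
with $C_B$ depending only on $\|B\|$, together with an analogous Lipschitz estimate in $y$.

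Combining these with \eqref{estimatee} produces analogues of both bounds in \eqref{estimatee} for the modified $\Gamma$ with an extra additive term scaling like $t_0^{1 - 2\beta}$. Choosing a local time step $\tau_3$ polynomially small in $A$, $\|\xi\|_{\mathscr{D}}$ and $\|B\|$ renders $\Gamma(\xi,\cdot)$ a contraction on a ball of radius $M_1$ analogous to \eqref{ess}, yielding a unique local solution on $[0, \tau_3]$. Iterating on the subsequent intervals $[(j-1)\tau_3, j\tau_3]$ exactly as in the proof of Theorem \ref{thm:reg_sol_map} produces a global solution on $[0, r]$, with uniqueness on each subinterval giving global uniqueness. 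The polynomial bound $Q(A, \|\xi\|_{\mathscr{D}})$ then follows, as in the proof of Theorem \ref{thm:bound_rdde}, by summing the local bounds via subadditivity of the Hölder norms as in \eqref{WZYXW}, since each local bound and the number of subintervals $N = r/\tau_3$ depend polynomially on $A$ and $\|\xi\|_{\mathscr{D}}$.

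The main obstacle is the bookkeeping of the two competing scales: the drift contribution scales as $t_0^{1 - 2\beta}$ and is linear in $\|y\|$, while the rough contribution scales as $t_0^{\gamma - \beta}$ and is quadratic. One must choose $\tau_3$ small enough to dominate both contributions while ensuring that $\tau_3^{-1}$, and hence $N$, grows at most polynomially in the input norms so that the final bound remains polynomial. The stronger $C^4_b$ hypothesis on $\sigma$ (compared to $C^3_b$ in Theorem \ref{thm:delay_existence}) presumably enters when re-applying the delayed Omega lemma in the presence of the drift term, where one extra derivative is absorbed in the composition estimates; this is a matter of careful accounting rather than a conceptual difficulty.
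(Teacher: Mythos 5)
There is a genuine gap, and it sits exactly at the step you describe as ``iterating on the subsequent intervals exactly as in the proof of Theorem \ref{thm:reg_sol_map}''. In the drift-free proof, the reason a \emph{uniform} step size $\tau_3$ and a \emph{uniform} ball radius $M_1$ work on every subinterval is that the starting value $y^{j-1,\xi}_{(j-1)\tau_3}$ enters the local fixed-point map only through $\sigma$, which is bounded with bounded derivatives; hence the estimates \eqref{estimatee} depend only on $A$ and $\Vert\xi\Vert_{\mathscr{D}_X^\beta}$, never on the size of the running solution. The drift $B$ is \emph{linear, hence unbounded}, so on the $j$-th subinterval your modified $\Gamma$ contains the term $\int B\big(y^{j-1}_{(j-1)\tau_3}+z_\tau,\xi_{\tau-r}\big)\,d\tau$, whose size is proportional to $\vert y^{j-1}_{(j-1)\tau_3}\vert$ --- precisely the quantity the theorem is supposed to bound. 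Consequently the invariant-ball/contraction conditions force either the ball radius or the step size on the $j$-th interval to depend on the running sup of the solution, not merely on $A$, $\Vert\xi\Vert$ and $\Vert B\Vert$ as you claim. If you try to close this by letting $\tau_3$ shrink with the running sup $R$, the smallness requirements (of the type $\tau_3^{\gamma-\beta}\cdot(\text{radius})\lesssim 1$ and $\Vert B\Vert\tau_3^{1-2\beta}R\lesssim 1$) make the number of subintervals $N=r/\tau_3$ grow like a power of $R$ with exponent larger than the per-step additive gain allows, and the resulting self-consistency inequality $R\lesssim \Vert\xi\Vert+\mathrm{poly}(A,\Vert\xi\Vert)\,R^{q}$ with $q>1$ does not close; at best one extracts an exponential-type bound, not the polynomial bound $Q(A,\Vert\xi\Vert)$ that the theorem asserts and that is later essential for the integrability hypotheses \eqref{eqn:inegrability_cond_stable}.

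This is exactly why the paper does not treat the drift inside the Picard map at all: it solves the drift-free equation \eqref{eqn:reduced_without_drift} (bounded $\sigma$, so all step sizes and bounds such as \eqref{NMK} and \eqref{in-b} are polynomial in $A$ and $\Vert\xi\Vert$ and independent of the running solution), and then removes the rough integral by a Doss--Sussmann-type conjugation: $y_t=\bar\varphi(0,t,\eta_t)$ where $\eta$ solves the ODE \eqref{DDec} with coefficient $[D\bar\varphi]^{-1}B(\bar\varphi,\xi_{\cdot-r})$. Linearity of $B$ is then exploited through Gr\"onwall's lemma in $dt$, which produces only a harmless factor $e^{c\Vert B\Vert r}$ and yields the sup bound \eqref{GC}; the controlled-path norm is recovered afterwards via \eqref{BGBN}. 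Your scheme, by lumping the unbounded drift into the same ``make the step small'' mechanism as the bounded diffusion, loses the structural fact that the drift integrates against Lebesgue measure of fixed total mass $r$, which is what makes the final bound polynomial. (A side remark: the $C^4_b$ assumption is not needed for a delayed Omega lemma with drift; in the paper it is used because the flow $\bar\varphi$ must be differentiated in its initial condition and its derivative flow inverted and estimated, which costs one more derivative of $\sigma$ than plain existence.)
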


\begin{proof}
 The idea is to give a representation of the solution to \eqref{drift} using the flow map of the respective equation omitting the drift term. Let $ \xi\in\mathscr{D}_{X}^\beta([-r,0],W) $ be fixed and consider the equation
\begin{align}\label{eqn:reduced_without_drift}
\begin{split}
&dy_{t} = \sigma(y_{t},\xi_{t-r})\, d \mathbf{X}_{t}\\
&y_{s}=x, \ \ \ 0\leqslant s\leqslant t\leqslant r.
\end{split}
\end{align}
Existence and uniqueness of this equation can be shown similarly to the usual delay case. We use $ \bar{\varphi}(s,t,x) $ to denote the solution of \eqref{eqn:reduced_without_drift} at time $t$ with initial condition $y_s = x$. From uniqueness of the solution, we have for every $ \tau\leqslant s\leqslant t $,
\begin{align*}
\bar{\varphi}(\tau,t,x)=\bar{\varphi}\big{(}s,t,\bar{\varphi}(\tau,s,x)\big{)}.
\end{align*}
As for usual rough differential equations \cite[Theorem 10.14]{FV10}, one can show that there is a polynomial $ P_1 $ such that
\begin{align}\label{NMK}
\sup_{x\in W, 0\leqslant s\leqslant t\leqslant r}\Vert\bar{\varphi}(s,t,x)-x\Vert\leqslant (t-s)^{\beta}P_1{(}A,\Vert\xi\Vert_{{\mathscr{D}_{X}^{\beta}([-r,0])}}{)}.
\end{align}
In addition, one can check that the solution is differentiable with respect to initial value and that its derivative is the matrix solution of the equation
\begin{align*}
D\bar{\varphi}(s,t,x)-I = \int_{s}^{t} \sigma_x(\bar{\varphi}(s,\tau,x),\xi_{\tau-r})D\bar{\varphi}(s,\tau,x)d \mathbf{X}_{\tau}.
\end{align*}
Let $ 0<t_{0}<r$ be fixed. For $0\leqslant \tau < \varsigma \leqslant t_{0}$, we define
\begin{align*}
\tilde{X}_{\tau}:=X_{t_{0}-\tau} ,\ \ \   \tilde{\mathbb{X}}_{\tau,\varsigma}:=-\mathbb{X}_{t_{0}-\varsigma , t_{0}-\tau},\ \ \ \tilde{\mathbb{X}}_{\tau,\varsigma}(-r):=-\mathbb{X}_{t_{0}-\varsigma , t_{0}-\tau}(-r).
\end{align*}
We say that $ \eta\in \tilde{\mathscr{D}}_{\tilde{X}}^{\beta}([a,b],W) $ if we have a decomposition of the form 
\begin{align*}
\eta_{s,t} = \eta^{\prime}_{t}\tilde{X}_{s,t}+\eta^{\#}_{s,t}
\end{align*}
where 
\begin{align*}
\Vert\eta^{\prime}\Vert_{\beta;[a,b]} < \infty \quad \text{and} \quad \sup_{s<t}\frac{\vert\eta^{\#}_{s,t}\vert}{(t-s)^{2\beta}}<\infty.
\end{align*}
Using the sewing lemma \cite[Lemma 4.2]{FH14} we can also define 
\begin{align*}
\int_{[a,b]}\eta_{\tau}d\tilde{\mathbf{X}}_{\tau} &:= \lim_{\vert\Pi\vert\rightarrow 0}\sum_{\Pi}\big{[}\eta_{\tau_{j+1}}\tilde{X}_{\tau_{j},\tau_{j+1}}+\eta^{\prime}_{\tau_{j+1}}\tilde{\mathbb{X}}_{\tau_{j},\tau_{j+1}}\big{]}\\
\int_{[a,b]}\eta_{\tau-r}d\tilde{\mathbf{X}}_{\tau} &:= \lim_{\vert\Pi\vert\rightarrow 0}\sum_{\Pi}\big{[}\eta_{\tau_{j+1}-r}\tilde{X}_{\tau_{j},\tau_{j+1}}+\eta^{\prime}_{\tau_{j+1}-r}\tilde{\mathbb{X}}_{\tau_{j},\tau_{j+1}}(-r)\big{]}.
\end{align*}
For $ \xi\in\mathscr{D}_{X}^\beta([a,b],W)$, it is straightforward to check that $ \tilde{\xi}_{\cdot} := \xi_{t_{0} - \cdot}\in\tilde{\mathscr{D}}_{\tilde{X}}^{\beta}([t_{0}-b,t_{0}-a],W)   $ and  that
\begin{align*}
\int_{[a,b]}\xi_{\tau}d\mathbf{X}_{\tau}=\int_{[t_{0}-b,t_{0}-a]}\tilde{\xi}_{\tau}d\tilde{\mathbf{X}}_{\tau}.
\end{align*}
For $ s_{0}\leqslant t_{0}\leqslant r $ and $ \tilde{\varphi}(s_{0},t,x) := \bar{\varphi}(s_{0},t_{0}-t,x) $ we consider the equation
\begin{align}\label{invers_}
\begin{split}
  dZ_{t} &= \sigma_x \big{(} \tilde{\varphi}(s_{0},t,x_{0}),\tilde{\xi}_{t-r}\big{)}Z_{t}d\tilde{\mathbf{X}}_{t}\\
  Z_{0} &= I,\ \ \ 0\leqslant t\leqslant t_{0}-s_{0}.
\end{split}
\end{align}
Then 
\begin{align*}
Z_{t_{0}-s_{0}}=[D\bar{\varphi}(s_{0},t_{0},x)]^{-1}.
\end{align*}
Thus by standard estimates for linear equations \cite[Theorem 10.53]{FV10}, we have a bound of the form
\begin{align}\label{in-b}
\sup_{s\leqslant t\leqslant r, x\in W} \Vert [D\bar{\varphi}(s,t,x)]^{-1}-I\Vert\leqslant M(t-s)^{\beta} P_2 \big{(}A,\Vert\xi\Vert_{{\mathscr{D}_{X}^{\beta}([-r,0])}}\big{)} \exp\big{(}(t-s) P_2 \big{(}A,\Vert\xi\Vert_{{\mathscr{D}_{X}^{\beta}([-r,0])}}\big{)}\big{)}
\end{align}
where $ M $ is just a general constant and $ P_2 $ is a polynomial. Now we consider the ODE
\begin{align*}
\begin{split}
  d \eta_{t} &= [D\bar{\varphi}(0,t,\eta_{t})]^{-1}B\big{(}\bar{\varphi}(0,t,\eta_{t}),\xi_{t-r}\big{)}dt\\
  \eta_{0} &= \xi_{0}.
\end{split}
\end{align*} 
Using the chain rule, it is straightforward to see that $ \bar{\varphi}(0,t,\eta_{t}) $ solves \eqref{drift}. Next, we choose $\tau > 0$ sufficiently small such that
\begin{align*}
M\tau^{\beta}P_2(A,\Vert\xi\Vert_{{\mathscr{D}_{X}^{\beta}([-r,0])}})\exp(\tau P_2(A,\Vert\xi\Vert_{{\mathscr{D}_{X}^{\beta}([-r,0])}}))\leqslant 1
\end{align*}
holds. Using some basic calculations, we can check that there is a polynomial $P_3$ such that
\begin{align}\label{akk}
\frac{r}{\tau} = P_3(A,\Vert\xi\Vert_{{\mathscr{D}_{X}^{\beta}([-r,0])}}).
\end{align}
Choosing $\tau$ smaller if necessary, we can assume that there is some $ n\in\mathbb{N} $ such that $ n\tau =r $. Define $I_{m} := [(m-1)\tau ,m\tau]$ for $1 \leq m\leqslant n $ and $ \eta^{0}_{0} := \xi_{0} $. Inductively, we define the equations
\begin{align}\label{DDec}
\begin{split}
  d\eta^{m}_{t} &= [D\bar{\varphi}_{x}((m-1)\tau,t,\eta^{m}_{t})]^{-1}B\big{(}\bar{\varphi}((m-1)\tau ,t,\eta^{m}_{t}),\xi_{t-r}\big{)}dt, \ \ t\in [(m-1)\tau ,m\tau] \\
  \eta^{m}_{(m-1)\tau} &= \bar{\varphi}((m-1)\tau ,\eta^{m-1}_{(m-1)\tau}).
\end{split}
\end{align} 
Again, it is not hard to see that
\begin{align*}
y_{t}=\bar{\varphi}((m-1)\tau, t, \eta^{m}_{t}), \  \ \ t\in [(m-1)\tau , m\tau]
\end{align*}
solves \eqref{drift}. From \eqref{in-b}, 
\begin{align*}
\Vert\eta^{m}_{t}\Vert-\Vert\eta^{m}_{(m-1)\tau}\Vert\leqslant 2\Vert B\Vert\int_{(m-1)\tau}^{t}\big{[}\Vert\bar{\varphi}((m-1)\tau, \varsigma,\eta^{m}_{\varsigma})\Vert + \Vert\xi_{\varsigma -r}\Vert\big{]} d\varsigma.
\end{align*}
By Gr\"onwall's lemma and \eqref{NMK}, we can deduce that there is for a constant $ M $ and polynomial $P_4 $ such that
\begin{align*}
\Vert\eta^{m}\Vert_{\infty;I_{m}}\leqslant\exp(2\Vert B\Vert\tau)\Vert\eta^{m}\Vert_{\infty;I_{m-1}} +M\big{[}\exp(2\Vert B\Vert\tau)-1\big{]}\big{[}\Vert\xi\Vert_{\infty}+P_4(A,\Vert\xi\Vert_{{\mathscr{D}_{X}^{\beta}([-r,0])}})\big{]}.
\end{align*}
Finally, from \eqref{NMK} and \eqref{akk}, for a polynomial $P_5$,
\begin{align}\label{GC}
\Vert y\Vert_{\infty;[0,r]}\leqslant P_5(A,\Vert\xi\Vert_{{\mathscr{D}_{X}^{\beta}([-r,0])}}).
\end{align}
Remember that
\begin{align*}
y_{s,t}=\int_{s}^{t} B(y_{\varsigma},\xi_{\varsigma -r})\, d\varsigma + \int_{s}^{t}\sigma(y_{\varsigma},\xi_{\varsigma -r})\, d \mathbf{X}_{\varsigma}.
\end{align*}
Using the standard estimate for the rough integral \cite[Theorem 4.10]{FH14} and \eqref{GC}, we obtain for $ 0\leqslant s<t\leqslant r $
\begin{align}\label{BGBN}
\Vert y\Vert_{\beta;[s,t]}+\Vert y^{\#}\Vert_{2\beta ;[s,t]}\leqslant P_6 (A,\Vert\xi\Vert_{{\mathscr{D}_{X}^{\beta}([-r,0])}})+(t-s)^{\gamma -\beta} P_7(A,\Vert\xi\Vert_{{\mathscr{D}_{X}^{\beta}([-r,0])}})[\Vert y\Vert_{\beta;[s,t]}+\Vert y^{\#}\Vert_{2\beta ;[s,t]}]
\end{align}
where $ P_6$ and $ P_7 $ are polynomials. Again, we can find a polynomial $ P_8 $ and $ \tau > 0$ such that
\begin{align*}
\frac{r}{\tau}=P_8(A,\Vert\xi\Vert_{{\mathscr{D}_{X}^{\beta}([-r,0])}}) \ \ \text{and} \ \ \ \tau^{\gamma -\beta}P_7(A,\Vert\xi\Vert_{{\mathscr{D}_{X}^{\beta}([-r,0])}})\leqslant \frac{1}{2}.
\end{align*}
Finally, from \eqref{BGBN} and subadditivity of the H\"older norm, we can deduce the existence of a polynomial $ Q $ such that
\begin{align}
\Vert y\Vert_{\mathscr{D}_{X}^{\beta}([0,r])}\leqslant  Q (A,\Vert\xi\Vert_{{\mathscr{D}_{X}^{\beta}([-r,0])}}).
\end{align}

\end{proof}

\begin{corollary}\label{corollary:results_linear_true}
 Under the same assumptions as in Theorem \ref{thm:SDDE_linear_drift}, the results of Theorem \ref{thm:reg_sol_map} and Proposition \ref{prop:linearization_eq} hold for equation \eqref{drift}, too.
\end{corollary}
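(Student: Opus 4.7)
The plan is to re-run the proofs of Theorem \ref{thm:reg_sol_map} and Proposition \ref{prop:linearization_eq}, treating the linear drift as a smooth, lower-order perturbation of the fixed-point operator used there. First, I would replace the operator $\Gamma$ from \eqref{map} by
\begin{align*}
\tilde\Gamma(\xi,y)(t) := \int_0^t B(\xi_0 + y_\tau, \xi_{\tau - r})\, d\tau + \int_0^t \sigma(\xi_0 + y_\tau, \xi_{\tau - r})\, d\mathbf{X}_\tau.
\end{align*}
The second summand was already handled in the proof of Theorem \ref{thm:reg_sol_map}. The new Bochner integral is linear in $(\xi,y)$ because $B$ is linear, and Lebesgue integration lifts a bounded path to a Lipschitz-in-time path, which sits inside $\mathscr{D}_{X}^\beta([0,t_0],W)$ with Gubinelli derivative zero and $2\beta$-H\"older remainder picking up a factor $t_0^{1-2\beta}$ (finite since $\beta<1/2$). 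As a bounded linear map between the relevant Banach spaces it is automatically of class $\mathscr{C}^\infty$, so the local regularity of $\tilde\Gamma$ is the same as that of $\Gamma$, and the delayed Omega lemma of Theorem \ref{omega} gives the required $\mathscr{C}^{n+\kappa(1-\eta)}$-behaviour.

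Next, I would rerun the estimates \eqref{estimatee}. The drift contributes an additional term of order $t_0^{1-\beta}\|B\|(1+\|\xi\|_{\mathscr{D}_X^\beta})(1+\|y\|_{\mathscr{D}_X^\beta})$ to both bounds, which is subleading in $t_0$ and merely forces $\tau_1,\tau_2,\tau_3$ to be chosen slightly smaller. With this modification, $\tilde\Gamma$ still maps $B\times B_1$ into $B_1$ and is a contraction in $y$ for fixed $\xi$, so the implicit function theorem delivers the desired local regularity on $[0,\tau_3]$. The inductive gluing over the intervals $[(j-1)\tau_3,j\tau_3]$ from the proof of Theorem \ref{thm:reg_sol_map} then proceeds verbatim, using the a priori bound of Theorem \ref{thm:SDDE_linear_drift} in place of Theorem \ref{thm:bound_rdde} to control the iteratively chosen initial conditions.

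For the linearization identity, I would repeat the proof of Proposition \ref{prop:linearization_eq} carrying along the drift integral. Exploiting linearity of $B$, its difference quotient is exactly
\begin{align*}
\frac{1}{z}\int_0^t \bigl[B(y^{\xi+z\tilde\xi}_\tau,\xi_{\tau-r}+z\tilde\xi_{\tau-r})-B(y^\xi_\tau,\xi_{\tau-r})\bigr]\,d\tau = \int_0^t B(M_\tau^z,\tilde\xi_{\tau-r})\,d\tau,
\end{align*}
where $M_\tau^z := z^{-1}(y^{\xi+z\tilde\xi}_\tau-y^\xi_\tau)$. By the regularity just established, $M^z\to Dy^\xi[\tilde\xi]$ in $\mathscr{D}_X^\beta$, so passing to the limit in the Bochner integral is routine and yields the additional drift term $\int_0^t B(Dy^\xi[\tilde\xi](\tau),\tilde\xi_{\tau-r})\,d\tau$ on the right-hand side of the analogue of \eqref{MKL}.

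The one bookkeeping point I expect to require genuine care is verifying that the constants governing $\tau_3$ in the contraction argument remain polynomial in $A$ and $\|\xi\|_{\mathscr{D}_X^\beta}$ after inclusion of the drift, so that the finite number $N=r/\tau_3$ of subintervals stays uniform in a neighbourhood of a fixed $\hat\xi$. Since the drift produces a Lipschitz-in-time contribution bounded in terms of the sup-norms of $y$ and $\xi$, and these are themselves controlled by the polynomial $Q$ of Theorem \ref{thm:SDDE_linear_drift}, no qualitatively new ingredient is needed beyond what is already available.
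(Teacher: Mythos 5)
Your argument is essentially sound, but it follows a genuinely different route from the paper. You re-run the fixed-point/implicit-function-theorem machinery of Theorem \ref{thm:reg_sol_map}, treating the Bochner integral of the linear drift as a bounded linear (hence $\mathscr{C}^\infty$) perturbation that only produces lower-order terms (positive powers of $t_0$, e.g.\ $t_0^{1-2\beta}$ in the remainder norm since $\beta<1/2$), and you then redo the difference-quotient argument of Proposition \ref{prop:linearization_eq}, where linearity of $B$ makes the drift's difference quotient exact; the extra bookkeeping you flag (keeping $\tau_3$ and $N=r/\tau_3$ uniform despite the unbounded drift acting on the growing offsets $y^{j-1,\xi}_{(j-1)\tau_3}$) is real but is indeed controlled by the a priori bound of Theorem \ref{thm:SDDE_linear_drift}, so the scheme closes. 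The paper instead avoids all of this by a reformulation: it adds $t\mapsto t$ as a smooth component of the rough path, writes \eqref{drift} as a driftless equation \eqref{eqn:reform_eq} with coefficient $\tilde{\sigma}=(B,\sigma)$, and handles the unboundedness of the linear $B$ by a cut-off — replacing $\tilde{\sigma}$ by a compactly supported modification on the region where solutions with initial data near a fixed $\xi$ live — after which Theorem \ref{thm:reg_sol_map} and Proposition \ref{prop:linearization_eq} apply verbatim. The paper's approach buys brevity and wholesale reuse of the driftless results; yours is more explicit and self-contained, avoids modifying the coefficient, and makes the quantitative dependence on $B$ visible, at the cost of repeating and perturbing all the estimates.
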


\begin{proof}
 We can rewrite the equation \eqref{drift} as
 \begin{align}\label{eqn:reform_eq}
\begin{split}
dy_{t} &= \tilde{\sigma}(y_{t},y_{t-r})d \tilde{\mathbf{X}}_{t}\\
y_{s} &= \xi_{s}, \ \ \ -r\leqslant s\leqslant 0
\end{split}
\end{align}
where $\tilde{\sigma} := (B,\sigma)$ and $\tilde{\mathbf{X}}$ is the delayed rough path obtained from $\mathbf{X}$ by including $t \mapsto t$ as a smooth component, cf. \cite[Section 9.4]{FV10}. Note that $\tilde{\sigma}$ has the same smoothness as $\sigma$. Fixing an initial condition $\xi$ and a neighbourhood around it, we can assume that $\tilde{\sigma}$ is bounded for these initial conditions by replacing the unbounded $\tilde{\sigma}$ by a version which is compactly supported in the region where the respective solutions take their values. Therefore, we can directly apply Theorem \ref{thm:reg_sol_map} and Proposition  \ref{prop:linearization_eq} to \eqref{eqn:reform_eq}.
\end{proof}

We finally give some bounds for the solution to the linearized equation. Since the proofs are a bit technical, we decided to put them in the appendix.

\begin{theorem}\label{THM}
Assume $ \sigma \in C^{3}_{b} $. Then the solution of \eqref{eqn:rough_delay} is differentiable and if $Dy^{\xi}[\tilde{\xi}]$ denotes the derivative at $\xi$ in the direction $\tilde{\xi}$, we have the bound
\begin{align}\label{second-estimate}
\big{\Vert} Dy^{\xi}[\tilde{\xi}]\big{\Vert}_{\mathscr{D}_{X}^{\beta}[0,r]}\leqslant\Vert\tilde{\xi}\Vert_{\mathscr{D}_{X}^{\beta}[-r,0]}\exp[Q(A,\Vert\xi\Vert_{\mathscr{D}_{X}^{\beta}[-r,0]})]
\end{align}
where $ Q $ is a polynomial and $ A = 1 + \Vert \mathbf{X} \Vert_{\gamma,[0,r]}$. If $ \sigma \in C^{4}_{b} $, we have the same result for equation \eqref{drift}.
\end{theorem}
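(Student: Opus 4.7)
The plan is to exploit the linear structure of the equation satisfied by $Dy^{\xi}[\tilde\xi]$, namely equation \eqref{MKL} from Proposition \ref{prop:linearization_eq}, and to obtain the bound \eqref{second-estimate} by iterating a local linear estimate on a number of sub-intervals that is polynomially controlled by $A$ and $\Vert\xi\Vert_{\mathscr{D}_X^{\beta}[-r,0]}$.

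First, I would set $z_t := Dy^{\xi}[\tilde\xi](t)$, $A_t := \sigma_{x}(y^{\xi}_t,\xi_{t-r})$ and $B_t := \sigma_{y}(y^{\xi}_t,\xi_{t-r})\tilde\xi_{t-r}$, so that \eqref{MKL} reads $z_t - \tilde\xi_0 = \int_0^t (A_\tau z_\tau + B_\tau)\, d\mathbf{X}_\tau$ with boundary data $z_t = \tilde\xi_t$ on $[-r,0]$. Using the $\mathscr{C}^{3}_b$ assumption on $\sigma$, together with standard composition estimates for controlled paths (as in \cite[Lemma 7.3]{FH14} combined with the identification of delayed controlled paths with ordinary ones based on $(X,X_{\cdot-r})$ used in the proof of Theorem \ref{omega}), the delayed Gubinelli derivatives of $A$ and $B$ can be read off from those of $y^{\xi}$ and $\xi$, and their $\mathcal{D}_X^{\beta}$-norms are bounded by polynomial expressions $R_1(A,\Vert\xi\Vert_{\mathscr{D}_X^{\beta}[-r,0]})$ and $R_2(A,\Vert\xi\Vert_{\mathscr{D}_X^{\beta}[-r,0]})\Vert\tilde\xi\Vert_{\mathscr{D}_X^{\beta}[-r,0]}$, respectively, thanks to Theorem \ref{thm:bound_rdde}.

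Next, I would apply the sewing lemma (Remark \ref{sewing 2}) and the standard estimate for the rough integral against $\mathbf{X}$ to the linear equation on a sub-interval $[s,t]\subseteq[0,r]$ of length $\leqslant\tau$. As in \eqref{BGBN}, this produces an inequality of the form
\begin{align*}
\Vert z\Vert_{\mathscr{D}_X^{\beta}[s,t]}\leqslant C_0\,\bigl(\vert z_s\vert+\vert\tilde\xi_{s-r}\vert+\Vert B\Vert_{\mathcal{D}_X^{\beta}[s,t]}\bigr)+(t-s)^{\gamma-\beta}R_3\,\Vert z\Vert_{\mathscr{D}_X^{\beta}[s,t]},
\end{align*}
where $C_0$ depends only on $\beta,\gamma$ and $R_3=R_3(A,\Vert\xi\Vert_{\mathscr{D}_X^{\beta}[-r,0]})$ is polynomial. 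Choosing $\tau$ so that $\tau^{\gamma-\beta}R_3\leqslant 1/2$ and $N:=r/\tau\in\mathbb{N}$, the ratio $N=r/\tau$ is itself polynomial in $A$ and $\Vert\xi\Vert_{\mathscr{D}_X^{\beta}[-r,0]}$, and on each interval $I_k=[(k-1)\tau,k\tau]$ we obtain
\begin{align*}
\Vert z\Vert_{\mathscr{D}_X^{\beta}(I_k)}\leqslant K\bigl(\Vert z\Vert_{\mathscr{D}_X^{\beta}(I_{k-1})}+\Vert\tilde\xi\Vert_{\mathscr{D}_X^{\beta}[-r,0]}\bigr)
\end{align*}
for a constant $K=K(A,\Vert\xi\Vert_{\mathscr{D}_X^{\beta}[-r,0]})$ of polynomial growth.

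Iterating this recursion over $k=1,\dots,N$ yields $\Vert z\Vert_{\mathscr{D}_X^{\beta}[0,r]}\leqslant (K+1)^{N}\Vert\tilde\xi\Vert_{\mathscr{D}_X^{\beta}[-r,0]}$, and since $N$ and $K$ are polynomial in $(A,\Vert\xi\Vert_{\mathscr{D}_X^{\beta}[-r,0]})$, the right-hand side is bounded by $\Vert\tilde\xi\Vert_{\mathscr{D}_X^{\beta}[-r,0]}\exp[Q(A,\Vert\xi\Vert_{\mathscr{D}_X^{\beta}[-r,0]})]$ for some polynomial $Q$, proving \eqref{second-estimate}. The case $\sigma\in C^{4}_b$ with a linear drift follows verbatim, after rewriting \eqref{drift} as \eqref{eqn:reform_eq} as in the proof of Corollary \ref{corollary:results_linear_true} and observing that the time component contributes only a smooth factor that is harmless for the same estimates. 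The main obstacle I expect is bookkeeping in Step 2: one must verify that the Gubinelli derivatives of $A$ and $B$ depend on $y^{\xi}$ in a controlled way and that the polynomial bounds on their $\mathcal{D}_X^{\beta}$-norms only pick up $\Vert\xi\Vert_{\mathscr{D}_X^{\beta}}$ (and $A$), so that the constant $K$ in the final recursion does not involve $\Vert z\Vert$ itself; this is where the linearity of the equation is crucial.
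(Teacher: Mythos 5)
Your proposal is correct and follows essentially the same route as the paper's appendix proof: exploit the linearity of equation \eqref{MKL}, bound the delayed controlled-path decomposition of the integrand polynomially in $(A,\Vert\xi\Vert_{\mathscr{D}_X^{\beta}[-r,0]})$ via Theorem \ref{thm:bound_rdde}, then iterate a contraction estimate on subintervals of length $\tau$ with $\tau^{\gamma-\beta}\cdot(\text{polynomial})\leqslant 1/2$, the number $N=r/\tau$ of intervals being polynomial, which yields the $\exp[Q(A,\Vert\xi\Vert)]$ bound. The only cosmetic differences are that the paper writes out the Taylor expansion of the integrand explicitly rather than invoking a composition lemma, and in the drift case the a priori bound should be taken from Theorem \ref{thm:SDDE_linear_drift} rather than Theorem \ref{thm:bound_rdde}.
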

\begin{proof}
Cf. appendix.
\end{proof}

\begin{theorem}\label{TTH}
Under the same assumptions as in Theorem \ref{THM},
\begin{align}\label{derivative-A}
\big{\Vert} Dy^{\xi}[\eta] - Dy^{\tilde{\xi}}[\eta]\big{\Vert}_{\mathscr{D}_{X}^{\beta}[0,r]}\leqslant\Vert\xi - \tilde{\xi}\Vert_{\mathscr{D}_{X}^{\beta}[-r,0]} \Vert\eta\Vert_{\mathscr{D}_{X}^{\beta}[-r,0]} \exp\big{[} P(A,\Vert\xi\Vert_{\mathscr{D}_{X}^{\beta}[-r,0]},\Vert\xi-\tilde{\xi}\Vert_{\mathscr{D}_{X}^{\beta}[-r,0]})\big{]}
\end{align}
for a polynomial $P$.
\end{theorem}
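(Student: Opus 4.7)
The plan is to exploit the linear rough delay equation satisfied by the derivative $Dy^\xi[\eta]$ (Proposition \ref{prop:linearization_eq}) and to study the equation satisfied by the difference $W_t := Dy^\xi[\eta](t) - Dy^{\tilde{\xi}}[\eta](t)$. Subtracting the two linearizations written at $\xi$ and $\tilde{\xi}$, and adding and subtracting the cross term $\sigma_x(y^{\tilde{\xi}}_\tau, \tilde{\xi}_{\tau-r}) Dy^\xi[\eta](\tau)$, one rewrites the equation for $W$ as a linear, non-autonomous, inhomogeneous rough equation with time-dependent coefficients
\[
W_t = \int_0^t \sigma_x(y^{\tilde{\xi}}_\tau, \tilde{\xi}_{\tau-r})\, W_\tau\, d\mathbf{X}_\tau + \int_0^t F_\tau \, d\mathbf{X}_\tau, \qquad W|_{[-r,0]} \equiv 0,
\]
with forcing term
\[
F_\tau := \bigl[\sigma_x(y^\xi_\tau, \xi_{\tau-r}) - \sigma_x(y^{\tilde{\xi}}_\tau, \tilde{\xi}_{\tau-r})\bigr] Dy^\xi[\eta](\tau) + \bigl[\sigma_y(y^\xi_\tau, \xi_{\tau-r}) - \sigma_y(y^{\tilde{\xi}}_\tau, \tilde{\xi}_{\tau-r})\bigr] \eta_{\tau-r}.
\]
Note that the unknown $W$ itself appears only at the current time, so this is a linear (non-delay) RDE in $W$ with delayed controlled-path coefficients and a delayed controlled-path forcing.

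The next step is to produce a controlled-path estimate for $F$. Writing $\|\xi\| := \|\xi\|_{\mathscr{D}_X^\beta[-r,0]}$, the $C_b^3$ assumption on $\sigma$ together with the product and composition rules for delayed controlled paths yields a bound of the form
\[
\|F\|_{\mathcal{D}_X^\beta[0,r]} \leq P_1(A, \|\xi\|, \|\tilde{\xi}\|)\cdot \|\eta\|_{\mathscr{D}_X^\beta[-r,0]} \cdot \bigl(\|y^\xi - y^{\tilde{\xi}}\|_{\mathscr{D}_X^\beta[0,r]} + \|\xi - \tilde{\xi}\|_{\mathscr{D}_X^\beta[-r,0]}\bigr)
\]
for a polynomial $P_1$, where $\|Dy^\xi[\eta]\|_{\mathscr{D}_X^\beta[0,r]}$ is absorbed using Theorem \ref{THM}, and $\|y^\xi\|$, $\|y^{\tilde{\xi}}\|$ using Theorem \ref{thm:bound_rdde}. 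To control $\|y^\xi - y^{\tilde{\xi}}\|$ one uses the fundamental theorem of calculus in $\xi$,
\[
y^\xi - y^{\tilde{\xi}} = \int_0^1 Dy^{\lambda \xi + (1-\lambda)\tilde{\xi}}[\xi - \tilde{\xi}]\, d\lambda,
\]
and invokes Theorem \ref{THM} to obtain
\[
\|y^\xi - y^{\tilde{\xi}}\|_{\mathscr{D}_X^\beta[0,r]} \leq \|\xi - \tilde{\xi}\|_{\mathscr{D}_X^\beta[-r,0]} \exp\!\bigl[Q(A, \|\xi\| + \|\xi - \tilde{\xi}\|)\bigr].
\]

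Finally, a standard Gr\"onwall-type estimate for linear rough differential equations (as in \cite[Theorem 10.53]{FV10}) applied on a subinterval $[j\tau, (j+1)\tau]$ with $\tau > 0$ chosen sufficiently small --- precisely as in the proof of Theorem \ref{thm:reg_sol_map}, so that $N := r/\tau$ is a polynomial $P_3(A, \|\xi\|, \|\tilde{\xi}\|)$ --- yields a local estimate
\[
\|W\|_{\mathscr{D}_X^\beta[j\tau, (j+1)\tau]} \leq \bigl(|W_{j\tau}| + \|F\|_{\mathcal{D}_X^\beta[j\tau, (j+1)\tau]}\bigr) \exp\!\bigl[P_2(A, \|\tilde{\xi}\|)\bigr].
\]
Iterating over the $N$ intervals, propagating the inductive bound on $|W_{j\tau}|$ (starting from $W_0 = 0$) and using subadditivity of the H\"older norm, one collects the estimate on all of $[0,r]$ and arrives at \eqref{derivative-A} after combining with the bound on $\|F\|$ and the estimate on $\|y^\xi - y^{\tilde{\xi}}\|$. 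The main technical obstacle lies in the controlled-path bookkeeping: one must verify carefully that the Gubinelli derivatives and remainders of the differences $\sigma_x(y^\xi, \xi) - \sigma_x(y^{\tilde{\xi}}, \tilde{\xi})$ and $\sigma_y(y^\xi, \xi) - \sigma_y(y^{\tilde{\xi}}, \tilde{\xi})$ admit the asserted polynomial-Lipschitz bounds in the delayed setting, and to ensure that the polynomial and exponential factors produced by the iteration over the $N$ subintervals can be reorganized into the single polynomial exponent $P(A, \|\xi\|, \|\xi - \tilde{\xi}\|)$ appearing in the statement.
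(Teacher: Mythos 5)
Your proposal is correct and follows essentially the same route as the paper's proof: write the difference $W = Dy^{\xi}[\eta]-Dy^{\tilde{\xi}}[\eta]$ as a linear inhomogeneous rough equation (the paper keeps the coefficient $\sigma_x(y^{\xi},\xi_{\cdot-r})$ rather than $\sigma_x(y^{\tilde{\xi}},\tilde{\xi}_{\cdot-r})$, a purely symmetric choice), bound the forcing term via controlled-path expansions of $\sigma_x(y^{\xi},\xi_{\cdot-r})-\sigma_x(y^{\tilde{\xi}},\tilde{\xi}_{\cdot-r})$ together with the Lipschitz dependence of $y^{\xi}$ on $\xi$ coming from Theorem \ref{THM} (which you make explicit through the fundamental theorem of calculus), and then absorb the homogeneous part on small subintervals of length $\tau$ with $\tau^{\gamma-\beta}$ times a polynomial at most $\tfrac12$, iterating over $r/\tau$ intervals exactly as in the paper. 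The only cosmetic difference is that the polynomial prefactors and the exponential from Theorem \ref{THM} must be merged into the single exponent $\exp[P(A,\Vert\xi\Vert,\Vert\xi-\tilde{\xi}\Vert)]$, which is harmless.
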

 \begin{proof}
 Cf. appendix.
 \end{proof}
 
  \begin{remark}
Note that since $ P $ is a polynomial, we can find a polynomial $ \tilde{P} $ and an increasing function $ \tilde{Q} $ such that also
 \begin{align}\label{PO}
    \begin{split}
    \big{\Vert} Dy^{\xi}[\eta] - Dy^{\tilde{\xi}}[\eta]\big{\Vert}_{\mathscr{D}_{X}^{\beta}[0,r]} &\leqslant \Vert\xi - \tilde{\xi}\Vert_{\mathscr{D}_{X}^{\beta}[-r,0]} \Vert\eta\Vert_{\mathscr{D}_{X}^{\beta}[-r,0]} \exp\big{[} \tilde{P}(A,\Vert\xi\Vert_{\mathscr{D}_{X}^{\beta}[-r,0]})\big{]} \\
    &\qquad \times \exp\big{[} \tilde{Q}(\Vert\xi-\tilde{\xi}\Vert_{\mathscr{D}_{X}^{\beta}[-r,0]})\big{]}
    \end{split}
\end{align}
holds.
 \end{remark}
 
 \begin{remark}\label{remark:additional_drift}
 If $f \colon W^2 \to W$ has the same smoothness as $\sigma$ and is bounded with bounded derivatives, the equation
 \begin{align}\label{eqn:general_drift}
 \begin{split}
  dy_{t} &= B(y_t,y_{t-r})\, dt + f(y_t,y_{t-r})\, dt + \sigma (y_{t},y_{t-r})\, d \mathbf{X}_{t}\\
  y_{s} &= \xi_{s}, \ \ \ -r\leqslant s\leqslant 0
\end{split}
\end{align}
with initial condition $\xi \in \mathscr{D}_{X}^\beta([-r , 0],W)$ has a unique solution and all results in this section hold for \eqref{eqn:general_drift}, too, where the constants will now depend on $f$ as well. As in the proof of Corollary \ref{corollary:results_linear_true}, this just follows by including $t \mapsto t$ as a smooth component of $\mathbf{X}$ and viewing $(f,\sigma)$ as an element in $C^4_b(W^2,L(\R \oplus U,W))$.
\end{remark}

\section{Invariant manifolds for random rough delay equations}\label{sec:inv_mfd_rrde}

Let $B \colon W^2 \to W$ be a linear map and $\sigma \in C^3_b$ resp. $\sigma \in C^4_b$ in the case when $B \neq 0$. Our goal is to study invariant manifolds for the solution to stochastic delay differential equations of the form
\begin{align}\label{eqn:SDDE_drift}
 dy_t = B(y_t,y_{t-r})\, dt + \sigma(y_t,y_{t-r})\, \star dB_t(\omega)
\end{align}
where $\star dB(\omega)$ can be either the It\=o- or the Stratonovich differential. As already pointed out in \cite[Section 2]{GRS19}, it is equivalent to study the random rough delay equation
\begin{align}\label{eqn:random_rough_delay}
 dy_t = B(y_t,y_{t-r})\, dt + \sigma(y_t,y_{t-r})\, d\mathbf{X}_t(\omega)
\end{align}
where $\mathbf{X}$ is either $\mathbf{B}^{\mathrm{It\bar{o}}}$ or $\mathbf{B}^{\mathrm{Strat}}$, defined, using the It\=o integral, as
\begin{align*}
 \mathbf{B}_{s,t}^{\text{It\=o}} = \left( B_{s,t}, \mathbb{B}_{s,t}^{\text{It\=o}}, \mathbb{B}_{s,t}^{\text{It\=o}}(-r) \right) := \left(B_t - B_s, \int_s^t (B_u - B_s)\, \otimes dB_u, \int_s^t (B_{u - r} - B_{s - r})\, \otimes d B_u \right)
\end{align*}
resp. 
\begin{align*}
  \mathbf{B}_{s,t}^{\text{Strat}} =  \left( B_{s,t}, \mathbb{B}_{s,t}^{\text{It\=o}} + \frac{1}{2}(t-s) I_d, \mathbb{B}_{s,t}^{\text{It\=o}}(-r) \right).
 \end{align*}
 Recall that we could also add a smooth drift term to \eqref{eqn:random_rough_delay} as explained in Remark \ref{remark:additional_drift}, but we will not do so in the sequel for the sake of clarity.

 Using the same cut-off argument as in the proof to Corollary \ref{corollary:results_linear_true}, we can deduce from \cite[Theorem 1.13]{GRS19} that the solution to \eqref{eqn:random_rough_delay} induces a semi-flow $\phi$ on the spaces of controlled paths. From \cite[Theorem 3.7]{GRS19}, we can assume that there is an ergodic metric dynamical system $(\Omega,\mathcal{F},\P,(\theta_t)_{t \in \R})$ on which $\mathbf{B}^{\mathrm{It\bar{o}}}$ and $\mathbf{B}^{\mathrm{Strat}}$ are defined and satisfy the cocyle property. More generally, from now on, we will consider an arbitrary \emph{delayed $\gamma$-rough path cocycle $\mathbf{X}$} which drives the equation \eqref{eqn:random_rough_delay}, cf. \cite[Definition 3.1]{GRS19}. With \cite[Theorem 3.12]{GRS19}, we can deduce that $\varphi(n, \omega, \cdot) := \phi(0,nr,\omega, \cdot)$ is a continuous map 
 \begin{align*}
\varphi(n, \omega, \cdot) \colon \mathscr{D}_{X(\omega)}^{\alpha,\beta}([-r , 0],W) \to \mathscr{D}_{X(\theta_{nr} \omega)}^{\alpha,\beta}([-r , 0],W)
\end{align*}
satisfying the cocycle property
\begin{align}
\varphi(n+m , \omega, \cdot) = \varphi(n, \theta_{mr} \omega, \cdot) \circ \varphi(m, \omega, \cdot)
\end{align}
for every $n, m \in \mathbb{N}_0$ with parameters $\frac{1}{3} < \alpha < \beta < \frac{1}{2}$. From Corollary \ref{corollary:results_linear_true}, the cocycle is differentiable. Set $ \theta^{n} := \theta_{nr} $, $\theta := \theta^1$ and assume that 
\begin{align}\label{eqn:alpha_beta}
\frac{(1-\alpha)(\frac{1}{2}-\beta)}{(1-\beta)(1-2 \alpha)}<\beta -\alpha.
\end{align}
Then by \cite[Proposition 3.15]{GRS19}, $ \lbrace\mathscr{D}_{X(\omega)}^{\alpha ,\beta}([-r,0],W))\rbrace_{\omega\in\Omega}$ constitutes a measurable field of Banach spaces, and the cocycle $\varphi$ defined on the discrete metric dynamical system $(\Omega,\mathcal{F},\P,\theta)$ acts on it, cf. \cite[Theorem 3.17]{GRS19}.

\subsection{Random fixed points and formulation of the main theorems}

In order to deduce the existence of invariant manifolds, we aim to linearize the equation \eqref{eqn:random_rough_delay} around random fixed points which we define now.

\begin{definition}\label{def:stat_traj}
   Let $\varphi$ be a cocycle defined on a metric dynamical system $(\Omega,\mathcal{F},\P,\theta)$ acting on a measurable field of Banach spaces $\{E_{\omega}\}_{\omega \in \Omega}$.  A map $Y:\Omega\longrightarrow \prod_{\omega\in\Omega}E_{\omega}$ is called \emph{stationary trajectory} if the following properties are satisfied:
\begin{itemize}
\item[(i)]$Y_{\omega}\in E_{\omega}$,
\item[(ii)]$\varphi(n, \omega,Y_{\omega}) = Y_{\theta^{n}\omega}$ and
\item[(iii)]$\omega\rightarrow\Vert Y_{\omega}\Vert_{E_{\omega}} $ is measurable.
\end{itemize}
\end{definition}

We aim to apply the Multiplicative Ergodic Theorem in \cite{GR19} to the linearization of \eqref{eqn:random_rough_delay} around a random fixed point. The next lemma gives a sufficient condition under which this can be done.

\begin{lemma}\label{lemma:suff_cond_MET}
    Assume that the cocycle induced by \eqref{eqn:random_rough_delay} admits a stationary trajectory $Y$ and that
\begin{align*}
Q(A_{\omega},\Vert Y_{\omega}\Vert)\in L^{1}(\Omega)
\end{align*}
holds for the polynomial $Q$ obtained in Theorem \ref{THM} where $ A_{\omega} = 1 + \Vert \mathbf{X}(\omega) \Vert_{\gamma,[0,r]}$. Then $ \psi^{n}_{\omega} := D_{Y_{\omega}}\varphi(n,\omega,\cdot) $ defines a compact linear cocycle acting on the measurable field of Banach spaces $ \lbrace\mathscr{D}_{X(\omega)}^{\alpha ,\beta}([-r,0],W))\rbrace_{\omega\in\Omega}$ and the semi-invertible Mutliplicative Ergodic Theorem \cite[Theorem 1.20]{GR19} holds true.
 \end{lemma}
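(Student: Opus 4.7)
The plan is to verify the four hypotheses of the semi-invertible MET \cite[Theorem 1.20]{GR19}: linearity and the cocycle property of $\psi$, log-integrability of $\|\psi^1_\omega\|_{\mathrm{op}}$, compactness of each fiber map, and the measurability condition \eqref{eqn:measurability_cocycle}. The ambient ergodic metric dynamical system $(\Omega,\mathcal{F},\P,\theta)$ and the measurable field of Banach spaces $\{\mathscr{D}_{X(\omega)}^{\alpha,\beta}([-r,0],W)\}_{\omega\in\Omega}$ are already in place by the setup preceding the lemma.

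Linearity and the cocycle identity are essentially formal: $\psi^1_\omega$ is linear and bounded as a Fr\'echet derivative, and differentiating
\[
\varphi(n+m,\omega,\cdot)=\varphi(n,\theta^m\omega,\cdot)\circ\varphi(m,\omega,\cdot)
\]
at $Y_\omega$, using the stationarity identity $\varphi(m,\omega,Y_\omega)=Y_{\theta^m\omega}$ and the chain rule (which is legitimate by Corollary \ref{corollary:results_linear_true}), yields
\[
\psi^{n+m}_\omega = \psi^{n}_{\theta^m\omega}\circ\psi^{m}_\omega.
\]
For log-integrability, I would evaluate Theorem \ref{THM} at $\xi=Y_\omega$ to obtain $\|\psi^1_\omega\|_{\mathrm{op}}\leq\exp[Q(A_\omega,\|Y_\omega\|)]$, so $\log^+\|\psi^1_\omega\|_{\mathrm{op}}\leq Q(A_\omega,\|Y_\omega\|)\in L^1(\P)$ by hypothesis. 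A small point: the estimate in Theorem \ref{THM} is formulated for initial conditions in the stronger space $\mathscr{D}_X^\beta$, but since $\mathscr{D}_X^\beta$ is dense in $\mathscr{D}_X^{\alpha,\beta}$ by definition, the bound extends to $\psi^1_\omega$ regarded as an operator on $\mathscr{D}_X^{\alpha,\beta}$.

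For compactness, the key observation is that even if $\tilde\xi\in\mathscr{D}_{X(\omega)}^{\alpha,\beta}([-r,0])$ has only $\alpha$-regularity, the derivative $\psi^1_\omega(\tilde\xi)$ is obtained on $[0,r]$ from a rough integral in the sense of Proposition \ref{prop:linearization_eq}, so it automatically lies in the strictly smoother space $\mathscr{D}_{X(\theta\omega)}^{\beta}([-r,0])$, with uniform bound coming from Theorem \ref{THM}. Hence $\psi^1_\omega$ factors as a bounded map $\mathscr{D}_X^{\alpha,\beta}\to\mathscr{D}_X^\beta$ composed with the canonical inclusion $\mathscr{D}_X^{\beta}\hookrightarrow\mathscr{D}_X^{\alpha,\beta}$. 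Since $\alpha<\beta$, the latter inclusion is compact by an Arzel\`a--Ascoli type argument carried out in \cite[Section 3]{GRS19}, so $\psi^1_\omega$ is compact as a map $E_\omega\to E_{\theta\omega}$, and the cocycle identity propagates compactness to every $\psi^n_\omega$.

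Finally, for the measurability requirement, I would express $\psi^n_\omega g(\omega)$ as the pointwise limit of the difference quotients $z^{-1}[\varphi(n,\omega,Y_\omega+z\,g(\omega))-\varphi(n,\omega,Y_\omega)]$ and invoke the measurability of the nonlinear cocycle $\varphi$ from \cite[Theorem 3.17]{GRS19} together with measurability of $\omega\mapsto Y_\omega$. The main obstacle I anticipate is precisely this step: one must pick a countable generating family $\Delta_0$ compatibly across the fibers $\mathscr{D}_{X(\omega)}^{\alpha,\beta}$ and confirm that Fr\'echet differentiation interacts well with the measurable field structure. The separability of each fiber \cite[Lemma 3.9]{GRS19} and the construction of the field in \cite{GRS19} should reduce this to a routine verification.
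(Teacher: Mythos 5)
Your overall architecture coincides with the paper's: the cocycle property of $\psi$ is checked formally via the chain rule at the stationary trajectory, $\log^+\|\psi^1_\omega\|$ is controlled by $Q(A_\omega,\|Y_\omega\|)$ through Theorem \ref{THM} and the integrability hypothesis, and compactness is obtained exactly as the paper does (by citation of \cite[Proposition 1.12]{GRS19}): the linearized solution map smooths $\mathscr{D}^{\alpha,\beta}_{X(\omega)}$ data into $\mathscr{D}^{\beta}_{X(\theta\omega)}$ and one composes with the compact embedding $\mathscr{D}^{\beta}\hookrightarrow\mathscr{D}^{\alpha,\beta}$. Two caveats. First, a minor one: transferring the bound of Theorem \ref{THM} to an operator bound on $E_\omega=\mathscr{D}^{\alpha,\beta}_{X(\omega)}$ is not a pure density argument, because the estimate \eqref{second-estimate} is stated in $\beta$-norms while the norm of $E_\omega$ is the $\alpha$-norm; a $\beta$-norm bound on a dense subspace does not extend to an $\alpha$-norm bound. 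The correct (and intended) reading is to apply the same estimate with the exponent $\alpha$ in place of $\beta$ (legitimate since $1/3<\alpha<\gamma$), and then extend by continuity to the closure $\mathscr{D}^{\alpha,\beta}$.

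Second, and more substantively, your measurability step diverges from the paper and is the weakest point of the plan. What \cite[Theorem 1.20]{GR19} requires is \cite[Assumption 1.1]{GR19}, which the paper explicitly notes is \emph{stronger} than the bare condition \eqref{eqn:measurability_cocycle}; the paper verifies it by using that $\psi^n_\omega\tilde\xi$ solves the non-autonomous \emph{linear} delay equation \eqref{MKL} (Proposition \ref{prop:linearization_eq}, Corollary \ref{corollary:results_linear_true}) with coefficients depending measurably on $\omega$ through $Y_\omega$ and $\mathbf{X}(\omega)$, and then repeating the argument of \cite[Theorem 3.17]{GRS19} for this linear equation. Your difference-quotient approach (i) only targets measurability of $\omega\mapsto\|\psi^n_\omega g(\omega)\|$, not the full Assumption 1.1, and (ii) needs measurability of $\omega\mapsto\varphi(n,\omega,Y_\omega+z\,g(\omega))$, where $Y_\omega+z\,g(\omega)$ is in general \emph{not} a section in $\Delta$, so \cite[Theorem 3.17]{GRS19} does not apply off the shelf; one would first have to extend that theorem to arbitrary measurable selections by approximation through $\Delta_0$ and continuity. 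You correctly anticipate this as the main obstacle, but as written it remains a gap; the paper's route through the linear variational equation avoids it and is the argument you should adopt.
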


 \begin{proof}
 It is straightforward to check that $\psi$ satisfies the cocycle property. We need to verify \cite[Assumption 1.1]{GR19} which also implies the measurability condition \eqref{eqn:measurability_cocycle}. The proof of \cite[Assumption 1.1]{GR19} is very similar to the proof of \cite[Theorem 3.17]{GRS19} using that $\psi$ solves a (non-autnonomous) linear delay equation, cf. Proposition \ref{prop:linearization_eq} resp. Corollary \ref{corollary:results_linear_true}, so we decided to omit it here. Compactness follows as in the proof of \cite[Proposition 1.12]{GRS19}. From our assumption and Theorem \ref{THM}, it follows that $\log^+ \|\psi^1 \|$ is integrable. Therefore, all conditions of  \cite[Theorem 1.20]{GR19} are indeed satisfied. 
 \end{proof}
 
 From now on, we assume that the conditions of Lemma \ref{lemma:suff_cond_MET} are satisfied. Let $\tilde{\Omega}$ denote the $\theta$-invariant set of full measure provided in  \cite[Theorem 1.20]{GR19}.

 \begin{definition}\label{stable-dfn}
    Let $\lbrace ...<\mu_{j} < \mu_{j-1}<...<\mu_{1} \rbrace \in [-\infty,\infty)$ be the Lyapounov spectrum of $\psi$ provided by the MET \cite[Theorem 4.17]{GRS19} and let $\{H^i_{\omega}\}_{i \in \N}$ be the fast growing subspaces provided by the semi-invertible MET \cite[Theorem 1.20]{GR19}. Recall the splitting
    \begin{align*}
     \mathscr{D}_{X(\omega)}^{\alpha ,\beta}([-r,0],W)) = H^1_{\omega} \oplus \cdots \oplus H^n_{\omega} \oplus F_{\mu_{n+1}}(\omega)
    \end{align*}
    for every $n \in \N_0$ and $\omega \in \tilde{\Omega}$ with $F_{\mu}(\omega)$ defined as in \cite[Theorem 4.17]{GRS19}. Set $ \mu_{j_{0}} := \max \lbrace \mu_{j} : \mu_{j} < 0 \rbrace $ and $ \mu_{j_{0}} := -\infty $ if all $ \mu_{j} $ for which $\mu_j \neq -\infty$ are nonnegative. We define the \emph{stable subspace}
\begin{align*}
S_{\omega} := F_{\mu_{j_{0}}}(\omega)
\end{align*}
for $\omega \in \tilde{\Omega}$. Similarly, if $ \mu_{1}>0 $, set $k_{0} := \min \lbrace k :\mu_{k} > 0\rbrace $ and define the \emph{unstable subspace}
\begin{align*}
{U}_{\omega}:=\oplus_{1\leqslant i\leqslant k_{0}}H^{i}_{\omega}
\end{align*}
for $\omega \in \tilde{\Omega}$. If $ \mu_{1} \leq 0 $, we set $U_{\omega} := \{0\}$.

\end{definition}

From both METs \cite[Theorem 4.17]{GRS19} and  \cite[Theorem 1.20]{GR19}, we know that 
\begin{align*}
 \operatorname{dim}[\mathscr{D}_{X(\omega)}^{\alpha ,\beta}([-r,0],W)) / S_{\omega}] < \infty \quad \text{and} \quad \operatorname{dim}[U_{\omega}] < \infty
\end{align*}
for every $\omega \in \tilde{\Omega}$ and that the dimension does not depend on $\omega$. Note also that
\begin{align*}
 \mathscr{D}_{X(\omega)}^{\alpha ,\beta}([-r,0],W)) = U_{\omega} \oplus S_{\omega}
\end{align*}
in the case where all Lyapounov exponents are nonzero.

Now we are ready to state our main results of this section. Note that they are basically reformulations of the abstract stable and unstable manifold theorems in \cite{GR19}, but we decided to give a full statement here for the readers convencience. We start with the stable case.

\begin{theorem}[Local stable manifolds]\label{thm:stable_manifold}
Let $\mathbf{X}$ be a delayed $\gamma$-rough path cocycle defined on an ergodic metric dynamical system $(\Omega,\mathcal{F},\P,(\theta_t)_{t \in \R})$ and let $\frac{1}{3} < \alpha < \beta < \gamma < \frac{1}{2}$ be such that \eqref{eqn:alpha_beta} holds. Assume $\sigma\in C^{3}_{b}$ resp. $\sigma\in C^{4}_{b}$ in the case $B \neq 0$. Assume also that the cocycle $\varphi$ induced by \eqref{eqn:random_rough_delay} admits a stationary trajectory  $Y$ for which
\begin{align}\label{eqn:inegrability_cond_stable}
\tilde{P}(A_{\omega}, \Vert Y_{\omega}\Vert)\in L^{1}(\Omega) \ \ \text{and}\ \ Q(A_{\omega},\Vert Y_{\omega}\Vert)\in L^{1}(\Omega)
\end{align}
    where $A_{\omega} = 1 + \Vert \mathbf{X}(\omega) \Vert_{\gamma,[0,r]} $, $\tilde{P}$ is the polynomial in \eqref{PO} and $Q$ is the polynomial in \eqref{second-estimate}. Then there is a $\theta$-invariant set of full measure $\tilde{\Omega}$ and a family of immersed submanifolds $ S^{\upsilon}_{loc}(\omega)$ of $\mathscr{D}_{X(\omega)}^{\alpha ,\beta}([-r,0],W))$, $ 0<\upsilon<-\mu_{j_{0}} $ and $\omega \in \tilde{\Omega}$, satisfying in the following properties for every $\omega \in \tilde{\Omega}$:
%
%
%

\begin{itemize}
\item[(i)] There are random variables $ \rho_{1}^{\upsilon}(\omega), \rho_{2}^{\upsilon}(\omega)$, positive and finite on $\tilde{\Omega}$, for which
\begin{align}\label{eqn:rho_temp}
 \liminf_{p \to \infty} \frac{1}{p} \log \rho_i^{\upsilon}(\theta^p \omega) \geq 0, \quad i = 1,2
\end{align}
and such that
\begin{align*}
\big{\lbrace} \xi \in \mathscr{D}_{X(\omega)}^{\alpha ,\beta} \, :\, \sup_{n\geqslant 0}\exp(n\upsilon)\Vert \varphi(n, \omega, \xi) - Y_{\theta^{n}\omega}\Vert &<\rho_{1}^{\upsilon}(\omega)\big{\rbrace}\subseteq S^{\upsilon}_{loc}(\omega)\\&\subseteq \big{\lbrace} \xi \in \mathscr{D}_{X(\omega)}^{\alpha ,\beta} \, :\, \sup_{n\geqslant 0}\exp(n\upsilon)\Vert\varphi(n,\omega,\xi) - Y_{\theta^{n}\omega}\Vert<\rho_{2}^{\upsilon}({\omega})\big{\rbrace}.
\end{align*}
\item[(ii)]
\begin{align*}
 T_{Y_{\omega}}S^{\upsilon}_{loc}(\omega) = S_{\omega}.
\end{align*}
\item[(iii)] For $ n\geqslant N(\omega) $,
\begin{align*}
\varphi(n,\omega,S^{\upsilon}_{loc}(\omega))\subseteq S^{\upsilon}_{loc}(\theta^{n}\omega).
\end{align*}
\item[(iv)]For $ 0<\upsilon_{1}\leqslant\upsilon_{2}< - \mu_{j_{0}} $,
\begin{align*}
S^{\upsilon_{2}}_{loc}(\omega)\subseteq S^{\upsilon_{1}}_{loc}(\omega).
\end{align*}
Also for $n\geqslant N(\omega) $,
\begin{align*}
\varphi(n,\omega,S^{\upsilon_{1}}_{loc}(\omega)) \subseteq S^{\upsilon_{2}}_{loc}(\theta^{n}(\omega))
\end{align*}
 and consequently for $ \xi \in S^{\upsilon}_{loc}(\omega) $,
\begin{align}\label{eqn:contr_char}
    \limsup_{n\rightarrow\infty}\frac{1}{n}\log\Vert\varphi(n,{\omega}, \xi) - Y_{\theta^{n}\omega}\Vert\leqslant  \mu_{j_{0}}.
\end{align}
\item[(v)] 
\begin{align*}
\limsup_{n\rightarrow\infty} \frac{1}{n} \log\bigg{[}\sup\bigg{\lbrace}\frac{\Vert\varphi(n,{\omega},\xi) - \varphi(n,{\omega},\tilde{\xi}) \Vert }{\Vert \xi - \tilde{\xi} \Vert},\ \ \xi \neq\tilde{\xi},\  \xi,\tilde{\xi} \in S^{\upsilon}_{loc}(\omega) \bigg{\rbrace}\bigg{]}\leqslant \mu_{j_{0}}.
\end{align*}
\end{itemize}

\end{theorem}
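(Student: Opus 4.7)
The strategy is to verify the hypotheses of the abstract local stable manifold theorem from \cite{GR19} for the cocycle $\varphi$ linearized around the stationary trajectory $Y$, and then translate its conclusions into items (i)--(v). Lemma \ref{lemma:suff_cond_MET} already supplies the semi-invertible MET for the linearization $\psi^n_\omega = D_{Y_\omega}\varphi(n,\omega,\cdot)$; the integrability $\log^+\|\psi^1\| \in L^1$ needed there comes from Theorem \ref{THM} together with the second assumption in \eqref{eqn:inegrability_cond_stable}. What remains is a tempered ``quadratic Taylor remainder'' estimate for
\[
R_\omega(\xi) := \varphi(1,\omega,Y_\omega + \xi) - Y_{\theta\omega} - \psi^1_\omega\,\xi
\]
on a small neighbourhood of $0$ in $\mathscr{D}_{X(\omega)}^{\alpha,\beta}([-r,0],W)$.

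To get this, I would write
\[
R_\omega(\xi) = \int_0^1 \bigl( D_{Y_\omega + s\xi}\varphi(1,\omega,\cdot) - D_{Y_\omega}\varphi(1,\omega,\cdot) \bigr)\,\xi\, ds
\]
and invoke Theorem \ref{TTH} in the form \eqref{PO}, together with Corollary \ref{corollary:results_linear_true} when $B \neq 0$. On the unit ball around $Y_\omega$ this produces a bound
\[
\|R_\omega(\xi)\| \leq C_\omega \|\xi\|^2, \qquad C_\omega := e^{\tilde P(A_\omega,\|Y_\omega\|)}\, e^{\tilde Q(1)},
\]
together with a matching Lipschitz estimate for $DR_\omega$. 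By the first assumption in \eqref{eqn:inegrability_cond_stable} and Birkhoff's ergodic theorem applied along the shift $\theta$, one obtains $\tfrac{1}{n}\log C_{\theta^n\omega} \to 0$ almost surely, so $C_\omega$ is tempered along the orbit; this is precisely the hypothesis on the nonlinear perturbation that the abstract machinery of \cite{GR19} requires.

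With the tempered splitting
\[
\mathscr{D}_{X(\omega)}^{\alpha,\beta}([-r,0],W) = H^1_\omega \oplus \cdots \oplus H^{j_0-1}_\omega \oplus S_\omega
\]
from the semi-invertible MET and the tempered quadratic remainder in hand, the abstract stable manifold theorem of \cite{GR19} then produces, on a $\theta$-invariant set $\tilde\Omega$ of full measure, the immersed submanifolds $S^\upsilon_{loc}(\omega)$ for every $0 < \upsilon < -\mu_{j_0}$, together with the radii $\rho_i^\upsilon$ obeying the temperedness \eqref{eqn:rho_temp}. All five properties are then read off from the abstract statement: (i) is the defining sandwich characterization of $S^\upsilon_{loc}(\omega)$; (ii) is the identification of the tangent space at $Y_\omega$ with the stable Oseledets subspace $S_\omega = F_{\mu_{j_0}}(\omega)$ coming from \cite[Theorem 4.17]{GRS19}; (iii) and (iv) reflect the sub-exponential shrinking of the local radii under the shift, so that after finitely many iterations the image of $S^\upsilon_{loc}(\omega)$ is contained in the target manifold; and (v) together with \eqref{eqn:contr_char} encodes the exponential contraction at rate $\upsilon < -\mu_{j_0}$ built into the abstract construction.

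The main technical obstacle is the tempered remainder bound. The estimates from Section \ref{sec:sdde_nonlinear} are only local in the initial condition and involve polynomials in both $A_\omega$ and the $\mathscr{D}_{X(\omega)}^\beta$-norm of the base point; without the integrability hypotheses \eqref{eqn:inegrability_cond_stable}, the ``constant'' $C_\omega$ could grow exponentially fast along the orbit $\theta^n\omega$ and destroy the hyperbolic structure needed for the stable manifold construction. The two specific polynomials $\tilde P$ and $Q$ appearing in \eqref{eqn:inegrability_cond_stable} are dictated exactly by the bounds on the Lipschitz constant of $\xi \mapsto D_\xi\varphi$ and on $\|\psi^1_\omega\|$ provided by Theorems \ref{TTH} and \ref{THM}, which is why they appear in the hypotheses of the theorem in precisely this form.
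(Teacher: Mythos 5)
Your proposal is correct and follows essentially the same route as the paper: reduce to the abstract stable manifold theorem of \cite{GR19} via Lemma \ref{lemma:suff_cond_MET}, verify the tempered Lipschitz/quadratic bound on the remainder $\varphi(1,\omega,Y_\omega+\xi)-Y_{\theta\omega}-\psi^1_\omega\xi$ using Theorem \ref{TTH} in the form \eqref{PO}, and apply Birkhoff's ergodic theorem with the integrability condition \eqref{eqn:inegrability_cond_stable} to obtain temperedness along the orbit. The paper phrases this as checking the Lipschitz condition (2.5) of \cite{GR19} directly for $P_\omega(\xi)=\varphi(1,\omega,Y_\omega+\xi)-\varphi(1,\omega,Y_\omega)-\psi^1_\omega(\xi)$, which is exactly what your mean-value representation yields.
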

\begin{proof}
Set $E_{\omega} := \mathscr{D}_{X(\omega)}^{\alpha ,\beta}([-r,0],W)$. In Lemma \ref{lemma:suff_cond_MET}, we saw that our assumptions imply that $\psi^{n}_{\omega} = D_{Y_{\omega}}\varphi(n,\omega,\cdot)$ defines a compact linear cocycle acting on the measurable field of Banach spaces $ \lbrace E_{\omega} \rbrace_{\omega\in\Omega}$, that \cite[Assumption 1.1]{GR19} holds and that $\log^+ \|\psi^1 \| \in L^1(\Omega)$. In view of \cite[Theorem 2.10]{GR19}, it therefore suffices to check the condition \cite[Equation $(2.5)$]{GR19}. Set
 \begin{align*} 
  P_{\omega} : E_{\omega} &\to E_{\theta\omega }\\
  \xi &\mapsto \varphi(1,{\omega}, Y_{\omega} + \xi) - \varphi(1,{\omega}, Y_{\omega}) - \psi^{1}_{\omega}(\xi).
 \end{align*}
 Then from Theorem \ref{TTH},
 \begin{align*}
 \Vert P_{\omega}(\xi)-P_{\omega}(\tilde{\xi})\Vert\leqslant (\Vert\xi\Vert +\Vert\tilde{\xi}\Vert)\exp[\tilde{Q}(\Vert\xi \Vert +\Vert\tilde{\xi}\Vert)]\exp[\tilde{P}(A_{\omega},\Vert Y_{\omega}\Vert)]\ \Vert\xi - \tilde{\xi}\Vert
 \end{align*}
 where $\tilde{P}$ is the polynomial from \eqref{PO} and $\tilde{Q}$ is an increasing function. By Birkhoff's Ergodic Theorem,
 \begin{align*}
 \lim_{n\rightarrow\infty}\frac{1}{n}\tilde{P}(A_{\theta^{n}\omega},\Vert Y_{\theta^{n}\omega}\Vert) = 0
 \end{align*}
 almost surely. Therefore, \cite[Equation $(2.5)$]{GR19} is indeed satisfied and the result follows from  \cite[Theorem 2.10]{GR19}.
\end{proof}

Next, we formulate the result for unstable manifolds.

\begin{theorem}[Local unstable manifolds]\label{thm:unstable_manifold}
Assume the same setting as in Theorem \ref{thm:stable_manifold}. Furthermore, assume that $ \mu_1 > 0 $ holds for the first Lyapunov exponent. Set $\varsigma := \theta^{-1}$. Then there is a $\theta$-invariant set of full measure $\tilde{\Omega}$ and a family of immersed submanifolds $ U^{\upsilon}_{loc}(\omega)$ of $\mathscr{D}_{X(\omega)}^{\alpha ,\beta}([-r,0],W))$, $ 0<\upsilon< \mu_{k_{0}} $ and $\omega \in \tilde{\Omega}$, satisfying in the following properties for every $\omega \in \tilde{\Omega}$:

\begin{itemize}
\item[(i)] There are random variables $ \tilde{\rho}_{1}^{\upsilon}(\omega), \tilde{\rho}_{2}^{\upsilon}(\omega)$, positive and finite on $\tilde{\Omega}$, for which
\begin{align*}
 \liminf_{p \to \infty} \frac{1}{p} \log \tilde{\rho}_i^{\upsilon}(\varsigma^p \omega) \geq 0, \quad i = 1,2
\end{align*}
and such that
\begin{align*}
&\bigg{\lbrace} \xi_{\omega} \in \mathscr{D}_{X(\omega)}^{\alpha ,\beta}\, :\, \exists \lbrace \xi_{\varsigma^{n}\omega}\rbrace_{n\geqslant 1} \text{ s.t. } \varphi({m},{\varsigma^{n}\omega},\xi_{\varsigma^{n}\omega}) = \xi_{\varsigma^{n-m}\omega} \text{ for all } 0 \leq m \leq n \text{ and }\\ 
&\quad \sup_{n\geqslant 0}\exp(n\upsilon)\Vert \xi_{\varsigma^{n}\omega} - Y_{\varsigma^{n}\omega} \Vert < \tilde{\rho}_{1}^{\upsilon}(\omega)\bigg{\rbrace} \subseteq U^{\upsilon}_{loc}(\omega) \subseteq \bigg{\lbrace}  \xi_{\omega} \in \mathscr{D}_{X(\omega)}^{\alpha ,\beta}\, :\, \exists \lbrace \xi_{\varsigma^{n}\omega}\rbrace_{n\geqslant 1} \text{ s.t. } \\
&\qquad \varphi({m},{\varsigma^{n}\omega},\xi_{\varsigma^{n}\omega} ) = \xi_{\varsigma^{n-m}\omega} \text{ for all } 0 \leq m \leq n \text{ and }  \sup_{n\geqslant 0}\exp(n\upsilon)\Vert \xi_{\varsigma^{n}\omega} - Y_{\varsigma^{n}\omega}\Vert <\tilde{\rho}_{2}^{\upsilon}(\omega)\bigg{\rbrace}.
\end{align*}

\item[(ii)]
\begin{align*}
 T_{Y_{\omega}}U^{\upsilon}_{loc}(\omega) = {U}_{\omega}.
\end{align*}

\item[(iii)] For $ n\geqslant N(\omega) $,
\begin{align*}
U^{\upsilon}_{loc}(\omega)\subseteq \varphi({n},{\varsigma^{n}\omega},U^{\upsilon}_{loc}(\varsigma^{n}\omega)).
\end{align*}
\item[(iv)] For $ 0<\upsilon_{1} \leqslant \upsilon_{2} < \mu_{k_{0}} $,
\begin{align*}
U^{\upsilon_{2}}_{loc}(\omega)\subseteq U^{\upsilon_{1}}_{loc}(\omega).
\end{align*}
Also for $n\geqslant N(\omega) $,
\begin{align*}
U^{\upsilon_{1}}_{loc}(\omega)\subseteq \varphi({n},{\varsigma^{n}\omega},U^{\upsilon_{2}}_{loc}(\varsigma^{n}\omega))
\end{align*}
and consequently for $ \xi_{\omega}\in U^{\upsilon}_{loc}(\omega) $, 
\begin{align*}
\limsup_{n\rightarrow\infty}\frac{1}{n}\log\Vert \xi_{\varsigma^{n}\omega} - Y_{\varsigma^{n}\omega}\Vert\leqslant -\mu_{k_{0}}.
\end{align*}
\item[(v)] 
\begin{align*}
\limsup_{n\rightarrow\infty}\frac{1}{n}\log\bigg{[}\sup\bigg{\lbrace}\frac{\Vert \xi_{\varsigma^{n}\omega} - \tilde{\xi}_{\varsigma^{n}\omega}\Vert }{\Vert \xi_{\omega}-\tilde{\xi}_{\omega}\Vert},\ \ \xi_{\omega}\neq\tilde{\xi}_{\omega},\  \xi_{\omega},\tilde{\xi}_{\omega}\in U^{\upsilon}_{loc}(\omega) \bigg{\rbrace}\bigg{]}\leqslant -\mu_{k_{0}}.
\end{align*}
\end{itemize}
\end{theorem}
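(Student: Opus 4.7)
The proof will follow the same abstract-theoretic blueprint as for Theorem \ref{thm:stable_manifold}, this time invoking the unstable-manifold counterpart to \cite[Theorem 2.10]{GR19}, which is the reason the semi-invertible MET was needed in the first place. The plan is to verify, for the linearized cocycle $\psi^n_\omega := D_{Y_\omega}\varphi(n,\omega,\cdot)$ centred at the stationary trajectory $Y$, that all hypotheses of the abstract local unstable manifold theorem of \cite{GR19} are in force, and then translate the conclusion back into statements about $\varphi$.

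First I would recall, exactly as in the proof of Theorem \ref{thm:stable_manifold}, that by Lemma \ref{lemma:suff_cond_MET} the map $\psi$ is a compact linear cocycle on the measurable field $\{E_\omega\}_{\omega\in\Omega} := \{\mathscr{D}_{X(\omega)}^{\alpha,\beta}([-r,0],W)\}_{\omega\in\Omega}$, that \cite[Assumption 1.1]{GR19} is satisfied, and that $\log^+ \Vert \psi^1 \Vert \in L^1(\Omega)$ thanks to Theorem \ref{THM} and the integrability hypothesis $Q(A_\omega,\Vert Y_\omega \Vert) \in L^1(\Omega)$. The semi-invertible MET \cite[Theorem 1.20]{GR19} thus applies and yields the finite-dimensional fast-growing subspaces $H^i_\omega$, whose sum $U_\omega = \bigoplus_{1 \leq i \leq k_0} H^i_\omega$ is precisely the tangent candidate at $Y_\omega$ claimed in (ii). Crucially, $\psi^1_\omega$ restricts to a linear isomorphism $U_\omega \to U_{\theta\omega}$, which is exactly the structural ingredient that allows the abstract theorem to build backward orbits despite $\varphi$ itself being non-invertible.

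Next I would introduce the nonlinear perturbation
\begin{align*}
P_\omega : E_\omega \to E_{\theta\omega}, \qquad P_\omega(\xi) := \varphi(1,\omega,Y_\omega + \xi) - \varphi(1,\omega,Y_\omega) - \psi^1_\omega(\xi),
\end{align*}
and verify the slow-growth / Lipschitz estimate required by the abstract theorem. Applying Theorem \ref{TTH} in the form \eqref{PO} to the two solutions $\varphi(1,\omega,Y_\omega + \xi)$ and $\varphi(1,\omega,Y_\omega + \tilde\xi)$, one obtains
\begin{align*}
\Vert P_\omega(\xi) - P_\omega(\tilde\xi) \Vert \leq \bigl(\Vert \xi \Vert + \Vert \tilde\xi \Vert\bigr) \exp\bigl[\tilde Q(\Vert \xi \Vert + \Vert \tilde\xi \Vert)\bigr] \exp\bigl[\tilde P(A_\omega, \Vert Y_\omega \Vert)\bigr] \Vert \xi - \tilde\xi \Vert.
\end{align*}
Since $\tilde P(A_\omega,\Vert Y_\omega \Vert) \in L^1(\Omega)$ by \eqref{eqn:inegrability_cond_stable}, Birkhoff's ergodic theorem applied along the backward orbit $\varsigma = \theta^{-1}$ (which, being measure preserving and ergodic, enjoys the same ergodic averages) yields
\begin{align*}
\lim_{n \to \infty} \frac{1}{n} \tilde P(A_{\varsigma^n \omega}, \Vert Y_{\varsigma^n \omega} \Vert) = 0 \quad \P\text{-a.s.}
\end{align*}
This provides the tempered decay of the perturbation constants that the abstract theorem requires in its hypothesis (analogue of \cite[Equation (2.5)]{GR19}, now imposed along the backward direction).

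The main obstacle, and the reason this is more delicate than the stable case, is the non-invertibility of $\varphi$: unstable manifolds must be characterised intrinsically via coherent sequences of pre-images $\{\xi_{\varsigma^n \omega}\}_{n \geq 0}$ with $\varphi(m,\varsigma^n\omega,\xi_{\varsigma^n\omega}) = \xi_{\varsigma^{n-m}\omega}$, rather than via an inverse cocycle. This difficulty, however, is precisely what the semi-invertible MET and its abstract unstable-manifold counterpart in \cite{GR19} are designed to overcome: the $\psi$-invariant finite-dimensional subspaces $U_\omega$ admit a genuine inverse dynamics, so the classical Hadamard-Perron graph-transform argument can be carried out abstractly on these subspaces, producing $U^\upsilon_{loc}(\omega)$ as a Lipschitz graph over $U_\omega$ with the tangent property (ii), the forward-invariance (iii), the monotonicity and contraction rate (iv), and the Lipschitz rate bound (v). The characterisations in (i), (iv) and (v) are then direct transcriptions of the abstract conclusions with the random radii $\tilde\rho_1^\upsilon, \tilde\rho_2^\upsilon$ furnished by that theorem, completing the proof.
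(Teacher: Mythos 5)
Your proposal is correct and follows essentially the same route as the paper: the paper's proof is simply a citation of the abstract unstable manifold theorem \cite[Theorem 2.17]{GR19}, whose hypotheses (compactness, \cite[Assumption 1.1]{GR19}, integrability of $\log^+\Vert\psi^1\Vert$, and the Lipschitz/temperedness estimate for the perturbation $P_\omega$ via Theorem \ref{TTH}, \eqref{PO} and Birkhoff) were already verified in Lemma \ref{lemma:suff_cond_MET} and the proof of Theorem \ref{thm:stable_manifold}. Your explicit re-verification of these conditions along the backward orbit is a faithful, slightly more detailed rendering of the same argument.
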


\begin{proof}
    Follows from \cite[Theorem 2.17]{GR19}.
\end{proof}

\begin{remark}
 \begin{itemize}
  \item[(i)] In both Theorems \ref{thm:stable_manifold} and \ref{thm:unstable_manifold}, the assumption $\sigma \in C^3$ implies that the cocycle $\varphi$ is differentiable. Higher order smoothness of $\sigma$ will lead to higher order differentiability of $\varphi$, cf. Theorem \ref{thm:reg_sol_map}. As a consequence, we obtain higher order smoothness of the stable and unstable manifolds. In fact, $\varphi \in C^m$ implies that  $S^{\upsilon}_{loc}(\omega)$ resp. $ U^{\upsilon}_{loc}(\omega)$ are almost surely locally $C^{m-1}$, cf. \cite[Remark 2.11 and 2.18]{GR19}.
  
  \item[(ii)] If all Lyapunov exponents are non-zero, the stationary trajectory $Y$ is called \emph{hyperbolic}. In this case, the submanifolds $S^{\upsilon}_{loc}(\omega)$ and $U^{\upsilon}_{loc}(\omega)$ are \emph{transversal}, i.e.
  \begin{align*}
   \mathscr{D}_{X(\omega)}^{\alpha ,\beta} = T_{Y_{\omega}} S^{\upsilon}_{loc}(\omega) \oplus T_{Y_{\omega}} U^{\upsilon}_{loc}(\omega)
  \end{align*}
  almost surely.

 \end{itemize}

\end{remark}

\subsection{Examples}\label{subsec:examles}

We will now discuss examples of stochastic delay equations for which we can apply our results. First, we will consider the case of $0$ being a deterministic fixed point for the cocycle.

\begin{proposition}
  Let $\mathbf{X}$ be a delayed $\gamma$-rough path cocycle defined on an ergodic metric dynamical system $(\Omega,\mathcal{F},\P,(\theta_t)_{t \in \R})$ and let $\frac{1}{3} < \alpha < \beta < \gamma < \frac{1}{2}$ be such that \eqref{eqn:alpha_beta} holds. Assume $\sigma\in C^{3}_{b}$ resp. $\sigma\in C^{4}_{b}$ in the case $B \neq 0$ and that
  \begin{align*}
   \sigma(0,0) = \sigma_x(0,0) = \sigma_y(0,0) = 0.
  \end{align*}
  Then $Y \equiv 0$ is a stationary trajectory for the cocycle $\varphi$ induced by 
   \begin{align}\label{eqn:random_rough_delay_det_fp}
      dy_t = B(y_t,y_{t-r})\, dt + \sigma(y_t,y_{t-r})\, d\mathbf{X}_t(\omega).
  \end{align}
  If
\begin{align}\label{eqn:inegrability_cond_prop_fp}
\tilde{P}(A_{\omega}, 0 )\in L^{1}(\Omega) \ \ \text{and}\ \ Q(A_{\omega}, 0)\in L^{1}(\Omega)
\end{align}
    where $A_{\omega} = 1+\Vert \mathbf{X}(\omega) \Vert_{\gamma,[0,r]}$, $\tilde{P}$ is the polynomial in \eqref{PO} and $Q$ is the polynomial in \eqref{second-estimate}, the integrability condition of Theorem \ref{thm:stable_manifold} and Theorem \ref{thm:unstable_manifold} is satisfied and yields the existence of local stable and unstable manifolds around $0$. In particular, the result holds for $\mathbf{X}$ being $\mathbf{B}^{\mathrm{It\bar{o}}}$ or $\mathbf{B}^{\mathrm{Strat}}$.
\end{proposition}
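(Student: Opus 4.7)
The plan is to check the two assertions in turn: first, that $Y \equiv 0$ defines a stationary trajectory of the cocycle $\varphi$; second, that the integrability assumption \eqref{eqn:inegrability_cond_prop_fp} matches the hypothesis of Theorems \ref{thm:stable_manifold} and \ref{thm:unstable_manifold}, and that it is satisfied when $\mathbf{X} \in \{\mathbf{B}^{\mathrm{It\bar{o}}}, \mathbf{B}^{\mathrm{Strat}}\}$. None of the three steps is substantively difficult; the only place where some care is needed is the moment bound for the delayed L\'evy area, but this has already been settled in \cite{GRS19}.

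For the stationarity, I would plug the zero controlled path $\xi \equiv 0 \in \mathscr{D}_{X(\omega)}^{\alpha,\beta}([-r,0],W)$ (with vanishing Gubinelli derivative and vanishing remainder) into \eqref{eqn:random_rough_delay_det_fp}. Since $B$ is linear one has $B(0,0) = 0$, and by assumption $\sigma(0,0) = 0$, so $y \equiv 0$ with Gubinelli derivative $y'_t = \sigma(0,0) = 0$ solves the equation on $[0,r]$. Uniqueness from Theorem \ref{thm:delay_existence} (resp.\ Theorem \ref{thm:SDDE_linear_drift} if $B \neq 0$) then forces $\varphi(1,\omega,0) = 0$ as an element of $\mathscr{D}_{X(\theta\omega)}^{\alpha,\beta}([-r,0],W)$, and iterating gives $\varphi(n,\omega,0) = 0 = Y_{\theta^n\omega}$ for every $n \in \N$. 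Since $\omega \mapsto \|Y_\omega\| \equiv 0$ is trivially measurable, all three clauses of Definition \ref{def:stat_traj} are satisfied.

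For the reduction of the integrability condition and the examples, note that evaluating \eqref{eqn:inegrability_cond_stable} at $Y \equiv 0$ gives $\tilde P(A_\omega, 0), Q(A_\omega, 0) \in L^1(\Omega)$, which is exactly \eqref{eqn:inegrability_cond_prop_fp}; Theorems \ref{thm:stable_manifold} and \ref{thm:unstable_manifold} then apply at the stationary trajectory $0$ and produce the asserted local stable and unstable manifolds. For the It\^o/Stratonovich case, since $\tilde P(\cdot, 0)$ and $Q(\cdot, 0)$ are polynomials it suffices to show $A_\omega \in L^p(\Omega)$ for every $p \geq 1$. Standard Kolmogorov-type estimates for Brownian rough paths (see e.g.\ \cite{FH14}) give Gaussian tails for $\|B\|_{\gamma;[0,r]}$ and for $\sqrt{\|\mathbb{B}^{\mathrm{It\bar{o}}}\|_{2\gamma;[0,r]}}$ whenever $\gamma < 1/2$, while the analogous bound for the delayed L\'evy area $\mathbb{B}^{\mathrm{It\bar{o}}}(-r)$ is provided in \cite{GRS19}. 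The Stratonovich case differs from the It\^o one only by the deterministic smooth correction $\tfrac{1}{2}(t-s)I_d$, which is harmless, so both integrability conditions hold.
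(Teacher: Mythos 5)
Your proposal is correct and follows essentially the same route as the paper: verify that the zero path (with zero Gubinelli derivative) solves the equation because $\sigma(0,0)=0$ and $B(0,0)=0$, so $Y\equiv 0$ is a stationary trajectory, and then reduce the integrability condition to finiteness of moments of $A_\omega$, which for $\mathbf{B}^{\mathrm{It\bar{o}}}$ and $\mathbf{B}^{\mathrm{Strat}}$ is exactly the moment bound for the delayed rough path cocycle from \cite[Proposition 2.2]{GRS19}. The paper makes the vanishing of the rough integral explicit via its Riemann-sum representation rather than invoking uniqueness, but this is only a cosmetic difference.
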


\begin{proof}
 From
 \begin{align*}
  \int_0^t \sigma(y_s,y_{s-r})\, d\mathbf{X}_s(\omega) &= \lim_{|\Pi| \to 0} \sum_{t_j \in \Pi} \sigma(y_{t_j},y_{t_j - r})X_{t_{j},t_{j+1}} +  \sigma_x(y_{t_j},y_{t_j - r}) \sigma(y_{t_j},y_{t_j - r}) \mathbb{X}_{t_{j},t_{j+1}} \\
  &\quad + \sigma_y(y_{t_j},y_{t_j - r}) \sigma(y_{t_j},y_{t_j - r}) \mathbb{X}_{t_{j},t_{j+1}}(-r),
 \end{align*}
 it follows that $Y \equiv 0$ is a solution to \eqref{eqn:random_rough_delay_det_fp} and therefore a stationary trajectory in the sense of Definition \ref{def:stat_traj}. In the case of $\mathbf{X}$ being $\mathbf{B}^{\mathrm{It\bar{o}}}$ or $\mathbf{B}^{\mathrm{Strat}}$, the norm of the delayed rough path cocycle has moments of any order, cf. \cite[Proposition 2.2]{GRS19}, therefore condition \eqref{eqn:inegrability_cond_prop_fp} is satisfied.
\end{proof}

Next, we propose a condition under which \eqref{drift} admits a random stationary trajectory $Y$. Let $B$ be a two-sided Brownian motion defined on a probability space $(\Omega,\mathcal{F},\P)$ adapted to two-parameter filtration $(\mathcal{F}^t_s)_{s \leq t}$ (cf. \cite[Section 2.3.2]{Arn98}). Consider
\begin{align}\label{stationary}
\begin{split}
    dy_{t} &= C y_t\, dt + \sigma(y_{t},y_{t-r})d B_{t}\\
    y_{s} &= \xi_{s}, \ \ \ -r\leqslant s\leqslant 0
\end{split}
\end{align}
 as a classical stochastic delay differential equation in It\=o sense where $C \colon W \to W$ is a linear map. Assume that $ \sigma $ is a bounded Lipschitz function with Lipschitz constant $ L $ and let all the eigenvalues of $C$ be negative. Consequently, there exist $ M,\lambda >0 $ such that for every $ t>0 $,
\begin{align}\label{eqn:lambda_M}
    |\exp(tC)| \leqslant M\exp(-\lambda t).
\end{align}
Set $\mathcal{F}^t_{-\infty} := \sigma( \cup_{s \leq t} \mathcal{F}_s^t)$. A stochastic process $y \colon \R \to W$ is called \emph{$(\mathcal{F}^t_{-\infty})$-adapted} if $y_t$ is $\mathcal{F}^t_{-\infty}$-measurable for every $t \in \R$. In that case for, any continuous, $(\mathcal{F}^t_{-\infty})$-adapted process $y$, the following process is well defined, continuous and $(\mathcal{F}^t_{-\infty})$-adapted:
\begin{align*}
\Gamma(y)(t):=\int_{-\infty}^{t}\exp((t-\tau)C) \sigma (y_{\tau},y_{\tau-r})\, dB_{\tau}.
\end{align*}
By the It\=o isometry,
\begin{align}\label{IT}
\begin{split}
    \E|\Gamma(y)(t)|^{2} &\leq \E \int_{-\infty}^{t}|\exp((t-\tau)C)|^2 |\sigma (y_{\tau},y_{\tau-r})|^{2}\, ds,\\
    \E|\Gamma(y)(t)-\Gamma(\tilde{y})(t)|^{2} &\leq \E  \int_{-\infty}^{t} |\exp((t-\tau)C)|^2 |\sigma(y_{\tau},y_{\tau-r}) - \sigma(\tilde{y}_{\tau},\tilde{y}_{\tau-r})|^{2}\, ds .
\end{split}
\end{align}

\begin{lemma}\label{lemma:ex_stat}
Assume $ \frac{2ML^{2}}{\lambda}<1 $. Then there is a continuous, $(\mathcal{F}^t_{-\infty})$-adapted process $ Y_{t} $ such that for every $ t\in \mathbb{R} $,
\begin{align*}
Y_{t} = \int_{-\infty}^{t}\exp((t-\tau)C) \sigma(Y_{\tau},Y_{\tau-r})\, dB_{\tau}.
\end{align*}
\end{lemma}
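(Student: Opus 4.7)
\medskip

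\noindent\textbf{Proof plan.} The plan is to realise $Y$ as the unique fixed point of $\Gamma$ on a suitably chosen Banach space, using Banach's fixed point theorem. Let
\begin{align*}
  \mathcal{B} := \Bigl\{ y \colon \R \times \Omega \to W \ \big|\ y \text{ is continuous, } (\mathcal{F}^t_{-\infty})\text{-adapted, and } \|y\|_{\mathcal{B}}^2 := \sup_{t\in\R}\E|y_t|^2 < \infty \Bigr\},
\end{align*}
endowed with the norm $\|\cdot\|_{\mathcal{B}}$, which makes it a Banach space (after identifying indistinguishable processes). The whole argument reduces to showing (a) $\Gamma$ maps $\mathcal{B}$ into itself and (b) $\Gamma$ is a strict contraction, with the contraction constant dictated by the hypothesis $\tfrac{2ML^{2}}{\lambda}<1$.

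For the self-mapping property, fix $y\in\mathcal{B}$. Combining the first inequality in \eqref{IT} with the semigroup bound \eqref{eqn:lambda_M} and the boundedness of $\sigma$, we obtain
\begin{align*}
   \E|\Gamma(y)(t)|^{2} \leq \|\sigma\|_\infty^2 \int_{-\infty}^{t} M^{2} e^{-2\lambda(t-\tau)}\, d\tau = \frac{M^{2}\|\sigma\|_\infty^{2}}{2\lambda},
\end{align*}
uniformly in $t$, so $\Gamma(y)$ has finite $\|\cdot\|_{\mathcal{B}}$ norm. Adaptedness is immediate from the construction of the It\^o integral. For continuity, I would use the semigroup identity
\begin{align*}
   \Gamma(y)(t+h) = \exp(hC)\,\Gamma(y)(t) + \int_{t}^{t+h} \exp((t+h-\tau)C)\,\sigma(y_\tau, y_{\tau-r})\, dB_\tau,
\end{align*}
and note that as $h \downarrow 0$ the first term converges to $\Gamma(y)(t)$ (by continuity of the matrix exponential) while the $L^{2}$-norm of the second is $\mathcal{O}(h^{1/2})$ by the It\^o isometry; a Kolmogorov-type argument on each compact subinterval then produces a continuous modification.

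For the contraction property, the second inequality in \eqref{IT}, combined with $|\sigma(y,z)-\sigma(\tilde y,\tilde z)|\leq L(|y-\tilde y|+|z-\tilde z|)$ and \eqref{eqn:lambda_M}, yields
\begin{align*}
   \E|\Gamma(y)(t)-\Gamma(\tilde y)(t)|^{2} \leq \int_{-\infty}^{t} M\,e^{-\lambda(t-\tau)}\,\cdot 2L^{2}\bigl(\E|y_\tau-\tilde y_\tau|^{2} + \E|y_{\tau-r}-\tilde y_{\tau-r}|^{2}\bigr)\, d\tau \leq \frac{2ML^{2}}{\lambda}\,\|y-\tilde y\|_{\mathcal{B}}^{2},
\end{align*}
where I take the convention $|\exp(tC)|^{2}\leq M e^{-\lambda t}$ consistent with \eqref{IT}. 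Taking $\sup_{t}$ on the left gives a contraction factor strictly less than $1$ by hypothesis. Banach's fixed point theorem then produces a unique $Y\in\mathcal{B}$ with $Y=\Gamma(Y)$.

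The main obstacle I anticipate is not the fixed point argument itself but the verification that the fixed point $Y$ inherits the desired pathwise continuity and the correct adaptedness to $(\mathcal{F}^t_{-\infty})$ on the whole real line; indeed, the integrand $\sigma(Y_\tau,Y_{\tau-r})$ is only $\mathcal{F}^{\tau}_{-\infty}$-measurable (not $\mathcal{F}^{\tau}_{0}$), so one needs the two-parameter filtration formalism of \cite[Section 2.3.2]{Arn98} to make sense of the integral over $(-\infty,t]$ in the usual It\^o sense. Once this framework is in place, approximating by integrals over $[-N,t]$ and passing to the limit (using that the tail $\int_{-\infty}^{-N}$ vanishes in $L^{2}$ by the exponential bound) closes the argument and delivers a continuous, $(\mathcal{F}^t_{-\infty})$-adapted fixed point, which is the desired stationary trajectory $Y$.
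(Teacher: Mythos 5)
Your proposal is correct and follows essentially the same route as the paper: the paper also defines the Banach space $\mathcal{X}$ of continuous, $(\mathcal{F}^t_{-\infty})$-adapted processes with $\sup_{t\in\R}(\E|y_t|^2)^{1/2}<\infty$ and invokes Banach's fixed point theorem, with the self-mapping and contraction properties read off from \eqref{IT}, \eqref{eqn:lambda_M} and the Lipschitz/boundedness assumptions on $\sigma$ exactly as you do. Your additional remarks on pathwise continuity, adaptedness in the two-parameter filtration, and the bookkeeping of the constant $\tfrac{2ML^2}{\lambda}$ only flesh out details the paper leaves implicit.
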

\begin{proof}
Set
\begin{align*}
\mathcal{X}:= \left\{ y:\mathbb{R}\rightarrow W\, :\,  y \ \text{is continuous,  $(\mathcal{F}^t_{-\infty})$-adapted and}\ \sup_{t\in\mathbb{R}}(\E|y_{t}|^{2})^{\frac{1}{2}}<\infty  \right\}.
\end{align*}
It can easily be seen that $ \mathcal{X} $ is a Banach space. By \eqref{IT},
\begin{align*}
\Gamma :\mathcal{X}\longrightarrow\mathcal{X}
\end{align*}
is a contraction, so our claim follows from a standard fixed point argument.
\end{proof}

\begin{lemma}\label{eqn:Y_controlled}
 Let $Y$ be the process from Lemma $\ref{lemma:ex_stat}$ and set $Y'_t = \sigma(Y_t,Y_{t-r})$. Then $(Y,Y')$ is almost surely controlled by $B$. Moreover, $\| (Y,Y')\|_{\mathscr{D}_{B}^{\gamma}([a,b],W)} \in L^p(\Omega)$ for every $p > 0$ and every $a<b$.
\end{lemma}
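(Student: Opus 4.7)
The plan is to recognize that $Y$ solves a classical It\=o SDDE and then apply Burkholder--Davis--Gundy (BDG) together with Kolmogorov's continuity theorem to extract the controlled path structure with all required moments. Differentiating under the integral or, equivalently, using the semigroup identity $\exp((t-\tau)C) = \exp((t-s)C)\exp((s-\tau)C)$ for $\tau \le s \le t$, we obtain
\begin{align*}
    Y_t = Y_s + \int_s^t C Y_u\, du + \int_s^t \sigma(Y_u, Y_{u-r})\, dB_u
\end{align*}
on every interval $[s,t]$, i.e.\ $Y$ solves \eqref{stationary} in the It\=o sense on the whole real line. In particular
\begin{align*}
    Y_{s,t} = \int_s^t C Y_u\, du + \sigma(Y_s, Y_{s-r}) B_{s,t} + \int_s^t \big[\sigma(Y_u, Y_{u-r}) - \sigma(Y_s, Y_{s-r})\big]\, dB_u,
\end{align*}
so the candidate Gubinelli derivative $Y'_t = \sigma(Y_t, Y_{t-r})$ appears naturally and the remainder is
\begin{align*}
    Y^{\#}_{s,t} = \int_s^t C Y_u\, du + \int_s^t \big[\sigma(Y_u, Y_{u-r}) - \sigma(Y_s, Y_{s-r})\big]\, dB_u.
\end{align*}

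First I would establish uniform $L^p$ bounds on $Y_t$. Using the mild formulation and \eqref{eqn:lambda_M}, BDG gives, for every $p \ge 1$,
\begin{align*}
    \E|Y_t|^{2p} \lesssim \E\Bigl(\int_{-\infty}^t |\exp((t-\tau)C)|^2 |\sigma(Y_\tau, Y_{\tau - r})|^2\, d\tau\Bigr)^p \le \|\sigma\|_\infty^{2p} \Bigl(\tfrac{M^2}{2\lambda}\Bigr)^p,
\end{align*}
uniformly in $t$. Next I would use BDG a second time, together with boundedness of $\sigma$ and the drift estimate, to obtain $\E|Y_{s,t}|^{2p} \lesssim |t-s|^p$ for every $p$. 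Kolmogorov's continuity theorem then yields, for any $\gamma' \in (\gamma, 1/2)$, the $L^p$-integrability of $\|Y\|_{\gamma' ; [a,b]}$ (hence of $\|Y\|_{\gamma ; [a,b]}$), and in particular $Y \in C^{\gamma}([a,b], W)$ almost surely with moments of every order.

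For the controlled path structure, the key estimate is on $Y^\#$. The drift part is Lipschitz in time: $\bigl|\int_s^t C Y_u\, du\bigr| \le |C|\, \|Y\|_{\infty; [a,b]} |t-s|$, which is dominated by $|t-s|^{2\gamma}$ for $\gamma < 1/2$ on a bounded interval. For the stochastic part, I would use BDG, the Lipschitz property of $\sigma$, and the Hölder regularity from the previous step: conditionally (or via Minkowski, after bounding deterministically a Hölder constant with moments),
\begin{align*}
    \E\Bigl|\int_s^t \big[\sigma(Y_u, Y_{u-r}) - \sigma(Y_s, Y_{s-r})\big]\, dB_u\Bigr|^{2p} \lesssim L^{2p}\, \E\|Y\|_{\gamma ; [a-r,b]}^{2p} |t-s|^{p(1+2\gamma)}.
\end{align*}
Combining this with the drift bound and applying Kolmogorov's continuity theorem (this time to the two-parameter function $(s,t) \mapsto Y^\#_{s,t}$, or equivalently using the Garsia--Rodemich--Rumsey inequality) gives $\|Y^\#\|_{2\gamma ; [a,b]} \in L^p(\Omega)$ for every $p$. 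Finally, since $Y'_t = \sigma(Y_t, Y_{t-r})$ with $\sigma$ Lipschitz, $\|Y'\|_{\gamma ; [a,b]} \le L\, \|Y\|_{\gamma ; [a-r,b]}$ and hence also has all moments. Summing the pieces entering the norm \eqref{eqn:norm_delayed_controlled_path}, we conclude that $(Y,Y') \in \mathscr{D}_B^{\gamma}([a,b],W)$ almost surely with $\|(Y,Y')\|_{\mathscr{D}_B^{\gamma}([a,b],W)} \in L^p(\Omega)$ for every $p > 0$.

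The only mildly technical point is converting the BDG moment bound on increments of $Y^\#$ into a moment bound on its $2\gamma$-Hölder seminorm; this is exactly what Kolmogorov/GRR is designed for, so the argument is standard.
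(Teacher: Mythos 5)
Your proof is correct, and it follows the same core strategy as the paper's: Burkholder--Davis--Gundy moment bounds for the increments and for the remainder, then a Kolmogorov-type criterion for two-parameter functions (as in \cite[Theorem 3.1]{FH14}) to turn these into $L^p$ bounds on the H\"older seminorms entering the controlled-path norm. The difference is in how the remainder $Y^{\#}_{s,t}=Y_{s,t}-\sigma(Y_s,Y_{s-r})B_{s,t}$ is handled. You first pass from the mild representation to the strong It\=o form (legitimate here, since $C$ is a matrix, via the semigroup identity and a stochastic Fubini argument), obtaining $Y^{\#}_{s,t}=\int_s^t CY_u\,du+\int_s^t\big[\sigma(Y_u,Y_{u-r})-\sigma(Y_s,Y_{s-r})\big]\,dB_u$, and you estimate the stochastic part by inserting the pathwise H\"older norm $\|Y\|_{\gamma;[a-r,b]}$ (whose moments you established first) inside the BDG bound; note the stochastic part satisfies the Chen-type relation $R_{s,t}=R_{s,u}+R_{u,t}+\big(\sigma(Y_u,Y_{u-r})-\sigma(Y_s,Y_{s-r})\big)B_{u,t}$, so the two-parameter Kolmogorov/GRR step you invoke is indeed applicable, and your exponent count $(1+2\gamma)/2>2\gamma$ for $\gamma<1/2$ is what makes it close. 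The paper instead stays with the mild representation, splits $Y^{\#}$ into three stochastic integrals over $(-\infty,s]$ and $[s,t]$, and needs only the pointwise increment moments $\E|Y_{s,\tau}|^{2m}\lesssim(\tau-s)^{m}$ combined with H\"older's inequality, so no a priori pathwise regularity of $Y$ enters the remainder estimate and the conversion to the strong form is avoided altogether. Your route costs these two extra (routine) steps but makes the Gubinelli derivative and remainder visible directly from the equation; both yield $\E|Y^{\#}_{s,t}|^{2m}\lesssim(t-s)^{2m}$-type bounds and conclude identically.
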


\begin{proof}
From the Burkholder-Davis-Gundy inequality, for every $ m\in\mathbb{N} $ there exists a $ \beta_{2m}\in\mathbb{R}  $ such that
\begin{align}\label{Holder}
    \E |Y_{s,t}|^{2m} \leqslant \beta_{2m}(t-s)^{m}
\end{align}
for every $s < t$. Note that
\begin{align*}
Y_{s,t}-\sigma(Y_{s},Y_{s-r})B_{s,t} &= \int^{s}_{-\infty}\exp{(}(s-\tau)C{)}\big{[}\exp\big{(}(t-s)C{)}-1\big{]}\sigma(Y_{\tau},Y_{\tau -r})\, dB_{\tau}\\
&\quad + \int_{s}^{t}\exp((t-\tau)C)\big{[}\sigma(Y_{\tau},Y_{\tau-r})-\sigma(Y_{s},Y_{s-r})\big{]}\, dB_{\tau} \\
&\quad + \int_{s}^{t}\big{[}\exp((t-\tau)C)-1\big{]}\, dB_{\tau} \, \sigma(Y_{s},Y_{s-r}).
\end{align*}
By the Burkholder-Davis-Gundy inequality and our assumptions, for $\alpha_{2m}\in\mathbb{R}$,
\begin{align*}
    \E\bigg{|}\int^{s}_{-\infty}\exp{(}(s-\tau)C{)}\big{[}\exp\big{(}(t-s)C{)}-1\big{]}\sigma(Y_{\tau},Y_{\tau -r})\, dB_{\tau}\bigg{|}^{2m}\leqslant \alpha_{2m}(t-s)^{2m}
\end{align*}
and
\begin{align*}
    \E\bigg{|}\int_{s}^{t}\big{[}\exp((t-\tau)C)-1\big{]}dB_{\tau} \, \sigma(Y_{s},Y_{s-r}) \bigg{|}^{2m}\leqslant \alpha_{2m}(t-s)^{2m}.
\end{align*}
Using again the Burkholder-Davis-Gundy inequality, H\"older's inequality and  \eqref{Holder}, we obtain that there are constants $\beta_{2m},\gamma_{2m}\in\mathbb{R} $ such that
\begin{align*}
    &\E\bigg{|}\int_{s}^{t}\exp((t-\tau)C)\big{[}\sigma(Y_{\tau},Y_{\tau-r})-\sigma(Y_{s},Y_{s-r})\big{]}\, dB_{\tau}\bigg{|}^{2m} \\
    \leqslant\ &\beta_{2m} \E\ \bigg{|}\int_{s}^{t}(\vert Y_{s,\tau}\vert^{2}+\vert Y_{s-r,\tau-r}\vert^{2})d\tau\bigg{|}^{m}\\
    \leqslant\ &\beta_{2m}(t-s)^{m-1} \E\int_{s}^{t}(\vert Y_{s,\tau}\vert^{2} + \vert Y_{s-r,\tau-r}\vert^{2})^{m}d\tau \leqslant \gamma_{2m}(t-s)^{2m}.
\end{align*}
Consequently, we have shown that for every $m \geq 1$ there are constants $ \tilde{\alpha}_{2m} $ such that
\begin{align*}
    \E|Y_{s,t} - \sigma(Y_{s},Y_{s-r})B_{s,t}|^{2m} \leqslant \tilde{\alpha}_{2m}(t-s)^{2m}
\end{align*}
for every $s<t$. Set $ Y^{\#}_{s,t} := Y_{s,t}-\sigma(Y_{s},Y_{s-r})B_{s,t}$. By a version of Kolmogorov's continuity theorem similar to \cite[Theorem 3.1]{FH14}, we obtain
\begin{align*}
    \Vert Y\Vert_{\gamma;[a,b]} + \Vert Y^{\#}\Vert_{2\gamma;[a,b]} \in L^p(\Omega)
\end{align*}
for every $p > 0$ and $a < b$ from which the result follows.

\end{proof}

\begin{proposition}
 Let $C$ be a linear map with negative eigenvalues only and $\sigma \in C^4_b$. Let $\lambda$ and $M$ be as in \eqref{eqn:lambda_M} and let $L$ be the Lipschitz constant of $\sigma$. Assume $ \frac{2ML^{2}}{\lambda}<1 $. Then there exists a stationary trajectory for the cocycle $\varphi$ induced by
 \begin{align}\label{eqn:stationary_rough}
\begin{split}
    dy_{t} &= C y_t\, dt + \sigma(y_{t},y_{t-r})d \mathbf{B}^{\mathrm{It\bar{o}}}_{t}\\
    y_{s} &= \xi_{s}, \ \ \ -r\leqslant s\leqslant 0
\end{split}
\end{align}
and the integrability condition \eqref{eqn:inegrability_cond_stable} of Theorem \ref{thm:stable_manifold} and Theorem \ref{thm:unstable_manifold} is satisfied.

\end{proposition}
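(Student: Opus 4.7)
The plan is to lift the process $Y$ constructed in Lemma \ref{lemma:ex_stat} to the bundle of controlled-path spaces $\{E_\omega\}_{\omega\in\Omega}$ and verify both the definition of a stationary trajectory (Definition \ref{def:stat_traj}) and the integrability hypothesis of Theorems \ref{thm:stable_manifold} and \ref{thm:unstable_manifold}. Concretely, for each $\omega\in\Omega$ I would set $Y_\omega := (Y_t(\omega))_{t\in[-r,0]}$ with Gubinelli derivative $Y'_\omega(t) := \sigma(Y_t(\omega), Y_{t-r}(\omega))$, redefining $Y_\omega := 0$ on the null event where this fails to be a controlled path. Lemma \ref{eqn:Y_controlled} applied on $[-r,0]$ gives $(Y_\omega, Y'_\omega)\in\mathscr{D}_B^\gamma([-r,0],W)$ almost surely, and the continuous embedding $\mathscr{D}_B^\gamma\hookrightarrow \mathscr{D}_B^{\alpha,\beta}=E_\omega$ (valid because $\alpha<\beta<\gamma$) places $Y_\omega$ in the correct fibre. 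Measurability of $\omega\mapsto \|Y_\omega\|_{E_\omega}$ is inherited from the joint measurability of $(Y,B)$.

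Next, I would verify the cocycle identity $\varphi(n,\omega,Y_\omega)=Y_{\theta^n\omega}$, which by iteration reduces to $n=1$. Since $Y$ satisfies the It\=o equation \eqref{stationary} on every sub-interval, and since the It\=o lift $\mathbf{B}^{\mathrm{It\bar{o}}}$ is designed precisely so that the rough integral against it coincides with the It\=o integral for adapted controlled integrands of the form $\sigma(Y_\cdot,Y_{\cdot-r})$, the restriction $Y|_{[0,r]}(\omega)$ is the unique solution of the random rough delay equation \eqref{eqn:stationary_rough} on $[0,r]$ with initial segment $Y_\omega$; hence it equals $\varphi(1,\omega,Y_\omega)$. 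On the other hand, the construction of $Y$ in Lemma \ref{lemma:ex_stat} is manifestly invariant under the Wiener shift $\theta=\theta_r$, yielding $Y_t(\theta\omega)=Y_{t+r}(\omega)$ almost surely; restricting to $t\in[-r,0]$ gives exactly $Y_{\theta\omega}=Y|_{[0,r]}(\omega)$, which matches the cocycle output.

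For integrability, I would observe that $\tilde P$ and $Q$ are polynomials in two variables, so it suffices to show that both $A_\omega = 1+\|\mathbf{B}^{\mathrm{It\bar{o}}}(\omega)\|_{\gamma,[0,r]}$ and $\|Y_\omega\|_{E_\omega}$ lie in $L^p(\Omega)$ for every finite $p$, and then apply H\"older. Moments of all orders for $A_\omega$ come from the Gaussian integrability of the It\=o Brownian rough path, as already invoked in \cite[Proposition 2.2]{GRS19}. For $\|Y_\omega\|_{E_\omega}$, Lemma \ref{eqn:Y_controlled} provides $\|(Y,Y')\|_{\mathscr{D}_B^\gamma([-r,0])}\in L^p(\Omega)$ for every $p>0$, which dominates the weaker $E_\omega$-norm. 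Applying H\"older monomial by monomial then yields \eqref{eqn:inegrability_cond_stable}.

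The main obstacle is the cocycle-invariance step: one must carefully match the It\=o stochastic integral that defines $Y$ with the pathwise rough delay flow produced by $\varphi$, and align the Wiener shift $\theta_r$ on $\Omega$ with a time-translation of the trajectory while simultaneously moving the base of the controlled-path space from $X(\omega)$ to $X(\theta\omega)$. Once this identification is in place, the remaining checks are soft and follow directly from the two preparatory lemmas together with the standard Kolmogorov-type moment bounds for Brownian rough paths.
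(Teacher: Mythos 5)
Your proposal is correct and follows essentially the same route as the paper: lift $Y$ from Lemma \ref{lemma:ex_stat} with Gubinelli derivative $Y'_t=\sigma(Y_t,Y_{t-r})$ via Lemma \ref{eqn:Y_controlled}, identify the It\=o integral with the rough integral against $\mathbf{B}^{\mathrm{It\bar{o}}}$ (the paper cites \cite[Proposition 3.2]{GRS19} for exactly this step) to get the cocycle/stationarity properties, obtain measurability of the fibre norm, and conclude the integrability condition from the $L^p$-moments of $\|(Y,Y')\|_{\mathscr{D}_B^{\gamma}}$ together with the Gaussian moments of the Brownian rough-path norm from \cite[Proposition 2.2]{GRS19}. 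The only difference is that you spell out the shift-invariance and time-translation bookkeeping a bit more explicitly, which the paper leaves implicit.
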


\begin{proof}
 Let $\hat{Y} = (Y,Y')$ be defined as in Lemma \ref{eqn:Y_controlled}. From \cite[Proposition 3.2]{GRS19}, 
 \begin{align*}
  \hat{Y}_{t} = \int_{-\infty}^{t}\exp((t-\tau)C) \sigma(\hat{Y}_{\tau},\hat{Y}_{\tau-r})\, d \mathbf{B}^{\mathrm{It\bar{o}}}_{t}
 \end{align*}
 almost surely for every $t$. Therefore, (i) and (ii) of Definition \ref{def:stat_traj} follow directly. Since
 \begin{align*}
  \| \hat{Y} \|_{\mathscr{D}_{B}^{\beta}([-r,0])} = |Y_{-r}| + |Y'_{-r}| + \sup_{s,t \in [-r,0] \cap \mathbb{Q}, s \neq t} \frac{|Y'_t - Y'_s|}{|t-s|^{\beta}} + \sup_{s,t \in [-r,0] \cap \mathbb{Q}, s \neq t} \frac{|Y_{s,t} - Y'_sB_{s,t}|}{|t-s|^{2\beta}},
 \end{align*}
 measurability of $\omega \mapsto \| \hat{Y}(\omega) \|_{\mathscr{D}_{B(\omega)}^{\beta}([-r,0])}$ follows, too. The integrability condition \eqref{eqn:inegrability_cond_stable} is satisfied due to Lemma \ref{eqn:Y_controlled} and \cite[Proposition 2.2]{GRS19}.

\end{proof}

\begin{remark}
 It is possible to prove directly that the rough differential equation
 \begin{align*}
  \hat{Y}_{t} = \int_{-\infty}^{t}\exp((t-\tau)C) \sigma(\hat{Y}_{\tau},\hat{Y}_{\tau-r})\, d \mathbf{B}^{\mathrm{It\bar{o}}}_{t}
 \end{align*}
 has a fixed point using the standard estimates for the rough integral. However, this would yield a stronger condition than $ \frac{2ML^{2}}{\lambda}<1 $.

\end{remark}

\section*{Appendix}

\begin{proof}[Proof of Theorem \ref{THM}]
We start with equation \eqref{eqn:rough_delay}. From Proposition \ref{prop:linearization_eq}, the derivative of the solution at $ \xi $ in the direction of $ \tilde{\xi} $ satisfies the equation
 \begin{align}\label{equation}
  \begin{split}
Dy^{\xi}[\tilde{\xi}](t) - \tilde{\xi}_0 &= \int_{0}^{t}\big{[}\sigma_{x}(y^{\xi}_{\tau},\xi_{\tau -r})Dy^{\xi}[\tilde{\xi}](\tau)+\sigma_{y}(y^{\xi}_{\tau},\xi_{\tau-r})\tilde{\xi}_{\tau -r}\big{]}d\mathbf{X}_{\tau};\quad t \in [0,r] \\
Dy^{\xi}[\tilde{\xi}](t) &= \tilde{\xi}_t; \quad t \in [-r,0].
  \end{split}
\end{align}
Set $ Z_{\tau}=Dy^{\xi}[\tilde{\xi}](\tau) $ and $ \eta_{t}=\sigma_{x}(y^{\xi}_{t},\xi_{t -r})Z_{t}+\sigma_{y}(y^{\xi}_{t},\xi_{t-r})\tilde{\xi}_{t -r} $. Using a Taylor expansion and the definition of controlled paths, we obtain
\begin{align}\label{decompositon1}
\begin{split}
\eta_{s,t} &= \sigma_{x}(y_{s}^{\xi},\xi_{s-r})Z^{\prime}_{s} X_{s,t} + \big{[} \sigma_{x^2}(y_{s}^{\xi},\xi_{s-r})(y^{\xi})^{\prime}_{s} X_{s,t} + \sigma_{x,y} (y_{s}^{\xi},\xi_{s-r}) \xi^{\prime}_{s-r}X_{s-r,t-r}\big{]}Z_{s} \\
&\quad + \sigma_{y}(y^{\xi}_{s},\xi_{s-r})(\tilde{\xi})^{\prime}_{s-r}X_{s-r,t-r}+\big{[} \sigma_{x,y}(y_{s}^{\xi},\xi_{s-r})(y^{\xi})^{\prime}_{s}X_{s,t} + \sigma_{y^2}(y_{s}^{\xi},\xi_{s-r})\xi^{\prime}_{s-r}X_{s-r,t-r}\big{]}\tilde{\xi}_{s-r} \\
&\quad +\eta^{\#}_{s,t}
\end{split}
\end{align}
where 
\begin{align}\label{decomposition2}
\begin{split}
    \eta_{s,t}^{\#} &= \big{[}\sigma_{x}(y_{t}^{\xi},\xi_{t-r})-\sigma_{x}(y_{s}^{\xi},\xi_{s-r})\big{]}Z_{s,t}+\big{[}\sigma_{y}(y_{t}^{\xi},\xi_{t-r})-\sigma_{y}(y_{s}^{\xi},\xi_{s-r})\big{]}\tilde{\xi}_{s-r,t-r}+\sigma_{x}(y_{s}^{\xi},\xi_{s-r})Z^{\#}_{s,t}\\
&\quad + \sigma_{y}(y_{s}^{\xi},\xi_{s-r})\tilde{\xi}^{\#}_{s,t} + \big{[}\sigma_{x^2}(y_{s}^{\xi},\xi_{s-r})(y^{\xi})^{\#}_{s,t} + \sigma_{x,y}(y_{\xi}^{s},\xi_{s-r})\xi^{\#}_{s-r,t-r}\big{]}Z_{s} + 
\big{[} \sigma_{x,y}(y_{s}^{\xi},\xi_{s-r})(y^{\xi})^{\#}_{s,t}\\
&\quad + \sigma_{y^2}(y_{\xi}^{s},\xi_{s-r})\xi^{\#}_{s-r,t-r}\big{]}\tilde{\xi}_{s-r} + \int_{0}^{1}(1-z)\frac{d^{2}}{dz^{2}}\bigg{[}\sigma_{x}\big{(}z y^{\xi}_{t} + (1-z)y_{s}^{\xi}, z\xi_{t-r} + (1-z)\xi_{s-r}\big{)}\bigg{]}Z_{s}\, dz \\
&\quad + \int_{0}^{1}(1-z)\frac{d^{2}}{dz^{2}}\bigg{[}\sigma_{y}\big{(}z y^{\xi}_{t} + (1-z)y_{s}^{\xi}, z\xi_{t-r} + (1-z)\xi_{s-r}\big{)}\bigg{]}\tilde{\xi}_{s-r}\, dz
\end{split}
\end{align}
and $Z_{s,t} = Z'_s X_{s,t} + Z^{\#}_{s,t}$ with
\begin{align*}
 Z'_s = \sigma_{x}(y^{\xi}_{s},\xi_{s -r})Dy^{\xi}[\tilde{\xi}](s)+\sigma_{y}(y^{\xi}_{s},\xi_{s-r})\tilde{\xi}_{s -r}.
\end{align*}
By \cite[Theorem 1.5]{GRS19}, for a delayed controlled path with decomposition $\eta_{s,t} = \eta_{s}^{1}X_{s,t}+\eta^{2}_{s}X_{s-r,t-r}+\eta^{\#}_{s,t} $, we have for any $w_0 \in W$
\begin{align}\label{Bound}
\begin{split}
&\big{\Vert} w_{0}+\int_{a}^{\cdot}\eta_{\tau}\, d\mathbf{X}_{\tau}\big{\Vert}_{\mathscr{D}_{X}^{\beta}[a,b]}\leqslant\vert w_{0}\vert + \vert\eta_{a}\vert + \Vert\eta\Vert_{\beta;[a,b]} + \sup_{a \leq s < t \leq b} \frac{ \left| \int_s^t \eta_{\tau}\, d\mathbf{X}_{\tau} - \eta_s X_{s,t}\right|}{|t-s|^{2\beta}}
\end{split}
\end{align}
and
\begin{align*}
 \sup_{a \leq s < t \leq b} \frac{ \left| \int_s^t \eta_{\tau}\, d\mathbf{X}_{\tau} - \eta_s X_{s,t}\right|}{|t-s|^{2\beta}} \leq\ &\Vert\eta^{1}\Vert_{\infty;[a,b]}\Vert\mathbb{X}\Vert_{\gamma;[a,b]}(b-a)^{2(\gamma-\beta)} + \Vert\eta^{2}\Vert_{\infty;[a,b]}\Vert\mathbb{X}(-r)\Vert_{\gamma;[a,b]}(b-a)^{2(\gamma-\beta)} \\
 &\quad + M\bigg{[}\Vert\eta^{\#}\Vert_{2\beta;[a,b]}\Vert X\Vert_{\gamma;[a,b]}(b-a)^{\gamma} + \Vert\eta^{1}\Vert_{\beta;[a,b]}\Vert\mathbb{X}\Vert_{2\gamma;[a,b]}(b-a)^{2\gamma-\beta} \\
 &\qquad + \Vert\eta^{2}\Vert_{\beta;[a,b]}\Vert\mathbb{X}(-r)\Vert_{2\gamma;[a,b]}(b-a)^{2\gamma-\beta} \bigg{]}
\end{align*}
for a general constant $M$.
%
%
%
Thanks to our assumptions on $\sigma$, \eqref{decompositon1}, \eqref{decomposition2} and Theorem \ref{thm:bound_rdde},
\begin{align*}
\max \bigg{\lbrace}\Vert\eta^{1}\Vert_{\beta;[a,b]},\Vert\eta^{2}\Vert_{\beta;[a,b]}, \Vert\eta^{\#}\Vert_{2\beta ; [a,b]} \bigg{\rbrace}\leqslant\big{[}\Vert Z\Vert_{\mathscr{D}_{X}^{\beta}[0,r]}+\Vert\tilde{\xi}\Vert_{\mathscr{D}_{X}^{\beta}[-r,0]}\big{]}Q_{1}(A,\Vert\xi\Vert_{\mathscr{D}_{X}^{\beta}[-r,0]})
\end{align*}
and
\begin{align*}
\Vert\eta\Vert_{\beta;[a,b]}\leqslant (b-a)^{\gamma-\beta}\big{[}\Vert Z\Vert_{\mathscr{D}_{X}^{\beta}[0,r]}+\Vert\tilde{\xi}\Vert_{\mathscr{D}_{X}^{\beta}[-r,0]}\big{]} Q_{1}(A,\Vert\xi\Vert_{\mathscr{D}_{X}^{\beta}[-r,0]})
\end{align*} 
for a polynomial $ Q_{1} $. Using this bound in \eqref{equation}, we see that for $0 \leq (n-1)\tau < n \tau \leq r$
\begin{align*}
\Vert Z\Vert_{\mathscr{D}_{X}^{\beta}[(n-1)\tau ,n\tau]} &\leqslant \tau^{\gamma -\beta} \Vert Z\Vert_{\mathscr{D}_{X}^{\beta}[(n-1)\tau ,n\tau]}Q_{2}(A,\Vert\xi\Vert_{\mathscr{D}_{X}^{\beta}[-r,0]})\\
&\quad + \Vert\tilde{\xi}\Vert_{\mathscr{D}_{X}^{\beta}[-r,0]}Q_{2}(A,\Vert\xi\Vert_{\mathscr{D}_{X}^{\beta}[-r,0]})+
\vert Z_{(n-1)\tau}\vert+\vert Z^{\prime}_{(n-1)\tau}\vert
\end{align*}
for a polynomial $Q_2$. Choosing $ \tau $ such that $\tau^{\gamma-\beta}Q_{2}(A,\Vert\xi\Vert_{\mathscr{D}_{X}^{\beta}[-r,0]}) \leq \frac{1}{2} $, we can proceed as in the proof of \cite[Theorem 1.11]{GRS19} to conclude the claimed bound for \eqref{eqn:rough_delay}. The proof for \eqref{drift} is similar.
\end{proof}

\begin{proof}[Proof of Theorem \ref{TTH}]
We will prove the statement for the solution to \eqref{eqn:rough_delay} only, the proof for \eqref{drift} is similar. Set $ Z^{1}_{\tau} := Dy^{\xi}[\eta](\tau) $ and $ Z^{2}_{\tau} := Dy^{\tilde{\xi}}[\eta](\tau)$. From Proposition \ref{prop:linearization_eq},
 \begin{align}\label{A}
 [Z^{1}_{s,t}-Z^{2}_{s,t}] = \int_{s}^{t}\big{[}\sigma_{x}(y_{\tau}^{\xi},\xi_{\tau-r})[Z^{1}_{\tau}-Z^{2}_{\tau}] + B_{\tau}\big{]}\, d\mathbf{X}_{\tau}
 \end{align}
 where 
 \begin{align*}
   B_{\tau} &:= [\sigma_{x}(y_{\tau}^{\xi},\xi_{\tau -r}) - \sigma_{x}(y^{\tilde{\xi}}_{\tau},\tilde{\xi}_{\tau -r})]Z^{2}_{\tau} + [\sigma_{y}(y_{\tau}^{\xi},\xi_{\tau -r})-\sigma_{y}(y^{\tilde{\xi}}_{\tau},\tilde{\xi}_{\tau -r})]\eta_{\tau -r} \\
   &=: B^{1}_{\tau} + B^{2}_{\tau}.
 \end{align*}
 Set $ C_{\tau} := [\sigma_{x}(y_{\tau}^{\xi},\xi_{\tau -r})-\sigma_{x}(y^{\tilde{\xi}}_{\tau},\tilde{\xi}_{\tau -r})] $. By a Taylor expansion,
 \begin{align*}
 C_{s,t} &= \big{[} \sigma_{x^2}(y_{s}^{\xi},\xi_{s-r})(y^{\xi})^{\prime}_{s} - \sigma_{x^2}(y_{s}^{\tilde{\xi}},\tilde{\xi}_{s-r})(y^{\tilde{\xi}})^{\prime}_{s}\big{]}X_{s,t} \\
 &\quad + \big{[} \sigma_{x,y}(y_{t}^{\xi},\xi_{t-r}){\xi}^{\prime}_{s-r} - \sigma_{x,y}(y_{s}^{\tilde{\xi}},\tilde{\xi}_{t-r}){\tilde{\xi}}^{\prime}_{s-r}\big{]}X_{s-r,t-r} \\
 &\quad + \big{[} \sigma_{x^2}(y_{s}^{\xi},\xi_{s-r})(y^{\xi})^{\#}_{s,t} - \sigma_{x^2}(y_{s}^{\tilde{\xi}},\tilde{\xi}_{s-r})(y^{\tilde{\xi}})^{\#}_{s,t}\big{]} + \big{[} \sigma_{x,y}(y_{s}^{\xi},\xi_{s-r}){\xi}^{\#}_{s,t} - \sigma_{x,y}(y_{s}^{\tilde{\xi}},\tilde{\xi}_{s-r}){\tilde{\xi}}^{\#}_{s,t}\big{]}\\
 &\quad + \int_{0}^{1}(1-z)\frac{d^{2}}{dz^{2}}\bigg{[}\sigma_{x}\big{(} z y^{\xi}_{t} + (1-z) y_{s}^{\xi},x\xi_{t-r} + (1-z)\xi_{s-r}\big{)} \\
 &\quad \quad -\sigma_{x}\big{(} z y^{\tilde{\xi}}_{t} 
 + (1-z) y_{s}^{\tilde{\xi}},z\tilde{\xi}_{t-r} + (1-z)\tilde{\xi}_{s-r}\big{)}\bigg{]}\, dz \\
 &=: C^{1}_{s}X_{s,t} + C^{2}_{s,t} X_{s-r,t-r} + C^{\#}_{s,t}.
 \end{align*}
Note that
\begin{align*}
C^{1}_{s,t} &= \int_{0}^{1}\frac{d}{dz} \bigg{[} \sigma_{x^2}\big{(} z y_{t}^{\xi} + (1-z) y_{t}^{\tilde{\xi}}, z \xi_{t-r} + (1-z) \tilde{\xi}_{t-r}\big{)}\\
&\quad \quad - \sigma_{x^2} \big{(} z y_{s}^{\xi} + (1-z) y_{s}^{\tilde{\xi}},z \xi_{s-r} + (1-z) \tilde{\xi}_{s-r}\big{)}\bigg{]}(y^{\xi})^{\prime}_{t}\, dz \\
&\quad + \sigma_{x^2}(y_{t}^{\tilde{\xi}},\tilde{\xi}_{t-r})\big{[}(y^{\xi})^{\prime}_{s,t} - (y^{\tilde{\xi}})^{\prime}_{s,t}\big{]}\\
&\quad + \int_{0}^{1} \frac{d}{dz} \bigg{[} \sigma_{x^2}\big{(} z y_{s}^{\xi} + (1-z) y_{s}^{\tilde{\xi}}, z\xi_{s-r} + (1-z) \tilde{\xi}_{s-r}\big{)}\bigg{]}(y^{\xi})^{\prime}_{s,t}\ dz \\
&\quad + \big{[}\sigma_{x^2}(y_{t}^{\tilde{\xi}},\tilde{\xi}_{t-r}) - \sigma_{x^2} (y_{s}^{\tilde{\xi}},\tilde{\xi}_{s-r})\big{]}\big{[}(y^{\xi})^{\prime}_{s} - (y^{\tilde{\xi}})^{\prime}_{s}\big{]}.
\end{align*}
From Theorem \ref{thm:bound_rdde}, Theorem \ref{THM} and our assumptions on $\sigma$,
\begin{align}
    \max \big{\lbrace} \Vert C^{1}\Vert_{\beta;[0,r]},\Vert C^{1}\Vert_{\infty;[0,r]}\big{\rbrace}\leqslant\Vert\xi-\tilde{\xi}\Vert_{\mathscr{D}_{X}^{\beta}[-r,0]}\exp\big{[}P_{1}(A,\Vert\xi\Vert_{\mathscr{D}_{X}^{\beta}[-r,0]},\Vert\xi-\tilde{\xi}\Vert_{\mathscr{D}_{X}^{\beta}[-r,0]})\big{]}
\end{align}
where $ P_{1} $ is a polynomial. Note that 
\begin{align*}
    B^{1}_{s,t}=[C^{1}_{s}X_{s,t}]Z_{s}^{2}+C_{s}[(Z^{2})^{\prime}_{s}X_{s,t}]+[C^{2}_{s}X_{s-r,t-r}]Z_{s}^{2}+C^{\#}_{s,t}Z^{2}_{s}+C_{s}(Z^{2})^{\#}_{s,t}+C_{s,t}Z^{2}_{s,t}.
\end{align*}
Setting $D_{\tau} = \sigma_{y}(y_{\tau}^{\xi},\xi_{\tau -r})-\sigma_{y}(y^{\tilde{\xi}}_{\tau},\tilde{\xi}_{\tau -r})$, we have the same decomposition for $B^{2}_{\tau} = D_{\tau}\eta_{\tau -r}$ with similar estimates. Using  \cite[Theorem 1.5]{GRS19}, we can deduce that there exists a polynomial $ P_{2} $ such that for every $ [a,b]\in[0,r] $,
\begin{align}\label{B}
\big{\Vert}\int_{a}^{.}B_{\tau}\, d\mathbf{X}_{\tau}\big{\Vert}_{\mathscr{D}_{X}^{\beta}[a,b]}\leqslant\Vert\xi - \tilde{\xi} \Vert_{\mathscr{D}_{X}^{\beta}[-r,0]} \Vert\eta\Vert_{\mathscr{D}_{X}^{\beta}[-r,0]} \exp\big{[}P_{2}(A,\Vert\xi\Vert_{\mathscr{D}_{X}^{\beta}[-r,0]},\Vert\xi-\tilde{\xi}\Vert_{\mathscr{D}_{X}^{\beta}[-r,0]})\big{]}.
\end{align}
By a similar argument as in the proof of Theorem \ref{THM},
\begin{align}\label{C}
    \big{\Vert}\int_{a}^{.}\sigma_{x}(y_{\tau}^{\xi},\xi_{\tau-r})[Z^{1}_{\tau}-Z^{2}_{\tau}]\, d\mathbf{X}_{\tau}\big{\Vert}_{\mathscr{D}_{X}^{\beta}[a,b]}\leqslant (b-a)^{\gamma-\beta}\Vert Z^{2}-Z^{1}\Vert_{\mathscr{D}_{X}^{\beta}[a,b]} P_3(A,\Vert\xi\Vert_{\mathscr{D}_{X}^{\beta}[-r,0]})
\end{align}
for a polynomial $P_3$. Finally from \eqref{A}, \eqref{B} and \eqref{C}, we obtain for $0 \leq (n-1)\tau <n\tau \leq r$
\begin{align*}
\Vert Z^{1}-Z^{2}\Vert_{\mathscr{D}_{X}^{\beta}[(n-1)\tau,n\tau]} &\leqslant \tau^{\gamma -\beta}\Vert Z^{1}-Z^{2}\Vert_{\mathscr{D}_{X}^{\beta}(n-1)\tau,n\tau]} P_3 (A,\Vert\xi\Vert_{\mathscr{D}_{X}^{\beta}[-r,0]}) \\
&\quad + \Vert\xi -\tilde{\xi} \Vert_{\mathscr{D}_{X}^{\beta}[-r,0]}  \Vert\eta\Vert_{\mathscr{D}_{X}^{\beta}[-r,0]} \exp\big{[}P_{2}(A,\Vert\xi\Vert_{\mathscr{D}_{X}^{\beta}[-r,0]},\Vert\xi - \tilde{\xi}\Vert_{\mathscr{D}_{X}^{\beta}[-r,0]})\big{]}\\
&\quad +\vert[Z^{1}-Z^{2}]_{(n-1)\tau}\vert + \vert[Z^{1}-Z^{2}]_{(n-1)\tau}^{\prime}\vert
\end{align*}
Choosing $\tau$ such that $ \tau^{\gamma-\beta}\tilde{Q}(A,\Vert\xi\Vert_{\mathscr{D}_{X}^{\beta}(n-1)\tau,n\tau]}) \leq \frac{1}{2}$, we can again proceed as in the proof of \cite[Theorem 1.11]{GRS19} to obtain the result.
\end{proof}

\subsection*{Acknowledgements}
\label{sec:acknowledgements}

MGV acknowledges a scholarship from the Berlin Mathematical School (BMS). SR is supported by the MATH+ project AA4-2 \textit{Optimal control in energy markets using rough analysis and deep networks}. Work on this paper was started while SR was supported by the DFG via Research Unit FOR 2402. Both authors would like to thank M.~Scheutzow for valuable discussions and comments during the preparation of the manuscript.

\bibliographystyle{alpha}
\bibliography{refs}

\end{document}